\renewcommand{\d}{\mathrm{d}}
\newcommand{\D}{\mathrm{D}}
\newcommand{\e}{\mathrm{e}}
\newtheorem{Thm}{Theorem}[section]
\newtheorem{Lem}[Thm]{Lemma}
\newtheorem{Prop}[Thm]{Proposition}
\newtheorem{Cor}[Thm]{Corollary}
\newtheorem{Rem}[Thm]{Remark}
\newtheorem{Def}[Thm]{Definition}
\newtheorem{Con}[Thm]{Conjecture}
\newtheorem{Ex}[Thm]{Example}
\newtheorem*{MainThm}{Main Theorem}
\newtheoremstyle{named}{}{}{\itshape}{}{\bfseries}{.}{.5em}{#1 #3}
\theoremstyle{named}
\def\R{\mathbb{R}}
\def\Q{\mathbb{Q}}
\def\N{\mathbb{N}}
\def\C{\mathbb{C}}
\def\Z{\mathbb{Z}}
\def\fn{\mathfrak{n}}
\def\fb{\mathfrak{b}}
\def\g{\mathfrak{g}}
\def\fh{\mathfrak{h}}
\def\sl{\mathfrak{sl}}
\def\cI{\mathcal{I}}
\def\cP{\mathcal{P}}
\def\cR{\mathcal{R}}
\def\cS{\mathcal{S}}
\def\cU{\mathcal{U}}
\def\cV{\mathcal{V}}
\def\cW{\mathcal{W}}
\def\a{\alpha}
\def\b{\beta}
\def\D{\Delta}
\def\d{\delta}
\def\e{\epsilon}
\def\l{\lambda}
\def\L{\Lambda}
\def\s{\sigma}
\def\w{\omega}
\def\bC{\textbf{C}}
\def\be{\textbf{e}}
\def\bf{\textbf{f}}
\def\bi{\textbf{i}}
\def\bo{\textbf{o}}
\def\bQ{\textbf{Q}}
\def\=>{\Longrightarrow}
\def\iff{\Longleftrightarrow}
\def\to{\longrightarrow}
\def\ox{\otimes}
\def\o+{\oplus}
\def\bo+{\bigoplus}
\def\x{\times}
\def\<{\langle}
\def\>{\rangle}
\def\({\left(}
\def\){\right)}
\def\oo{\infty}
\def\^{\wedge}
\def\+{\dagger}
\def\inv{^{-1}}
\def\half{\frac{1}{2}}
\def\dd[#1,#2]{\frac{d#1}{d#2}}
\def\del[#1,#2]{\frac{\partial #1}{\partial #2}}
\def\over[#1]{\overline{#1}}
\def\vec[#1]{\overrightarrow{#1}}
\def\tab{\;\;\;\;\;\;}
\newcommand{\til}[1]{\widetilde{#1}}
\newcommand{\what}[1]{\widehat{#1}}
\newcommand{\case}[2][cccccccccccccccccccccccccccccccccccccccccc]{\left\{\begin{array}{#1}#2 \\ \end{array}\right.}
\newcommand{\Eq}[1]{\begin{align}#1\end{align}}
\newcommand{\Eqn}[1]{\begin{align*}#1\end{align*}}
\begin{document}
\title{Positive Casimir and Central Characters of \\Split Real Quantum Groups}

\author{  Ivan C.H. Ip\footnote{
         	Kavli Institute for the Physics and Mathematics of the Universe (WPI), 
		The University of Tokyo, 
		Kashiwa, Chiba 
		277-8583, Japan
		\newline
		Email: ivan.ip@ipmu.jp
          }
}

\date{\today}

\numberwithin{equation}{section}

\maketitle

\textit{Dedicated to the memory of my grandfather N. Y. Wong (1926-2015)}

\bigskip

\begin{abstract}
We describe the generalized Casimir operators and their actions on the positive representations $\cP_\l$ of the modular double of split real quantum groups $\cU_{q\til{q}}(\g_\R)$. We introduce the notion of virtual highest and lowest weights, and show that the central characters admit positive values for all parameters $\l$. We show that their image defines a semi-algebraic region bounded by real points of the discriminant variety independent of $q$, and we discuss explicit examples in the lower rank cases.
\end{abstract}

{\small {\textbf{Keywords.} Modular double, quantum group, Casimir operator, central character, discriminant variety}

{\small {\textbf {2010 Mathematics Subject Classification.} Primary 81R50, Secondary 55R80}}

\newpage
\tableofcontents
\section{Introduction}\label{sec:intro}
The notion of the \emph{positive principal series representations}, or simply \emph{positive representations}, was introduced in \cite{FI} as a new research program devoted to the representation theory of split real quantum groups $\cU_{q\til{q}}(\g_\R)$. It uses the concept of modular double for quantum groups \cite{Fa1, Fa2}, and has been studied for $\cU_{q\til{q}}(\sl(2,\R))$ by Teschner \textit{et al.} \cite{BT, PT1, PT2}. Explicit construction of the positive representations $\cP_\l$ of  $\cU_{q\til{q}}(\g_\R)$ associated to a simple Lie algebra $\g$ has been obtained for the simply-laced case in \cite{Ip2} and non-simply-laced case in \cite{Ip3}, where the generators of the quantum groups are realized by positive essentially self-adjoint operators. Furthermore, since the generators are represented by positive operators, we can take real powers by means of functional calculus, and we obtained the so-called \emph{transcendental relations} of the (rescaled) generators (cf. \eqref{smallef}):
\Eq{\til{\be_i}=\be_i^{\frac{1}{b_i^2}},\tab \til{\bf_i}=\bf_i^{\frac{1}{b_i^2}}, \tab \til{K_i}=K_i^{\frac{1}{b_i^2}},\tab i=1,..., n}
where $n$=rank $\g$, giving the self-duality between different parts of the modular double, while in the non-simply-laced case, new explicit analytic relations between the quantum group and its Langlands dual have been observed \cite{Ip3}.

One important open problem is the study of the tensor product decompositions of the positive representation $\cP_\l\ox \cP_\mu$. It is believed that the positive representations are closed under taking tensor product, which, together with the existence of the universal $R$ operator \cite{Ip4}, lead to the construction of new classes of \emph{braided tensor category} and hence to further applications parallel to those from the representation theory of compact quantum groups. In a recent work \cite{Ip6}, we studied the tensor product decomposition restricted to the positive Borel part $\cU_{q\til{q}}(\fb_\R)$ which is defined in the $C^*$-algebraic setting by certain multiplier Hopf-* algebra, and we show that the decomposition is closely related to the so-called quantum higher Teichm\"{u}ller theory \cite{FG1,FG2}, and provides evidences for the decomposition of the full quantum group in general. However, taking the full quantum group into account is much more difficult. In the case of $\cU_{q\til{q}}(\sl(2,\R))$, this was accomplished in \cite{NT, PT2} by means of the decomposition of the Casimir operator
\Eq{\bC = FE+\frac{qK+q\inv K\inv}{(q-q\inv)^2}}
which is central in $\cU_{q\til{q}}(\sl(2,\R))$ and its spectral decomposition in $\cP_\l\ox\cP_\mu$ uniquely determines its decomposition into positive representations. Therefore we are interested in the higher rank situation by looking at the appropriate central elements and study its spectral decomposition. 

One important object to study in any representation theory is the notion of \emph{central characters}, i.e. the action of the center of the algebra on irreducible representations, which necessarily act as scalars. It is well-known that the center of $\cU_q(\g)$ is generated by rank $\g$ elements which we called the generalized Casimir operators. In this paper, we will take the \emph{same} set of generators of the center of the compact $\cU_q(\g)$ as our generalized Casimir operators for the split real quantum group $\cU_{q\til{q}}(\g_\R)$. We calculate the eigenvalues of the generalized Casimir operators acting on the positive representation $\cP_\l$, which is irreducible. Their actions are well-defined on the core of $\cP_\l$, which extends to the domain of definition of the unbounded operators represented by the generators $\{\be_i, \bf_i, K_i\}$ of the split real quantum group.

There are many ways to choose the generating set of the center as the generalized Casimir operators \cite{Et, RTF, ZGB}. In this paper, we will use a modified version from \cite{KS} and define 
\Eq{\bC_k = Tr|_{V_k}^q(RR_{21}),\tab k=1,...,n}
where $V_k$ is the fundamental representations of the \emph{compact} quantum group $\cU_q(\g)$, $R$ is the universal $R$ matrix, and $Tr^q$ is the quantum trace taken over the representation $V_i$ of $\cU_q(\g)$. However, in our construction, $Tr^q$ is twisted instead by a central element $u$ that is related to the antipode of the \emph{modular double} $\cU_{q\til{q}}(\g_\R)$. These operators $\bC_k$ are then acted on the positive representations $\cP_\l$ as certain scalars. Our main results is the following (Theorem \ref{main})

\begin{MainThm} The operators $\bC_k$ acts on $\cP_\l$ as scalars $C_k(\l)$ which are positive, and bounded below by $\dim V_k$ for every parameters $\l=(\l_1,...,\l_n)\in\R_{\geq 0}^n$. Furthermore, they can be given by the Weyl character formula.
\end{MainThm}

We also note that unlike the case of compact quantum groups, by rescaling the parameters $\l_i$, the eigenvalues $C_k(\l)$ are \emph{independent} on $q$. Together with the conjecture that the positive representations should be closed under tensor products \cite{Ip5}, this implies

\begin{Con} The coproduct $\D(\bC_k)$ of the generalized Casimir operators $\bC_k$ acting on $\cP_\l\ox\cP_\mu$ is a positive operator, with spectrum bounded below by $\dim V_k$ for all $k=1,..,n$, and they can be simultaneously diagonalized.
\end{Con}

Therefore we will call these generalized Casimir operators the \emph{positive Casimirs} of the modular double of split real quantum groups $\cU_{q\til{q}}(\g_\R)$. In particular, we believe that just like the case in $\cU_{q\til{q}}(\sl(2,\R))$, the decomposition of these operators in the tensor product will give explicitly the decomposition of $\cP_\l\ox\cP_\mu$.

In the classical case, one calculate the eigenvalues of $\bC_k$ by acting on the highest weight vector, which kills the non-Cartan part, giving the (quantum) Harish-Chandra homomorphism projecting onto the Cartan part, where the eigenvalues can be calculated by the information from the highest weight. In the case of positive representation, the situation is fundamentally different because the representation $\cP_\l$ is infinite dimensional, and we do not have a highest or lowest weight vector. In this paper, we introduced the notion of \emph{virtual highest/lowest weight} to deal with this problem, which follows from a new combinatorial description of the positive roots (Proposition \ref{root-com})
\Eq{
s_{i_1}s_{i_2}...s_{i_t}(\a_k) = \a_k-\sum_{j=1}^t a_{i_jk}s_{i_1}s_{i_2}...s_{i_{j-1}}(\a_{i_j}),
}
where $s_i$ are the root reflections, $\a_k$ are the positive simple roots, and $a_{ij}$ are the Cartan matrix elements.

We observed two aspects that might relate the results of the split real case to that of the compact case. Using the virtual highest and lowest weights, we found that each component of these weights look like certain analytic continuation to the complex line $\frac{Q}{2}+\bi \R_{>0}^+$. On the other hand, we also note that the calculations of the action of the generalized Casimir operators depend solely on the information provided by the finite dimensional fundamental representations of the compact quantum groups. This interplay between (finite dimensional) representations $V_k$ of the compact group, and its action on the (infinite dimension) positive representations $\cP_\l$ of the split real quantum group is interesting on its own. We believe that these concepts can be carried further to provide more analogous results from the compact quantum groups to their split real version, including the tensor product decomposition and categorification, by means of certain analytic continuations. These will be explored further in separate publications.

Finally, to study the spectral decomposition of the tensor product of Casimir operators, one is interested in the region $\cR$ defined by the eigenvalues of these operators. This is the region defined by the image of
 \Eq{\Phi:\R_{\geq 0}^n\simeq &\to \R_{\geq 0}^n\\
(\l_1,...,\l_n) &\mapsto (C_1(\l),...,C_n(\l))\nonumber,
}
and it turns out that the image is a semi-algebraic set bounded by the so-called \emph{discriminant variety} (or \emph{divisor}) \cite{Gr, Sai1, Sai2}, which is studied extensively from a totally different point of view in the theory of \emph{primitive forms}. This provides a homeomorphism between the positive Weyl chamber with the image $\cR$, and the origin is mapped to a cusp-like singularity, given at $(d_1,...,d_n)$ where $d_k$ is the dimension of the fundamental representations $V_k$. Furthermore, these images are again \emph{independent} on the choice of the quantum parameter $q$. We will provide the descriptions of these regions in the rank 2 and 3 cases, together with some explicit expressions of the generalized Casimir operators themselves.

The paper is organized as follows. In Section \ref{sec:prelim}, we fixed the notation for roots systems and the Drinfeld-Jimbo quantum groups, and recall the definition of positive representations of split real quantum groups as well as the construction of the universal $R$ matrix. In Section \ref{sec:sl2}, we recall the situation in $\cU_{q\til{q}}(sl(2,\R))$ due to \cite{BT, PT2}. In Section \ref{sec:genCasimir} we discuss the construction of the generalized Casimir operators. In Section \ref{sec:virtual}, we introduce the notion of virtual highest and lowest weight in order to calculate the central characters of the Casimirs in Section \ref{sec:cc}. Finally in Section \ref{sec:disc} we describe the regions defined by the central characters of the generalized Casimir operators, and discuss in Section \ref{sec:lowrank} some explicit examples in the low rank cases. In the appendix we recall the dimensions of the fundamental representations $V_k$ and also provide the graphs of $\cR$ in lower rank cases.

\textbf{Acknowledgment.} I would like to thank Kyoji Saito and Todor Milanov for stimulating discussions about the discriminant varieties and Weyl invariant polynomials. This work was supported by World Premier International Research Center Initiative (WPI Initiative), MEXT, Japan.
\section{Preliminaries}\label{sec:prelim}
Throughout the paper, we will fix once and for all $q=e^{\pi \bi b^2}$ with $\bi =\sqrt{-1}$, ${0<b^2<1}$ and $b^2\in\R\setminus\Q$. We also denote by $Q=b+b\inv$. Also let $I=\{1,2,...,n\}$ denotes the set of nodes of the Dynkin diagram of the simple Lie algebra $\g$ where $n=rank(\g)$.
\subsection{Notation for roots and weights}\label{sec:prelim:roots}
\begin{Def} Let $(-,-)$ be the $W$-invariant inner product of the root lattice where $W$ is the Weyl group of the Cartan datum. Let $\a_i\in\D^+$, $i\in I$ be the positive simple roots, and we define
\begin{eqnarray}
a_{ij}:=\frac{2(\a_i,\a_j)}{(\a_i,\a_i)},
\end{eqnarray}
where $A=(a_{ij})$ is the Cartan matrix. 
\end{Def}
\begin{Def}\label{rho}
We denote by $H_i\in\fh$ the coroot corresponding to the positive simple root $\a_i$. The fundamental weights $w_i\in\fh_\R^*$ are dual to the simple coroots and are given by
\Eq{w_i := \sum (A\inv)_{ji} \a_j,}
such that $(w_i, \a_j^\vee)=\d_{ij}$, where the coroot $H_j:=\a_j^\vee := \frac{2\a_j}{(\a_j,\a_j)}$ in the root lattice. 

Similarly, we denote the corresponding fundamental coweights $W_i\in\fh_\R$ in the real span of $\fh$ dual to the simple roots by 
\Eq{W_i := \sum (A\inv)_{ij}H_j.}
\end{Def}
\begin{Def}
We let 
\Eq{\rho := \frac{1}{2}\sum_{\a\in\D^+}\a = \sum_i w_i=\sum_i d_i W_i} be the half sum of positive roots, or equivalently, the sum of fundamental weights, or the rescaled sum of fundamental coweights, where $d_i = \frac{(\a_i,\a_i)}{2}$.
\end{Def}

\begin{Def} For $(\l_1,...,\l_n)\in\R^n$ and $b_j\in\R$, let 
\Eq{\vec[\l_b]:=\sum_{j=1}^n \l_jb_jW_j\in \fh_\R\label{lb}}
be a vector in the real span of $\fh$.
\end{Def}
\begin{Prop} The Weyl group action on the positive simple roots are given by 
\Eq{s_i\cdot \a_j = \a_j - a_{ij}\a_i,}
where $s_i\in W$ is the simple reflection corresponding to the root $\a_i$.
Then 
\Eq{s_i\cdot W_j = W_j - \d_{ij}\a_j^\vee = W_j - \d_{ij}\sum_{k=1}^na_{jk}W_k,} and the Weyl group action of $\vec[\l_b]$ is given by
\Eq{s_i\cdot \vec[\l_b] &= \sum_{j=1}^n \l_jb_jW_j-\l_ib_i\a_i^\vee\\
&=\sum_{j=1}^n (\l_jb_j-a_{ij}\l_ib_i)W_j.\label{lb-weyl}
}
\end{Prop}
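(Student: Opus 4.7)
The plan is to treat the three formulas in sequence. The first equation is immediate: applying the standard reflection $s_i(v)=v-\frac{2(v,\a_i)}{(\a_i,\a_i)}\a_i$ on $\fh_\R^*$ to $v=\a_j$ and invoking the definition $a_{ij}=2(\a_j,\a_i)/(\a_i,\a_i)$ (with the symmetry of $(-,-)$) produces $s_i\cdot\a_j=\a_j-a_{ij}\a_i$.

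For the second formula I would pass to the induced action of $W$ on $\fh_\R$. The requirement that the natural pairing $\a_k(H)$ between $\fh_\R^*$ and $\fh_\R$ be $W$-invariant forces this dual action to be
\[s_i\cdot H = H - \a_i(H)\,H_i,\]
as one checks by pairing both sides with each $\a_k$ and matching against the first formula. Applying this with $H=W_j$ and invoking the defining duality $\a_i(W_j)=\d_{ij}$ from Definition \ref{rho} gives $s_i\cdot W_j=W_j-\d_{ij}H_i$. Since the right-hand side vanishes unless $i=j$, one may freely relabel $\d_{ij}H_i=\d_{ij}H_j=\d_{ij}\a_j^\vee$, which is the first equality. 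For the second equality, invert the defining relation $W_i=\sum_j(A\inv)_{ij}H_j$ of the fundamental coweights: the inverse reads $H_j=\sum_k a_{jk}W_k$, and substituting this for $\a_j^\vee$ yields $s_i\cdot W_j=W_j-\d_{ij}\sum_k a_{jk}W_k$.

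The last formula then follows by $\R$-linearity of the Weyl action. From $\vec[\l_b]=\sum_j\l_jb_jW_j$, only the $j=i$ summand produces a correction under $s_i$, giving $s_i\cdot\vec[\l_b]=\sum_j\l_jb_jW_j-\l_ib_i\a_i^\vee$; rewriting $\a_i^\vee=H_i=\sum_k a_{ik}W_k$ and collecting the coefficient of each $W_j$ yields the stated $\sum_j(\l_jb_j-a_{ij}\l_ib_i)W_j$.

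I do not foresee any real obstacle here; the proposition is elementary once two orientations of the Cartan matrix are kept straight: the action on $\a_j$ absorbs $a_{ij}$ into the coefficient of $\a_i$, while the expansion of the coroot $H_j$ in fundamental coweights uses the same Cartan matrix via the inverse of $W_i=\sum_j(A\inv)_{ij}H_j$, namely $H_j=\sum_k a_{jk}W_k$. With these bookkeeping conventions and the duality $\a_i(W_j)=\d_{ij}$ in hand, each of the three equations reduces to a one-line substitution.
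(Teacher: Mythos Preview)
Your argument is correct. The paper does not actually supply a proof of this proposition: it sits in the preliminaries (Section~\ref{sec:prelim:roots}) and is recorded as a standard fact about root systems, with no proof environment following it. Your derivation fills in the details in the expected way---reflection formula on $\fh_\R^*$, dual action on $\fh_\R$ via invariance of the pairing, duality $\a_i(W_j)=\d_{ij}$, and inversion of $W_i=\sum_j(A^{-1})_{ij}H_j$---and each step is sound, so there is nothing to compare against and nothing to correct.
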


\subsection{Definition of $\cU_q(\g)$ and $\cU_{q\til{q}}(\g_\R)$}\label{sec:prelim:Uqgr}
In order to fix the convention we use throughout the paper, we recall the definition of the Drinfeld-Jimbo quantum group $\cU_q(\g_\R)$ where $\g$ is a simple Lie algebra of general type \cite{D, J}.
\begin{Def} \label{qi} Let $\a_i$, $i\in I$ be the positive simple roots, and we define
\begin{eqnarray}
q_i:=q^{\frac{1}{2}(\a_i,\a_i)}:=e^{\pi \bi  b_i^2},
\end{eqnarray}
We will let $\a_1$ be the short root in type $B_n$ and the long root in type $C_n, F_4$ and $G_2$.

We choose 
\Eq{\frac{1}{2}(\a_i,\a_i):=\case{1&\mbox{$i$ is long root or in the simply-laced case,}\\\frac{1}{2}&\mbox{$i$ is short root in type $B,C,F$,}\\\frac{1}{3}&\mbox{$i$ is short root in type $G_2$,}}}and $(\a_i,\a_j)=-1$ when $i,j$ are adjacent in the Dynkin diagram.

Therefore in the case when $\g$ is of type $B_n$, $C_n$ and $F_4$, if we define $b_l=b$, and $b_s=\frac{b}{\sqrt{2}}$ we have the following normalization:
\begin{eqnarray}
\label{qiBCF}q_i=\case{e^{\pi \bi b_l^2}=q&\mbox{$i$ is long root,}\\e^{\pi \bi b_s^2} =q^{\frac{1}{2}}&\mbox{$i$ is short root.}}
\end{eqnarray}
In the case when $\g$ is of type $G_2$, we define $b_l=b$, and $b_s=\frac{b}{\sqrt{3}}$, and we have the following normalization:
\begin{eqnarray}
\label{qiG}q_i=\case{e^{\pi \bi b_l^2}=q&\mbox{$i$ is long root,}\\e^{\pi \bi b_s^2} =q^{\frac{1}{3}}&\mbox{$i$ is short root.}}
\end{eqnarray}
\end{Def}

\begin{Def} Let $A=(a_{ij})$ denotes the Cartan matrix. Then $\cU_q(\g)$ with $q=e^{\pi\bi b_l^2}$ is the algebra generated by $E_i$, $F_i$ and $K_i^{\pm1}$, $i\in I$ subject to the following relations:
\begin{eqnarray}
K_iE_j&=&q_i^{a_{ij}}E_jK_i,\\
K_iF_j&=&q_i^{-a_{ij}}F_jK_i,\\
{[E_i,F_j]} &=& \d_{ij}\frac{K_i-K_i\inv}{q_i-q_i\inv},
\end{eqnarray}
together with the Serre relations for $i\neq j$:
\begin{eqnarray}
\sum_{k=0}^{1-a_{ij}}(-1)^k\frac{[1-a_{ij}]_{q_i}!}{[1-a_{ij}-k]_{q_i}![k]_{q_i}!}E_i^{k}E_jE_i^{1-a_{ij}-k}&=&0,\label{SerreE}\\
\sum_{k=0}^{1-a_{ij}}(-1)^k\frac{[1-a_{ij}]_{q_i}!}{[1-a_{ij}-k]_{q_i}![k]_{q_i}!}F_i^{k}F_jF_i^{1-a_{ij}-k}&=&0,\label{SerreF}
\end{eqnarray}
where 
\Eq{[k]_q:=\frac{q^k-q^{-k}}{q-q\inv}.}
\end{Def}

\begin{Def}\label{abuse}
By abuse of notation, we will denote formally
\Eq{K_i =: q_i^{H_i}} where $H_i\in \fh$ is the simple coroot in the Cartan subalgebra. Furthermore, we allow fractional powers of $K_i$ by adjoining the elements $K_i^\frac{1}{c}$ into $\cU_q(\g)$, where $c=\det A\in\N$ is the determinant of the Cartan matrix, and again denoting the resulting algebra by $\cU_q(\g)$.
\end{Def}

We choose the Hopf algebra structure of $\cU_q(\g)$ to be given by
\Eq{
\D(E_i)=&1\ox E_i+E_i\ox K_i,\\
\D(F_i)=&K_i^{-1}\ox F_i+F_i\ox 1,\\
\D(K_i)=&K_i\ox K_i,\\
\e(E_i)=&\e(F_i)=0,\tab \e(K_i)=1,\\
S(E_i)=&-q_iE_i, \tab S(F_i)=-q_i\inv F_i,\tab S(K_i)=K_i\inv.
}

We define $\cU_q(\g_\R)$ to be the real form of $\cU_q(\g)$ induced by the star structure
\Eq{E_i^*=E_i,\tab F_i^*=F_i,\tab K_i^*=K_i.}
Finally, according to the results of \cite{Ip2,Ip3}, we define the modular double $\cU_{q\til{q}}(\g_\R)$ to be
\Eq{\cU_{q\til{q}}(\g_\R):=\cU_q(\g_\R)\ox \cU_{\til{q}}(\g_\R),&\tab \mbox{$\g$ is simply-laced,}\\
\cU_{q\til{q}}(\g_\R):=\cU_q(\g_\R)\ox \cU_{\til{q}}({}^L\g_\R),&\tab \mbox{otherwise,}}
where $\til{q}=e^{\pi \bi b_s^{-2}}$, and ${}^L\g_\R$ is the Langlands dual of $\g_\R$ obtained by interchanging the long roots and short roots of $\g_\R$.

\subsection{Positive representations of $\cU_{q\til{q}}(\g_\R)$}\label{sec:prelim:pos}
In \cite{FI, Ip2,Ip3}, a special class of representations for $\cU_{q\til{q}}(\g_\R)$, called the positive representations, is defined. The generators of the quantum groups are realized by positive essentially self-adjoint operators, and also satisfy the so-called \emph{transcendental relations}, relating the quantum group with its modular double counterpart. More precisely, we have
\begin{Thm} Let the rescaled generators be
\Eq{\be_i:=\left(\frac{\bi}{q_i-q_i\inv}\right)\inv E_i,\tab \bf_i:=\left(\frac{\bi}{q_i-q_i\inv}\right)\inv F_i.\label{smallef}}
Note that $$\left(\frac{\bi}{q_i-q_i\inv}\right)\inv=2\sin(\pi b_i^2)>0.$$
Then there exists a family of representations $\cP_{\l}$ of $\cU_{q\til{q}}(\g_\R)$ parametrized by the $\R_+$-span of the cone of positive weights $\l\in P_\R^+\subset \fh_\R^*$, or equivalently by $\l:=(\l_1,...,\l_n)\in \R_+^n$ where $n=rank(\g)$, such that 
\begin{itemize}
\item The generators $\be_i,\bf_i,K_i$ are represented by positive essentially self-adjoint operators acting on $L^2(\R^{l(w_0)})$, where $l(w_0)$ is the length of the longest element $w_0\in W$ of the Weyl group.
\item Define the transcendental generators:
\Eq{\til{\be_i}:=\be_i^{\frac{1}{b_i^2}},\tab \til{\bf_i}:=\bf_i^{\frac{1}{b_i^2}},\tab \til{K_i}:=K_i^{\frac{1}{b_i^2}}.\label{transdef}}
Then \begin{itemize}
\item if $\g$ is simply-laced, the generators $\til{\be_i},\til{\bf_i},\til{K_i}$ are obtained by replacing $b$ with $b\inv$ in the representations of the generators $\be_i,\bf_i,K_i$. \\
\item If $\g$ is of type $B,C,F,G$, then the generators $\til{E_i},\til{F_i},\til{K_i}$ with
\Eq{\til{\be_i}:=\left(\frac{\bi}{\til{q_i}-\til{q_i}\inv}\right)\inv \til{E_i},\tab \til{\bf_i}:=\left(\frac{\bi}{\til{q_i}-\til{q_i}\inv}\right)\inv \til{F_i}}
generate $\cU_{\til{q}}({}^L\g_\R)$ defined in the previous section. Here $\til{q_i}=e^{\pi ib_i^{-2}}$.
\end{itemize}
\item The generators $\be_i,\bf_i,K_i$ and $\til{\be_i},\til{\bf_i},\til{K_i}$ commute weakly up to a sign.
\end{itemize}
\end{Thm}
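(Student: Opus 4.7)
The plan is to construct the representation $\cP_\l$ explicitly on $L^2(\R^N)$ with $N = l(w_0)$, following the recipe developed in \cite{FI, Ip2, Ip3}. First I would fix a reduced expression $w_0 = s_{i_1}s_{i_2}\cdots s_{i_N}$ for the longest Weyl element and use it to parametrize coordinates $(u_1,\ldots,u_N)$ on $\R^N$, with conjugate momenta $p_k = -\bi\,\partial_{u_k}$. The Cartan generators $K_i$ are realized as exponentials of linear combinations of the $u_k$ twisted by the continuous parameter $\l_i$, so that the weight grading matches the sequence of reflections encoded by the reduced word. The operator $\bf_i$ is a single positive exponential $e^{\pi b_i u_{k(i)}}$ (up to suitable additions) with $k(i)$ the first occurrence of $i$ in the reduced word, and $\be_i$ is obtained by conjugating $\bf_i$ with a composition of quantum dilogarithms $\Phi_b$ realizing the Lusztig braid action; this produces an expression as a sum of monomials in $e^{2\pi b_i u_k}$ and $e^{2\pi b_i p_k}$.

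Next I would verify the defining relations. The Cartan-type relations and the commutator $[\be_i,\bf_j]$ follow from the Weyl relation $e^{2\pi b x}e^{2\pi b p} = q^2 e^{2\pi b p}e^{2\pi b x}$ with $[x,p]=\bi$, applied to the appropriate linear combinations. The Serre relations reduce, using the pentagon identity for $\Phi_b$, to functional identities on pairs of Weyl-type operators; these are established systematically in \cite{Ip2} for the simply-laced cases and in \cite{Ip3} for types $B,C,F,G$, where the rescaling $b_s = b/\sqrt{2}$ or $b/\sqrt{3}$ is precisely what absorbs the asymmetry between long and short roots so that the quantum integers $[k]_{q_i}$ appearing in the Serre relations come out correctly.

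For positive essentially self-adjointness, each generator is exhibited as a sum of positive operators of the form $e^{2\pi b_i X_\alpha}$ with $X_\alpha$ self-adjoint linear in the $u_k, p_k$; one invokes the theorem that a sum of positive self-adjoint operators which pairwise $q$-commute (i.e.\ $UV = q^{2c}VU$ for real $c$) is again positive essentially self-adjoint on the common smooth core. To derive the transcendental relations \eqref{transdef}, functional calculus gives $\be_i^{1/b_i^2}$ as the operator obtained from $\be_i$ by replacing $b_i$ by $b_i\inv$ throughout, which in the non-simply-laced case interchanges long and short root data and thus yields the generator of $\cU_{\til{q}}({}^L\g_\R)$. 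The weak commutation up to a sign between the $q$ and $\til{q}$ copies is the familiar modular double phenomenon: whenever $UV = e^{2\pi\bi b^2}VU$ for positive self-adjoint $U,V$, functional calculus yields $U V^{1/b^2} = e^{2\pi\bi} V^{1/b^2}U = V^{1/b^2}U$ on the analytic core, with the sign arising from the half-integer shifts in the exponents of the Weyl coordinates.

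The main obstacle will be the verification of the Serre relations in the non-simply-laced case, since the explicit form of $\be_i$ involves a positive Lusztig-type expansion of increasing combinatorial complexity, and the required identities are not single applications of the pentagon but compositions of several $\Phi_b$-intertwiners along the chosen reduced word; keeping track of the normalizing factors $b_i$ so that everything assembles into an actual quantum Serre relation is delicate. A secondary but important point is that the construction a priori depends on the choice of reduced expression, so one must also check, via unitary equivalence under Lusztig's braid group action realized by quantum dilogarithm intertwiners, that different reduced words produce unitarily equivalent representations; this is what makes $\cP_\l$ a genuinely well-defined family parametrized by $\l \in \R_+^n$ rather than by the combinatorial data of the word.
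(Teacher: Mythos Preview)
The paper does not prove this theorem at all: it is stated as a summary of results established in \cite{FI, Ip2, Ip3}, and the only content the paper adds is the explicit formulas recorded immediately afterward in Theorem~\ref{FKaction}. So there is no ``paper's own proof'' to compare against beyond the citation.

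Your sketch is broadly in the spirit of the construction in those references, but it contains a concrete inaccuracy that would derail an actual write-up. You have the roles of $\be_i$ and $\bf_i$ reversed. In the construction recalled in Theorem~\ref{FKaction}, it is $\bf_i$ that is the complicated operator, a sum over \emph{all} occurrences of the index $i$ in the reduced word (not a single exponential at the first occurrence), while $\be_i$ is the simple one, given by the single term $[u_i^1]e(-2p_i^1)$ once the reduced expression is chosen with $s_i$ on the right. Moreover, $\be_i$ is not obtained from $\bf_i$ by conjugation with quantum dilogarithms; the two are defined independently, and the quantum dilogarithm intertwiners enter when one passes between different reduced expressions of $w_0$ (establishing unitary equivalence), not when one builds $\be_i$ from $\bf_i$. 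Your description of the verification of the Serre relations, positive self-adjointness via $q$-commuting sums, and the transcendental relations is otherwise a reasonable outline of how the cited papers proceed.
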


The positive representations are constructed for each reduced expression $w_0\in W$ of the longest element of the Weyl group, and representations corresponding to different reduced expressions are unitary equivalent.
\begin{Def}\label{variables} Fix a reduced expression of $w_0=s_{i_1}...s_{i_N}$. Let the coordinates of $L^2(\R^N)$ be denoted by $\{u_i^k\}$ so that $i$ is the corresponding root index, and $k$ denotes the sequence this root is appearing in $w_0$ from the right. Also denote by $\{v_j\}_{j=1}^N$ the same set of coordinates counting from the left, and $v(i,k)$ the index such that $u_i^k=v_{v(i,k)}$.
\end{Def}
\begin{Ex} The coordinates of $L^2(\R^6)$ for $A_3$ corresponding to $w_0=s_3s_2s_1s_3s_2s_3$ is given by
$$(u_3^3,u_2^2,u_1^1,u_3^2, u_2^1,u_3^1)=(v_1,v_2,v_3,v_4,v_5,v_6).$$
\end{Ex} 
\begin{Def}We denote by $p_u=\frac{1}{2\pi \bi}\del[,u]$ and 
\Eq{e(u)&:=e^{\pi bu},\tab [u]:=q^\half e(u)+q^{-\half}e(-u),}
so that
\Eq{\label{standardform}[u]e(-2p):=(q^\half e^{\pi bu}+q^{-\half}e^{-\pi bu})e^{-2\pi bp} = e^{\pi b(u-2p)}+e^{\pi b(-u-2p)}}
is positive whenever $[p,u]=\frac{1}{2\pi \bi}$.
\end{Def}

\begin{Def}\label{usul}By abuse of notation, we denote by
\Eq{[u_s+u_l]e(-2p_s-2p_l):=e^{\pi b_s(-u_s-2p_s)+\pi b_l(-u_l-2p_l)}+e^{\pi b_s(u_s-2p_s)+\pi b_l(u_l-2p_l)},}
where $u_s$ (resp. $u_l$) is a linear combination of the variables corresponding to short roots (resp. long roots). The parameters $\l_i$ are also considered in both cases. Similarly $p_s$ (resp. $p_l$) are linear combinations of the $p$ shifting of the short roots (resp. long roots) variables. This applies to all simple $\g$, with the convention given in Definition \ref{qi}.
\end{Def}

\begin{Thm}\label{FKaction}\cite{Ip2,Ip3} Using the notation of Definition \ref{usul}, for a fixed reduced expression of $w_0=s_{i_1}...s_{i_N}$, the positive representation $\cP_\l$ is given by
\Eq{
\bf_i=&\sum_{k=1}^m\left[-\sum_{j=1}^{v(i,k)-1} a_{i_j,i}v_j-u_i^k-2\l_i\right]e(2p_{i}^{k}),\label{FF}\\
K_i=&e\left(-\sum_{k=1}^{l(w_0)} a_{i_k,i}v_k-2\l_i\right).\label{KK}
}
where $m$ is the number of root index $i$ appearing in $w_0$.
By taking $w_0=w's_i$ so that the simple reflection for root $i$ appears on the right, the action of $\be_i$ is given by
\Eq{\label{EE} 
\be_i=&[u_i^1]e(-2p_i^1).}
\end{Thm}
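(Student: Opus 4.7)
The plan is to verify directly that the operators in \eqref{FF}, \eqref{KK}, \eqref{EE} satisfy the defining relations of $\cU_q(\g)$, that they are positive essentially self-adjoint on $L^2(\R^N)$, and that the resulting family is unitarily equivalent for different reduced expressions of $w_0$. Positivity and essential self-adjointness are manifest from \eqref{standardform}: each summand of $\bf_i$ is of the shape $e^{\pi b(\pm u - 2p)}$ on a Heisenberg pair $[p,u]=\frac{1}{2\pi\bi}$, hence a positive self-adjoint operator, and any finite sum of such summands satisfying a Weyl $q$-commutation remains essentially self-adjoint by standard results on sums of positive operators.

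The Cartan relations $K_i\bf_j=q_i^{-a_{ij}}\bf_jK_i$ and $K_i\be_j=q_i^{a_{ij}}\be_jK_i$ reduce to a direct calculation: conjugating the linear exponent of $K_i$ from \eqref{KK} past a shift $e(2p_j^k)$ in $\bf_j$ produces the Cartan coefficient $a_{i_k,j}$, which telescopes to $q_i^{\mp a_{ij}}$ after summing. The relation $[\be_i,\bf_j]=\d_{ij}\frac{K_i-K_i\inv}{q_i-q_i\inv}$ is first verified in a reduced expression ending in $s_i$, so that $\be_i$ has the simple form \eqref{EE}; in the diagonal case $i=j$ the leading terms of $[\be_i,\bf_i]$ telescope to $K_i-K_i\inv$ after cancelling the normalization factor $\bi/(q_i-q_i\inv)$ of \eqref{smallef}, while the vanishing for $i\ne j$ is forced by the combinatorics of the index function $v(i,k)$.

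The main obstacle is the Serre relation \eqref{SerreF} for the $\bf_i$. Any two summands $A,B$ appearing in $\bf_i$ and $\bf_j$ satisfy a Weyl commutation $AB=q_i^c BA$ with integer $c$ dictated by the Cartan data and the relative positions in $w_0$. Expanding the Serre sum, one commutes shift operators to the right and reorganizes the resulting $[\,\cdot\,]$-polynomial prefactors, at which point the standard quantum $q$-binomial identity forces each grouped term to vanish. The bookkeeping depends on the Dynkin type since $1-a_{ij}\in\{1,2,3,4\}$, with the doubly- and triply-laced cases requiring the most care. The analogous Serre relation \eqref{SerreE} for $\be_i$ is proved after extending \eqref{EE} to arbitrary reduced expressions by exhibiting unitary intertwiners that implement each elementary braid move $s_is_j\leftrightarrow s_js_i$ and $s_is_js_i\leftrightarrow s_js_is_j$; these intertwiners are built from the noncompact quantum dilogarithm as in \cite{Ip2,Ip3}.

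Finally, the transcendental relations \eqref{transdef} follow from the functional calculus for positive self-adjoint operators. Each summand $[u]e(-2p)$ is of a form closed under $b\mapsto b\inv$, so raising to the power $1/b_i^2$ reproduces formulas with $b$ replaced by $b_i\inv$, yielding $\cU_{\til q}(\g_\R)$ in the simply-laced case and $\cU_{\til q}({}^L\g_\R)$ otherwise after interchanging long and short roots. Weak commutativity of the two halves of the modular double up to a sign then follows from the Baker--Campbell--Hausdorff identity applied to $e^{\pi bx}$ and $e^{\pi b\inv y}$ with $[x,y]$ a multiple of $\frac{1}{2\pi\bi}$.
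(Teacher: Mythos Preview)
The paper does not contain a proof of this theorem: it is quoted verbatim from \cite{Ip2,Ip3} and used as a black box, so there is no in-paper argument to compare your proposal against. Your outline is broadly the shape of the arguments in those references --- verify the Cartan and commutator relations directly from the exponents, handle the Serre relations by exploiting the Weyl $q$-commutation between summands, and build the unitary intertwiners for braid moves out of the quantum dilogarithm --- so at the level of strategy you are on the right track.

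That said, a few of your ``manifest'' or ``standard'' steps hide real work and would not pass as a proof. Essential self-adjointness of a finite sum of positive operators is \emph{not} automatic; in \cite{Ip2,Ip3} this is established by exhibiting each $\bf_i$ as a quantum-dilogarithm conjugate of a single positive generator (the ``rank one reduction''), not by a general functional-analytic lemma. Likewise, the vanishing of $[\be_i,\bf_j]$ for $i\neq j$ is not merely ``forced by the combinatorics of $v(i,k)$'': one genuinely needs the unitary transformation to a reduced word ending in $s_i$ and then a term-by-term cancellation, and the Serre relations in the non-simply-laced cases require the explicit Lusztig-type identities worked out separately for each Dynkin type rather than a single $q$-binomial identity. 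If you intend this as more than a sketch, those are the places where the argument must be filled in.
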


Let us recall the explicit expression for the positive representations in the case of $\cU_{q\til{q}}(\sl(2,\R))$. For details of the construction and the other cases please refer to \cite{Ip2,Ip3}. 

\begin{Prop}\cite{BT,PT2}\label{canonicalsl2} The positive representation $\cP_\l$ of $\cU_{q\til{q}}(\sl(2,\R))$ acting on $L^2(\R$) by positive unbounded essentially self-adjoint operators is given by
\Eqn{
\be=&[u-\l]e(-2p):=e^{\pi b(-u+\l-2p)}+e^{\pi b(u-\l-2p)},\\
\bf=&[-u-\l]e(2p):=e^{\pi b(u+\l+2p)}+e^{\pi b(-u-\l+2p)},\\
K=&e(-2u):=e^{-2\pi bu}.
}
Note that it is unitary equivalent to the canonical form \eqref{FF}-\eqref{EE} by $u\mapsto u+\l$.
\end{Prop}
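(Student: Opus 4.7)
The statement has two independent parts: (i) the displayed operators define a representation of $\cU_{q\til q}(\sl(2,\R))$ on $L^2(\R)$ by positive essentially self-adjoint operators satisfying the transcendental relations; and (ii) this representation is unitarily equivalent to the canonical form \eqref{FF}--\eqref{EE} for $w_0=s_1$ via $u\mapsto u+\l$. Part (ii) is immediate: $\sl(2)$ has $N=m=1$, so \eqref{FF}--\eqref{EE} specialise to $\be_c=[u]e(-2p)$, $\bf_c=[-u-2\l]e(2p)$, $K_c=e(-2u-2\l)$; the unitary translation $(T_\l f)(u):=f(u+\l)$ on $L^2(\R)$ leaves $p$ invariant while sending $u\mapsto u+\l$, and substituting into the proposition's formulas reproduces $\be_c,\bf_c,K_c$ exactly. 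So the bulk of the work is (i).

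For the algebraic content of (i), I would use the Weyl relation $e^{\a u}e^{\b p}=e^{\b p}e^{\a u}e^{-\a\b/(2\pi\bi)}$ coming from $[p,u]=\frac{1}{2\pi\bi}$. The Cartan relations $K\be=q^2\be K$ and $K\bf=q^{-2}\bf K$ then follow because conjugation by $K=e(-2u)$ shifts the $-2p$ inside each exponential summand by the central scalar $\pm 2\pi\bi b^2$, producing the factor $q^{\pm 2}$. For the Serre relation, expand $\be$ and $\bf$ into two exponentials each, giving four products in each of $\be\bf$ and $\bf\be$. Two of the four products in each expansion have commuting exponents and reduce to the scalars $e^{\pm 2\pi b\l}$, which cancel in the difference; the remaining four, after using the Weyl relation, reduce to $q\inv K+qK\inv$ from $\be\bf$ and $qK+q\inv K\inv$ from $\bf\be$. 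Their difference is $-(q-q\inv)(K-K\inv)$, which is exactly the rescaled Serre relation $[\be,\bf]=-(q-q\inv)(K-K\inv)$ obtained from \eqref{smallef} and the original $[E,F]=(K-K\inv)/(q-q\inv)$.

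For the analytic part, each summand $e^{\pi b(\pm u\mp\l-2p)}$ is the exponential of a self-adjoint affine-linear combination of $u$ and $p$ and is hence a positive essentially self-adjoint operator. The sum of two such exponentials, whose exponents have a central commutator, is essentially self-adjoint and positive by the standard results on modular double exponentials systematically invoked in \cite{Ip2,Ip3}; concretely, after a suitable unitary change of variable one reduces to a multiplication operator of the form $e^{\pi b x}+e^{-\pi b x}$. The transcendental generators $\til\be:=\be^{1/b^2}$ etc.\ are then well-defined by functional calculus on these positive operators, and a direct substitution shows that replacing $b$ by $b\inv$ in the displayed formulas yields precisely $\til\be,\til\bf,\til K$, so that the modular double structure is realised. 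The main obstacle is the commutator computation: careful bookkeeping of signs in the Weyl relation together with the $q^{\pm 1/2}$ factors hidden inside the brackets $[\,\cdot\,]$ is needed to produce exactly the coefficient $-(q-q\inv)$ and not a spurious scalar; the positivity and essential self-adjointness claims themselves reduce to well-known functional-analytic facts about exponentials of canonically conjugate variables and do not require additional argument here.
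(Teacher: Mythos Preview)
The paper does not supply its own proof of this proposition: it is stated with attribution to \cite{BT,PT2} and no argument follows. The only content beyond the citation is the one-line remark (inside the statement) that the shift $u\mapsto u+\l$ matches the canonical form \eqref{FF}--\eqref{EE}. So there is no ``paper's proof'' to compare against in any substantive sense.

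Your verification is correct and supplies what the paper omits. Part (ii) is exactly the paper's remark, made explicit: for $\sl_2$ with $w_0=s_1$ one has $\be_c=[u]e(-2p)$, $\bf_c=[-u-2\l]e(2p)$, $K_c=e(-2u-2\l)$, and the translation $u\mapsto u+\l$ recovers these from the displayed formulas. For part (i), your commutator computation is right: expanding $\be\bf$ and $\bf\be$ each into four BCH products, the two ``diagonal'' products in each give the scalars $e^{\pm 2\pi b\l}$ (which cancel in the difference), and the cross terms give $q\inv K+qK\inv$ for $\be\bf$ versus $qK+q\inv K\inv$ for $\bf\be$, whence $[\be,\bf]=-(q-q\inv)(K-K\inv)$, matching the rescaling \eqref{smallef} of $[E,F]=(K-K\inv)/(q-q\inv)$. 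Incidentally, the paper does carry out the $\bf\be$ half of this computation verbatim in the proof of Proposition~3.1, so your approach is entirely in the spirit of what the authors do elsewhere. The analytic and modular-double claims you defer to \cite{Ip2,Ip3} are likewise what the paper itself relies on.
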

\subsection{Universal $R$ matrices for $\cU_{q}(\g)$}\label{sec:prelim:Rmatrix}
Here we summarize the construction of universal $R$ matrices for the braiding of $\cU_q(\g)$. Let $q:=e^{h/2}$. It is known \cite{D,J} that for the quantum group $\cU_h(\g)$ as a $\C[[h]]$-algebra completed in the $h$-adic topology, one can associate certain canonical, invertible element $R$ in an appropriate completion of $(\cU_h(\g))^{\ox 2}$ such that the the braiding relation
\Eq{\D'(X)R:=(\s \circ \D)(X)R=R\D(X), \tab \s(x\ox y)=y\ox x\label{br}} 
is satisfied.

For the quantum group $\cU_h(\g)$ associated to the simple Lie algebra $\g$, an explicit multiplicative formula has been computed independently in \cite{KR} and \cite{LS}, where the central ingredient involves the quantum Weyl group which induces Lusztig's isomorphism $T_i$. Explicitly, let 
\Eq{[U,V]_q:=qUV-q\inv VU} be the $q$-commutator.
\begin{Def}\cite{KR, Lu1}\label{Lus} The Lusztig's isomorphism is given by
\Eq{T_i(K_j)=K_jK_i^{-a_{ij}}, \tab T_i(E_i)=-q_iF_iK_i\inv, \tab T_i(F_i)=-q_i\inv K_iE_i,}
\Eq{T_i(E_j)=&(-1)^{a_{ij}}\frac{1}{[-a_{ij}]_{q_i}!}\left[\left[E_i,...[E_i,E_j]_{q_i^{\frac{a_{ij}}{2}}}\right]_{q_i^{\frac{a_{ij}+2}{2}}}...\right]_{q_i^{\frac{-a_{ij}-2}{2}}},\\
T_i(F_j)=&\frac{1}{[-a_{ij}]_{q_i}!}\left[\left[F_i,...[F_i,F_j]_{q_i^{\frac{a_{ij}}{2}}}\right]_{q_i^{\frac{a_{ij}+2}{2}}}...\right]_{q_i^{\frac{-a_{ij}-2}{2}}}.}
\end{Def}
Note that we have slightly modified the notations and scaling used in \cite{KR}.

\begin{Prop}\label{LuCox}\cite{Lu1, Lu2} The operators $T_i$ satisfy the Weyl group relations:
\Eq{\underbrace{T_iT_jT_i...}_{-a_{ij}'+2} = \underbrace{T_j T_i T_j...}_{-a_{ij}'+2}.\label{coxeter},}
where $-a'_{ij}=\max\{-a_{ij},-a_{ji}\}$.
Furthermore, for $\a_i,\a_j$ simple roots, and an element $w=s_{i_1}...s_{i_k}\in W$ such that $w(\a_i)=\a_j$, we have 
\Eq{T_{i_1}...T_{i_k}(X_i)=X_j} for $X=E,F,K$.
\end{Prop}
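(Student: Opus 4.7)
The plan is to verify the braid relations for the $T_i$ directly on the algebra generators, and then deduce the second assertion by a weight-space argument.

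First I would handle the easy case $X = K_j$. By the explicit formula $T_i(K_j) = K_j K_i^{-a_{ij}}$, the action on the Cartan subalgebra is (after identifying $K_j = q_j^{H_j}$) precisely the natural Weyl-group action on the coroot lattice. Hence the Coxeter relations on $K$-generators are equivalent to the usual Coxeter relations among the $s_i\in W$, which are classical.

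The core of the proof is verifying the braid relations of length $m=m_{ij}\in\{2,3,4,6\}$ on $E_j$ and $F_j$. Here one splits into cases according to whether $j$ coincides with one of the two indices $i_1,i_2$ involved in the relation, or is disjoint from them. The ``external'' cases reduce to the rank-2 computation applied inside a larger Dynkin diagram, using that $T_i$ acts on $E_j, F_j$ only through the entries $a_{ij}$ and $a_{ji}$. The ``internal'' cases are the genuine content: one unfolds the nested $q$-commutators of Definition \ref{Lus} using the iterated formula
\[
T_i(E_j) = (-1)^{a_{ij}}\frac{1}{[-a_{ij}]_{q_i}!}\bigl[E_i,[E_i,\ldots,[E_i,E_j]_{q_i^{a_{ij}/2}}\ldots]\bigr],
\]
and repeatedly applies the quantum Serre relations \eqref{SerreE}, \eqref{SerreF} to collapse one side of the braid identity into the other. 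The computations are standard but increasingly intricate as $-a'_{ij}$ grows: the hardest case is $G_2$, which is where I expect the main obstacle to lie, since one must juggle $q$-commutators with the four distinct brackets $q_s^{\pm 1/2}$ and $q_l^{\pm 1/2}$ coming from the asymmetric Cartan entries $a_{ij}=-1$, $a_{ji}=-3$.

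For the second assertion, once the Coxeter relations are in place, the operator $T_w:=T_{i_1}\cdots T_{i_k}$ depends only on $w\in W$ and not on the reduced expression. I would then run an induction on $\ell(w)$ using the weight grading $K_k X K_k^{-1}=q_k^{(\alpha_k,\mu)}X$: one checks inductively that $T_i$ sends a weight vector of weight $\mu$ to one of weight $s_i(\mu)$. Consequently $T_w(E_i)$ lies in the weight space of weight $w(\alpha_i)=\alpha_j$. Since that weight space inside $\cU_q(\g)_+$ is one-dimensional and spanned by $E_j$, we have $T_w(E_i)=c\cdot E_j$ for a scalar $c$, and $c=1$ is verified by inspecting the leading coefficient of the iterated $q$-bracket in the base case $w=s_k$ with $s_k(\alpha_i)=\alpha_j$. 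The argument for $F_i$ is parallel, and the statement for $K_i$ follows from the $W$-equivariance established in the first step.
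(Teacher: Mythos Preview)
The paper itself does not supply a proof of this proposition: it is stated with attribution to \cite{Lu1, Lu2} and invoked as background. So there is no in-paper argument to compare your proposal against.

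Your plan for the braid relations is the standard one and matches what Lusztig does: reduce to the rank-$2$ parabolic subalgebra generated by $E_i,F_i,K_i,E_j,F_j,K_j$ and verify the identity case by case ($m_{ij}=2,3,4,6$) by unwinding the nested $q$-commutators and applying the quantum Serre relations. The $G_2$ case is indeed the longest.

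Your argument for the second assertion, however, has a real gap. The weight-space step would work if you knew that $T_w(E_i)\in\cU_q(\fn^+)$, since the $\a_j$-weight space there is one-dimensional. But the $T_i$ do \emph{not} preserve $\cU_q(\fn^+)$: already $T_i(E_i)=-q_iF_iK_i^{-1}$. And the $\a_j$-weight space of the full algebra $\cU_q(\g)$ is infinite-dimensional (it contains $E_j\cdot\cU_q(\fh)$, products $E_kF_l\cdots$ of total weight $\a_j$, and so on), so knowing only the weight of $T_w(E_i)$ pins down nothing. Establishing that $T_w(E_i)\in\cU_q(\fn^+)$ whenever $w(\a_i)>0$ is itself a nontrivial theorem of Lusztig, proved alongside the construction of PBW bases; it is essentially of the same depth as the statement you are trying to deduce and does not follow from weight bookkeeping alone. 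Your proposed induction on $\ell(w)$ also does not close: writing $w=s_{i_1}w'$ with $\ell(w')=\ell(w)-1$, the root $w'(\a_i)=s_{i_1}(\a_j)$ is generally not simple, so the inductive hypothesis is unavailable, and your ``base case'' $s_k(\a_i)=\a_j$ only occurs when $a_{ki}=0$ and $j=i$. The correct route either first proves the positivity statement $T_w(E_i)\in\cU_q(\fn^+)$ for $w(\a_i)>0$, or reduces (via the braid relations already established) to a chain of rank-$2$ moves each of which is one of the explicit identities verified in the first part.
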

\begin{Def} Define the (upper) quantum exponential function as
\Eq{Exp_{q}(x)=\sum_{k=0}^\oo \frac{z^k}{\lceil k\rceil_q!},}
where $\lceil k\rceil_q := \frac{1-q^k}{1-q}$, so that
\Eq{\lceil k\rceil_{q^2}!=[k]_q! q^{\frac{k(k-1)}{2}}.}
\end{Def}
\begin{Thm}\cite{KR, LS} Let $w_0=s_{i_1}...s_{i_N}$ be a reduced expression of the longest element of the Weyl group. Then the universal $R$ matrix is given by
\Eq{\label{RRn}R=\bQ^\half \what{R}(i_N|s_{i_1}...s_{i_N-1})... \what{R}(i_2|s_{i_1})\what{R}(i_1)\bQ^\half,}
where 
\Eq{\bQ:=\prod_{i=1}^n q_i^{H_i\ox W_i},\tab W_i := \sum_{j=1}^n(A\inv)_{ij}H_j,}
\Eq{\label{RRnexp}\what{R}(i):=&Exp_{q_i^{-2}}((q_i-q_i^{-1})E_i\ox F_i),\\
\what{R}(i_l|s_{i_1}...s_{i_{l-1}}):=&(T_{i_1}\inv \ox T_{i_1}\inv)...(T_{i_{l-1}}\inv \ox T_{i_{l-1}}\inv)\what{R}(i_l).}
\end{Thm}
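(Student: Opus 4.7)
The strategy is to verify that the stated element $R$ is \emph{the} universal $R$-matrix by confirming its two defining properties: the braiding relation $\D'(X)R=R\D(X)$ on all generators $X\in\{E_j,F_j,K_j\}$, and the quasi-triangularity identities $(\D\ox\mathrm{id})R=R_{13}R_{23}$ and $(\mathrm{id}\ox\D)R=R_{13}R_{12}$. By Drinfeld's uniqueness theorem for the quasi-triangular structure on $\cU_h(\g)$ (modulo a central twist in $\cU_h(\fh)^{\ox 2}$, which $\bQ^\half\ldots\bQ^\half$ fixes), these two properties pin down $R$ uniquely. First I would analyse the Cartan factor $\bQ$: using $W_i=\sum_j(A\inv)_{ij}H_j$ and the dual pairing $(w_i,\a_j^\vee)=\d_{ij}$, a direct computation shows that conjugation of $1\ox E_j$ and $E_j\ox 1$ (and similarly $F_j$) by $\bQ^\half$ introduces precisely the $K_j$-shifts needed to convert $\D(X)$ into $\D'(X)$ on the Cartan-twisted pieces. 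This reduces the braiding to a statement about the middle factor $\til R:=\what R(i_N|s_{i_1}\cdots s_{i_{N-1}})\cdots\what R(i_1)$ lying in a completion of $\cU_q^+\ox\cU_q^-$.

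For the seed factor $\what R(i)=Exp_{q_i^{-2}}((q_i-q_i\inv)E_i\ox F_i)$ the rank-one braiding is the classical $\cU_{q_i}(\sl_2)$ identity, proved by expanding the quantum exponential and using
\Eq{[E_i,F_i^n]=[n]_{q_i}F_i^{n-1}\frac{q_i^{1-n}K_i-q_i^{n-1}K_i\inv}{q_i-q_i\inv}.\nonumber}
For each higher factor $\what R(i_l|s_{i_1}\cdots s_{i_{l-1}})$, I would invoke the functoriality of Lusztig's isomorphism (Proposition \ref{LuCox}): the tensor operator $(T_{i_1}\inv\ox T_{i_1}\inv)\cdots(T_{i_{l-1}}\inv\ox T_{i_{l-1}}\inv)$ transports the simple-root braiding of $\what R(i_l)$ into a braiding identity for the rank-one subalgebra generated by the Lusztig root vectors $T_{i_1}\inv\cdots T_{i_{l-1}}\inv(E_{i_l})$ and $T_{i_1}\inv\cdots T_{i_{l-1}}\inv(F_{i_l})$, which are attached to the positive root $\b_l:=s_{i_1}\cdots s_{i_{l-1}}(\a_{i_l})$. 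Convergence of each quantum exponential is automatic in the $h$-adic topology since the root vectors are locally nilpotent on weight modules.

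The main obstacle is the global assembly: one must show that the ordered product $\til R$ intertwines $\D$ and $\D'$ on \emph{all} of $\cU_h(\g)$, not just one root factor at a time. The subtlety is that for a non-simple positive root $\b$, the coproduct $\D(E_\b)$ of the Lusztig root vector acquires lower-height correction terms beyond $1\ox E_\b+E_\b\ox K_\b$, so naive term-by-term commutation of $\D'(X)$ through the product breaks. The approach of \cite{KR,LS} exploits the convexity of the Lusztig PBW ordering attached to $w_0=s_{i_1}\cdots s_{i_N}$: the $q$-commutator $[E_{\b_j},E_{\b_k}]_{q^{(\b_j,\b_k)}}$ with $j<k$ is supported on ordered monomials in the intermediate root vectors $E_{\b_m}$ with $j<m<k$, and this strictly triangular structure makes the spurious corrections cancel telescopically as $\D'(X)$ is passed through the ordered product. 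Independence of $R$ from the choice of reduced expression of $w_0$ then follows from the braid relations among the $T_i$ in Proposition \ref{LuCox}, combined with the fact that two adjacent reduced expressions differ by a rank-two move on which the corresponding finite product can be recognised as the known rank-two $R$-matrix. The quasi-triangularity axioms admit a parallel inductive verification along the same PBW filtration, or can be recovered \emph{a posteriori} from the uniqueness statement once the braiding relation is in hand.
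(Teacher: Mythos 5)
The paper does not prove this theorem at all: it is imported verbatim from \cite{KR} and \cite{LS}, so there is no internal argument to compare yours against. Judged against the cited sources, your route is genuinely different. Both \cite{KR} and \cite{LS} work with the \emph{quantum Weyl group}: invertible elements $w_i$ in a completion of $\cU_h(\g)$ that implement the Lusztig automorphisms $T_i$ by conjugation, satisfy the braid relations, and obey a coproduct identity of the shape $\D(w_i)=\what{R}(i)^{-1}\,(w_i\ox w_i)$ (up to Cartan factors). Writing $w_0=w_{i_1}\cdots w_{i_N}$ and expanding $\D(w_{0})$ factor by factor is what produces the ordered product of twisted quantum exponentials directly; the braiding relation and independence of the reduced expression come along for free from the braid relations among the $w_i$. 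Your proposal instead verifies the braiding relation and quasi-triangularity head-on and appeals to uniqueness, which is closer in spirit to the Khoroshkin--Tolstoy-style treatments and is a legitimate alternative, but it carries more unfinished analytic and combinatorial weight.

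Two points in your sketch need attention. First, the uniqueness step is stated too loosely: the braiding relation alone determines $R$ only up to multiplication by elements commuting with the image of $\D$, so you must build into the ansatz that the middle factor lies in (a completion of) $\cU_h(\fn^+)\ox\cU_h(\fn^-)$ with zero-weight Cartan part confined to the two outer $\bQ^{\half}$ factors; only with that triangularity constraint does the standard uniqueness argument pin down $R$, and quasi-triangularity then cannot simply be ``recovered a posteriori'' without it. Second, the ``telescopic cancellation'' of the lower-height correction terms in $\D$ of the non-simple root vectors, controlled by the convexity of the PBW ordering, is not a detail: it is the entire technical content of the theorem in this approach, and as written it is asserted rather than proved. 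So the proposal is a reasonable road map, but at its crucial step it names the difficulty without resolving it, whereas the quantum-Weyl-group argument of \cite{KR,LS} bypasses that combinatorics altogether.
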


In \cite{Ip4}, the universal $R$ operator is studied in the setting of split real quantum groups $\cU_{q\til{q}}(\g_\R)$, where it is expressed as an element in certain multiplier Hopf algebra, and the Lusztig's isomorphisms $T_i$ are extended to the rescaled positive generators $\be_i$. The rescaled image is useful later to describe the generalized Casimir operators in Section \ref{sec:lowrank:A2} and \ref{sec:lowrank:B2}, but we will not need the explicit construction of the universal $R$ operator in this paper.
\section{The case of $\cU_{q\til{q}}(\sl(2,\R))$}\label{sec:sl2}
The classical Casimir operator $C\in Z(\cU(\sl_2))$ in the case of $\sl_2$ is well-known, and it is given by 
\Eq{C = FE+\left(H+\frac{1}{2}\right)^2.}
In the case of quantum $\cU_q(\sl_2)$, it is also known that the Casimir operator is given by
\Eq{
C &= FE+\left[H+\frac{1}{2}\right]_q^2\\
&= FE + \frac{qK+q\inv K^{-1}}{(q-q\inv)^2}+constant,\nonumber
}
where we formally denote by $K:=q^H$. Rewriting using the rescaling \eqref{smallef}, we denote the rescaled Casimir $\bC$ as
\Eq{\bC &= \bf \be - qK-q\inv K\inv,
}
which can also be rewritten as
\Eq{\bC = \be \bf - q\inv K-qK\inv.}
In the case of split real quantum group, the action of $\bC$ on the positive representation $\cP_\l$ from Proposition \ref{canonicalsl2} can be computed easily:
\begin{Prop}\cite{PT2} $\bC$ acts on $\cP_\l$ as multiplication by the scalar \Eq{C(\l) = e^{2\pi b\l}+e^{-2\pi b\l}.}
\end{Prop}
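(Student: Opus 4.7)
The proof reduces to a direct computation using the explicit realisation in Proposition \ref{canonicalsl2}. The plan is to write $\be=A+B$ and $\bf=C+D$ with
$A=e^{\pi b(-u+\l-2p)}$, $B=e^{\pi b(u-\l-2p)}$, $C=e^{\pi b(u+\l+2p)}$, $D=e^{\pi b(-u-\l+2p)}$, and expand the product $\bf\,\be=CA+CB+DA+DB$. Each of the four cross terms is then evaluated via the Baker--Campbell--Hausdorff identity $e^Xe^Y=e^{X+Y}e^{[X,Y]/2}$, which applies here because the Heisenberg relation $[p,u]=\frac{1}{2\pi\bi}$ makes every commutator of exponents central.

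The key observation is that the four terms split into two types. For $CA$ and $DB$, the operator parts of the exponents are negatives of one another, so both the sum $X+Y$ and the commutator $[X,Y]$ vanish; these two products therefore collapse to the $\l$-dependent scalars $e^{2\pi b\l}$ and $e^{-2\pi b\l}$, which together already produce the desired $C(\l)$. For the remaining terms $CB$ and $DA$, the commutator of the exponents evaluates to $\mp 2\pi\bi b^2$, contributing a factor $q^{\mp 1}$ through BCH. Combining exponents then yields $CB=q\inv e^{2\pi b u}=q\inv K\inv$ and $DA=q\,e^{-2\pi b u}=qK$, which cancel exactly against the $-qK-q\inv K\inv$ part of the Casimir.

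There is no serious obstacle: the argument is pure book-keeping of signs in BCH, and the fact that two of the four cross terms commute is what makes the central character scalar at all. A useful consistency check is to repeat the computation on the alternative form $\bC=\be\,\bf-q\inv K-qK\inv$, which simply swaps the roles of $q$ and $q\inv$ in the two non-scalar cross terms while the two scalar ones are unchanged, giving the same answer. Self-adjointness of $\bC$ on $\cP_\l$ is then automatic since $C(\l)=e^{2\pi b\l}+e^{-2\pi b\l}$ is real and positive for $\l\in\R$, and this rank-one calculation foreshadows the general pattern in which the positive Casimirs $\bC_k$ will act on $\cP_\l$ by explicit positive scalars.
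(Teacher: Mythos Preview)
Your proposal is correct and follows essentially the same route as the paper: expand $\bf\,\be$ into four exponentials, use the BCH identity (the paper writes it as $e^{2\pi bA}e^{2\pi bB}=q^{2\pi\bi[A,B]}e^{2\pi b(A+B)}$) to evaluate each cross term, observe that $CA$ and $DB$ collapse to the scalars $e^{\pm 2\pi b\l}$ while $CB=q\inv K\inv$ and $DA=qK$ cancel against the Cartan part. Your write-up is slightly more detailed in explaining \emph{why} two of the commutators vanish, but the computation and the idea are identical.
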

\begin{proof} Using $e^{2\pi bA}e^{2\pi bB} = q^{2\pi i[A,B]}e^{2\pi b(A+B)}$ We have 
\Eqn{\bC &=\bf \be - qK - q\inv K\inv\\
&=(e^{\pi b(u+\l+2p)}+e^{\pi b(-u-\l+2p)})(e^{\pi b(u-\l-2p)}+e^{\pi b(-u+\l-2p)})-qe^{-2\pi bu}-q\inv e^{2\pi bu}\\
&=q\inv e^{2\pi bu}+e^{2\pi b\l}+e^{-2\pi b\l}+qe^{-2\pi bu}-qe^{-2\pi bu}-q\inv e^{2\pi bu}\\
&=e^{2\pi b\l}+e^{-2\pi b\l}.
}
\end{proof}
We see immediately that the eigenvalue is positive and bounded below by 2 for all values of $\l\in \R$. In particular, since $\cP_\l\simeq \cP_{-\l}$, the eigenvalues of $C(\l)$ and $C(-\l)$ coincides. In this paper, we will see that these results generalize to all higher ranks.

In the case of $\cU_{q\til{q}}(\sl(2,\R))$, the Casimir operator characterizes the positive representations. In particular by studying its spectrum, it provides a decomposition of the tensor products of $\cP_\l$.

\begin{Prop}\cite{NT} The Casimir operator $\bC$ acting on $\cP_\l\ox \cP_\mu$ is unitary equivalent to the self-adjoint operator
\Eq{e^{2\pi bu}+e^{-2\pi bu}+e^{2\pi bp}.\label{kc}}
\end{Prop}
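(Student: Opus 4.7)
The plan is to compute $\D(\bC)$ as an explicit unbounded self-adjoint operator on $L^2(\R)\otimes L^2(\R)\simeq L^2(\R^2)$ using the canonical form of $\cP_\l$ and $\cP_\mu$ from Proposition \ref{canonicalsl2}, and then to exhibit an affine symplectic change of variables that reduces it to the stated one-variable expression.

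First I would expand the coproduct using $\D(\bf)=K^{-1}\otimes\bf+\bf\otimes 1$, $\D(\be)=1\otimes\be+\be\otimes K$, and $\D(K)=K\otimes K$, reordering the middle term via $K^{-1}\be=q^{-2}\be K^{-1}$ and $\bf K=q^2K\bf$. Applying the single-variable identity $\bf\be=\bC+qK+q^{-1}K^{-1}$ on each tensor factor to rewrite $K^{-1}\otimes\bf\be$ and $\bf\be\otimes K$ then gives
\Eqn{
\D(\bC)=\bC\otimes K+K^{-1}\otimes\bC+(q+q^{-1})K^{-1}\otimes K+\bf\otimes\be+\be K^{-1}\otimes K\bf.
}
By the preceding proposition, the first two summands act as the multiplication operators $(e^{2\pi b\l}+e^{-2\pi b\l})\cdot K$ and $K^{-1}\cdot(e^{2\pi b\mu}+e^{-2\pi b\mu})$. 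After substituting the canonical form of $\be,\bf,K$ in each factor of $L^2(\R^2)$ and collecting terms using the standard Weyl product formulas for $e^{\alpha u+\beta p}$, I expect a cancellation parallel to the one-variable case to collapse everything into a short sum of positive exponentials in the linear forms $u_1\pm\l$, $u_2\pm\mu$, $p_1$, $p_2$.

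The second step is to exhibit an affine symplectic change of variables on $\R^4$, supplemented by a unitary translation absorbing $\l$ and $\mu$, which sends the surviving exponentials into the three terms $e^{2\pi bu}$, $e^{-2\pi bu}$, $e^{2\pi bp}$ of a single canonical pair $(u,p)$ with $[u,p]=\frac{1}{2\pi\bi}$, while the orthogonal canonical pair drops out of $\D(\bC)$ entirely. Both operations are implemented by explicit unitaries on $L^2(\R^2)$, yielding the required unitary equivalence. The $\l,\mu$-independence of the final expression is the expected shadow of the conjectural tensor-product decomposition $\cP_\l\otimes\cP_\mu\simeq\int^\oplus\cP_\alpha$, since the spectrum of $\D(\bC)$ should then be governed entirely by the parameter $\alpha$.

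The main obstacle is identifying the correct symplectic change of variables and $\l,\mu$-translation: one must collapse two canonical pairs into one and eliminate two parameters while simultaneously keeping each residual summand a single positive exponential. This bookkeeping is delicate, but in principle it is forced by matching Weyl-form factors term by term once the $\bf\otimes\be+\be K^{-1}\otimes K\bf$ piece has been fully expanded.
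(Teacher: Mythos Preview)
Your coproduct formula for $\D(\bC)$ is correct and matches the paper's (your term $\be K^{-1}\otimes K\bf$ equals their $K^{-1}\be\otimes\bf K$ after the obvious commutation). The gap is in the second step. An affine symplectic change of variables on $\R^4$, together with a $\l,\mu$-dependent translation, is implemented by a metaplectic unitary composed with a multiplication operator; such a conjugation sends each exponential $e^{\pi b L(u_1,p_1,u_2,p_2)}$ to another single exponential $e^{\pi b L'(u,p,u',p')}$. It therefore preserves the number of exponential summands and cannot collapse anything. After the straightforward $q$-cancellations you allude to (the analogue of the single-variable computation), you are still left with far more than three terms --- the paper counts fourteen --- whose exponents genuinely span the full four-dimensional space $(u_1,p_1,u_2,p_2)$. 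No linear change of variables will make eleven of them disappear or confine the remaining ones to a single canonical pair.

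The missing ingredient, and the one the paper's sketch singles out, is the quantum dilogarithm: conjugation by $g_b$ of a suitable positive operator gives a unitary equivalence between $u+v$ and $u$ whenever $uv=q^2vu$. This is a genuinely non-symplectic unitary and is exactly what reduces the term count. The paper also uses a preliminary simplification you do not mention: since $(K,\be)$ generate a quantum plane, $\be$ is invertible on $\cP_\l$, so one can substitute $\bf=(\bC+qK+q^{-1}K^{-1})\be^{-1}$ before expanding, which streamlines the expression of $\D(\bC)$ considerably prior to applying the dilogarithm identities.
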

\begin{proof}[Sketch of proof] The coproduct of the Casimir operator is given by
$$\D(\bC)=\bC\ox K+K\inv \ox \bC + K\inv \be\ox \bf K+\bf\ox \be + (q+q\inv)K\inv \ox K.$$
We note that $K,\be$ forms the quantum plane, hence unitary equivalent to a canonical representation given by positive operators. In particular $\be$ is invertible, and one can rewrite the generator $\bf$ as $$\bf = (\bC+qK+q\inv K\inv)\be\inv,$$ which simplifies the expression of $\D(\bC)$. Finally a special function called the \emph{quantum dilogarithm} is used extensively, which provides unitary equivalence between positive  self-adjoint operators of the form $u+v$ and $u$ whenever $uv=q^2vu$ (see e.g. \cite{Ip1} for details). This reduces the 14 terms above into the expression of \eqref{kc}.
\end{proof}

It turns out this operator played the special role of length operator in quantum Teichm\"{u}ller theory, and was studied extensively.
\begin{Lem} \cite{Ip1, Ka}The positive self adjoint operator 
\Eq{e^{2\pi bu}+e^{-2\pi bu}+e^{2\pi bp}\label{kashaev-casimir}} acting on $L^2(\R)$ has a spectral decomposition given by
\Eq{\int_{\R_+}^{\o+}\left(e^{2\pi b\nu}+e^{-2\pi b\nu}\right) d\mu(\nu),}
where the measure $\mu(\nu)$ is given by the quantum dilogarithm.
\end{Lem}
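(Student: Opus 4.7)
The plan is to diagonalize the positive self-adjoint operator $H:=e^{2\pi bu}+e^{-2\pi bu}+e^{2\pi bp}$ by constructing explicit generalized eigenfunctions built out of the non-compact quantum dilogarithm $G_b$, and then to establish a Plancherel-type formula identifying $H$ with multiplication by $e^{2\pi b\nu}+e^{-2\pi b\nu}$ on $L^2(\R_+,d\mu(\nu))$. First I would verify essential self-adjointness of $H$ on the Schwartz class. Each summand is a positive self-adjoint operator, and since $(u,p)$ form a Weyl pair $[p,u]=\frac{1}{2\pi\bi}$, one has $e^{2\pi bp}e^{2\pi bu}=q^2e^{2\pi bu}e^{2\pi bp}$ with $q=e^{\pi\bi b^2}$; standard techniques for positive sums of $q$-commuting exponentials (going back to Faddeev and Schm\"udgen) yield self-adjointness with spectrum bounded below by $0$.

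Next, I would construct the eigenfunctions using the fundamental shift relation
\[
\frac{G_b(x+\bi b)}{G_b(x)}=1-e^{2\pi bx}
\]
of the quantum dilogarithm, together with its modular dual for $b\inv$. The natural ansatz is a ratio such as
\[
\psi_\nu(u)=\frac{G_b(\tfrac{Q}{2}+\bi u+\bi\nu)}{G_b(\tfrac{Q}{2}+\bi u-\bi\nu)},
\]
chosen so that the translation action $e^{2\pi bp}:u\mapsto u+\bi b$ produces factors that combine with $e^{\pm 2\pi bu}$ to yield the eigenvalue $e^{2\pi b\nu}+e^{-2\pi b\nu}$. Direct substitution using the shift relation then verifies the eigenvalue equation $H\psi_\nu=(e^{2\pi b\nu}+e^{-2\pi b\nu})\psi_\nu$ as an algebraic identity, and the symmetry $\psi_\nu\leftrightarrow\psi_{-\nu}$ explains why the spectral variable runs over $\R_+$.

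Finally, I would define the transform $(\cU f)(\nu):=\int_\R\overline{\psi_\nu(u)}f(u)\,du$ and show that it extends to a unitary isomorphism $L^2(\R)\xrightarrow{\sim}L^2(\R_+,d\mu(\nu))$ with Plancherel weight $d\mu(\nu)\propto|G_b(Q+2\bi\nu)|^{-2}d\nu$, intertwining $H$ with multiplication by $e^{2\pi b\nu}+e^{-2\pi b\nu}$. The main obstacle is proving the unitarity of $\cU$, i.e.\ the Plancherel formula itself. This requires a delicate contour integration argument based on the Ramanujan-type $b$-beta integrals for $G_b$, together with evaluating $\int\psi_\nu(u)\overline{\psi_{\nu'}(u)}\,du$ as a delta distribution in $\nu-\nu'$ with the stated weight. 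Both pieces are technically involved but can be carried out using the analytic and combinatorial properties of $G_b$ developed in \cite{Ip1,Ka}.
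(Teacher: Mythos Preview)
The paper does not prove this lemma; it simply cites \cite{Ip1, Ka} and moves on. Your outline is in fact the strategy carried out in those references: one builds generalized eigenfunctions of $H$ as ratios of quantum dilogarithms, verifies the eigenvalue equation via the shift relation, and then proves a Plancherel formula with measure $d\mu(\nu)$ proportional to $|S_b(Q+2\bi\nu)|^{-2}d\nu$ (equivalently $4\sinh(2\pi b\nu)\sinh(2\pi b^{-1}\nu)\,d\nu$). So there is nothing to compare against in the present paper, and your plan matches the original sources.

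Two small cautions on details. First, check the sign in the translation: with $p=\frac{1}{2\pi\bi}\partial_u$ one has $e^{2\pi bp}f(u)=f(u-\bi b)$, not $f(u+\bi b)$, so the precise shifts in your ansatz and in the shift relation for $G_b$ need to be matched to whichever convention you fix. Second, the eigenfunction that actually works is not purely a ratio of two $G_b$'s but typically carries an extra Gaussian-exponential prefactor (or equivalently one uses a product rather than a ratio, depending on normalization of $G_b$); a bare ratio will generally not satisfy the three-term relation without that correction. These are bookkeeping issues rather than conceptual ones, and once resolved the rest of your outline goes through as in \cite{Ip1, Ka}.
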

As a Corollary, we have the following theorem, which is the starting point of the research program \cite{FI} of representation theory of split real quantum groups as a tool to construct new classes of braided tensor categories.

\begin{Thm}\cite{PT2} The class of positive representations $\cP_\l$ of $\cU_{q\til{q}}(\sl(2,\R))$ is closed under taking tensor product. We have the following decomposition of tensor products as direct integral
\Eq{\cP_\l \ox \cP_\mu = \int_{\R_+}^\o+ \cP_\nu d\mu(\nu).}
\end{Thm}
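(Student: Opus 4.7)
The plan is to combine the two preceding results — the unitary equivalence between $\Delta(\bC)$ and the Kashaev-type length operator $e^{2\pi bu}+e^{-2\pi bu}+e^{2\pi bp}$, and the explicit spectral decomposition of the latter — and then promote the spectral decomposition of the Casimir to a decomposition of the entire representation space. The conceptual reason this works is that a positive representation $\cP_\nu$ of $\cU_{q\til{q}}(\sl(2,\R))$ is completely characterized, up to unitary equivalence, by the scalar $C(\nu)=e^{2\pi b\nu}+e^{-2\pi b\nu}$ by which the Casimir acts; combined with the symmetry $\cP_\nu\simeq \cP_{-\nu}$, this means that knowing the spectral data of $\Delta(\bC)$ already pins down which positive representations can appear in the decomposition.

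First I would assemble the spectral picture. By the preceding Proposition, $\Delta(\bC)$ on $\cP_\l\ox \cP_\mu$ is unitarily equivalent to \eqref{kashaev-casimir}, and by the Lemma this operator admits the direct integral decomposition $\int_{\R_+}^{\o+}(e^{2\pi b\nu}+e^{-2\pi b\nu})\,d\mu(\nu)$ with quantum-dilogarithm spectral measure. Matching $C(\nu)=e^{2\pi b\nu}+e^{-2\pi b\nu}$ with the scalar from the previous Proposition identifies the spectrum of $\Delta(\bC)$ on $\cP_\l\ox \cP_\mu$ with the set of Casimir eigenvalues attained by $\{\cP_\nu:\nu\in\R_{\geq 0}\}$, and fixes the measure.

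The next step is to upgrade the scalar spectral decomposition to a decomposition of the Hilbert space as $\cU_{q\til{q}}(\sl(2,\R))$-modules. I would introduce a unitary transformation $U:\cP_\l\ox\cP_\mu\to \int^{\o+}_{\R_+}\cH_\nu\,d\mu(\nu)$ obtained by joint diagonalization of $\Delta(\bC)$ together with an operator commuting with all of $\Delta(\be),\Delta(\bf),\Delta(K)$ and the modular dual $\Delta(\til{\be}),\Delta(\til{\bf}),\Delta(\til K)$. Because $\bC$ generates the center of $\cU_q(\sl_2)$ and because the transcendental relations tie the modular dual generators to the original ones by functional calculus, such an operator is essentially provided by the spectral parameter of \eqref{kashaev-casimir} itself. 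The representation of $\cU_{q\til{q}}(\sl(2,\R))$ then acts fiberwise, and since the only irreducible positive representation on which $\bC$ acts as $C(\nu)$ is $\cP_\nu$, the classification forces $\cH_\nu\simeq \cP_\nu$.

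The main obstacle is the last identification of each fiber with a full positive representation $\cP_\nu$: the spectral theorem only produces abstract fibers of some multiplicity, and one must verify that the multiplicity is one and that the induced action is the correct one. Concretely, this is established by constructing explicit Clebsch–Gordan intertwining kernels in terms of the quantum dilogarithm that realize $U$, and checking directly that the images of $\Delta(\be),\Delta(\bf),\Delta(K)$ under $U$ agree fiberwise with the canonical form of Proposition \ref{canonicalsl2}. This harmonic-analytic computation, together with the modular covariance ensuring the analogous statement for the dual generators, is the technical core of \cite{PT2} and completes the proof.
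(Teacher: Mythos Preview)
Your proposal is correct and matches the paper's own treatment: the paper does not give an independent proof but presents the theorem as a corollary of the preceding Proposition (unitary equivalence of $\Delta(\bC)$ with the Kashaev length operator) and Lemma (its spectral decomposition), deferring the fiberwise identification to the Clebsch--Gordan analysis of \cite{PT2}. Your write-up makes explicit precisely the logical steps the paper leaves implicit, including the multiplicity-one issue and the role of the explicit intertwining kernels.
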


\section{Generalized Casimir operators}\label{sec:genCasimir}
The center of $\cU_q(\g)$ (in the sense of Definition \ref{abuse}) is known to be generated by rank $\g$ central elements. There are several ways to construct these generators \cite{Et, RTF, ZGB}. In this paper, we will use a modified version from \cite{KS} and construct the generators by taking certain quantum trace over the fundamental representations $V_k$ of $\cU_q(\g)$. It will be instructive to reproduce the construction here, since we need to switch the order of multiplication later, and slightly modify the quantum trace in order to incorporate the modular double in the positive setting.

\begin{Def} Let $u$ be an invertible element of $\cU_q(\g)$ such that $Ad(u) = S^2$ is the square of the antipode. Let $V$ be a finite dimensional representation of $\cU_q(\g)$. We denote the quantum trace of $x\in \cU_q(\g)$ by
\Eq{Tr|_{V}^q(x):=Tr|_V(xu\inv).}
\end{Def}

\begin{Thm}\label{central}The element 
\Eq{(1\ox Tr|_V^q)(RR_{21})}
belongs to the center of $\cU_q(\g)$, where $R$ is the universal $R$ matrix satisfying the braiding relation \eqref{br}.
\end{Thm}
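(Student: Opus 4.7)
The plan is to prove centrality in two stages: first establish an intertwining property for $RR_{21}$ at the level of $\cU_q\ox\cU_q$, and then show that applying the partial quantum trace converts this intertwining into genuine centrality in $\cU_q(\g)$.

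First I would show that $RR_{21}$ commutes with $\D'(X)=(\s\circ \D)(X)$ for every $X\in\cU_q(\g)$. Starting from the braiding relation \eqref{br}, namely $\D'(X)R=R\D(X)$, I apply the swap $\s$, which is an algebra automorphism of $\cU_q\ox\cU_q$, to obtain the companion identity $\D(X)R_{21}=R_{21}\D'(X)$. Chaining them yields
\Eqn{
\D'(X)\cdot RR_{21}\;=\;R\D(X)R_{21}\;=\;RR_{21}\cdot\D'(X),
}
as claimed.

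Writing $RR_{21}=\sum_k m_k\ox n_k$, the commutation becomes $\sum_k x_{(2)}m_k\ox x_{(1)}n_k=\sum_k m_k x_{(2)}\ox n_k x_{(1)}$ for every $x\in\cU_q(\g)$. Applying $\id\ox Tr|_V^q$ produces an identity in $\cU_q(\g)$. To convert this into the desired centrality $xC=Cx$ with $C:=\sum_k m_k\,Tr|_V^q(n_k)$, I would invoke two properties of the quantum trace: (i) the $S^{-2}$-twisted cyclicity $Tr|_V^q(ab)=Tr|_V^q(b\,S^{-2}(a))$, an immediate consequence of $uxu\inv=S^2(x)$ and the ordinary cyclicity of $Tr|_V$; and (ii) the right-adjoint invariance $Tr|_V^q\bigl(\sum S(y_{(1)})\,z\,y_{(2)}\bigr)=\e(y)\,Tr|_V^q(z)$, which follows by combining (i) with the identity $\sum S^2(y_{(2)})S(y_{(1)})=\e(y)$ (obtained by applying the antihomomorphism $S$ to $\sum y_{(1)}S(y_{(2)})=\e(y)$). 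Writing $x=\sum\e(x_{(1)})x_{(2)}=\sum x_{(1)}\e(x_{(2)})$ and expanding each counit via (ii) inside the trace, both $xC$ and $Cx$ can be recast, using $\D^{(3)}(x)=\sum x_{(1)}\ox x_{(2)}\ox x_{(3)}$, into a common three-fold form that is matched term by term by the original commutation, yielding $xC=Cx$.

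The main obstacle is the Sweedler-calculus bookkeeping that bridges the raw commutation to the desired centrality: one must verify that the $S^{-2}$-twist introduced by the cyclicity of $Tr|_V^q$ is exactly absorbed by the $S^2$-shift occurring when pushing elements past $u\inv$ via $yu\inv=u\inv S^2(y)$, so that the two sides reduce to the same 3-fold expression. Conceptually, this reflects the fact that $u\inv$ is, up to a central ribbon factor, the pivotal element for a ribbon structure on the category of $\cU_q(\g)$-modules, so $Tr|_V^q$ is the canonical categorical trace and its partial versions automatically preserve $\cU_q(\g)$-equivariance of morphisms. From this categorical perspective, $C$ acts as an intertwiner on every $\cU_q(\g)$-module, and applying this to the left regular representation forces $xC=Cx$ in $\cU_q(\g)$.
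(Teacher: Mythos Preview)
Your proposal is correct and follows essentially the same strategy as the paper: establish that $RR_{21}$ commutes with $\D'$, record the $S^{-2}$-twisted cyclicity of $Tr|_V^q$, and finish with a Sweedler computation. You even supply the derivation of $[RR_{21},\D'(X)]=0$ from the braiding relation, which the paper simply asserts.

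The only real difference is in how the final bookkeeping is organized. Where you sketch an argument via the right-adjoint invariance of the trace and a ``common three-fold form'' (and then flag this as the main obstacle), the paper packages the whole thing into one lemma,
\[
\sum(1\ox S(a_{(1)}))\cdot\D'(a_{(2)})\;=\;a\ox 1\;=\;\sum\D'(a_{(2)})\cdot(1\ox S\inv(a_{(1)})),
\]
and the centrality then drops out in four lines: write $a\cdot C=(\mathrm{id}\ox Tr^q)((a\ox 1)b)$ with $b=RR_{21}$, insert the left identity above, use the twisted cyclicity $Tr^q(S(a_{(1)})\,\cdot\,)=Tr^q(\,\cdot\,S\inv(a_{(1)}))$ to move the factor to the right, use $b\D'=\D'b$ to push $b$ through, and apply the right identity to collapse back to $(\mathrm{id}\ox Tr^q)(b(a\ox 1))=C\cdot a$. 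This avoids any three-fold expansion and makes the $S^{-2}$ absorption you were worried about completely explicit; it would be worth replacing your handwavy final step with this lemma.
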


\begin{proof}  Let
\Eqn{
B &= \{b \in \cU_q(\g)\ox \cU_q(\g): b\D'(a) = \D'(a) b,\forall a\in \cU_q(\g)\},\\
\cI &= \{\l \in \cU_q(\g)^*: \l(ab) = \l(bS^{-2}(a)), \forall a,b\in \cU_q(\g)\}.\\
}

Then $RR_{21} \in B$, and 
$$Tr|_V^q(ab) = Tr|_V(abu\inv) = Tr|_V(bu\inv a)=Tr|_V(bS^{-2}(a)u\inv) = Tr|_V^q(bS^{-2}(a)),$$
hence $Tr|_V^q\in \cI$.
Finally we show that if $b\in B, \l\in \cI$ then $(id\ox \l)(b)$ belongs to the center of $\cU_q(\g)$.

\begin{Lem}
Let $\D(a) = \sum a_{(1)}\ox a_{(2)}$. Then
\Eq{\sum(1\ox S(a_{(1)}))\cdot \D'(a_{(2)}) = a\ox 1 = \sum \D'(a_{(2)}) \cdot (1\ox S\inv(a_{(1)})).}
\end{Lem}
\begin{proof}
using $$(1\ox \D)\D(a) = (\D\ox 1)\D(a) = a_{(1)}\ox a_{(2)}\ox a_{(3)},$$
$$S(a_{(1)})a_{(2)} = \e(a)1,$$
and
$$a_{(2)}\ox \e(a_{(1)})1 = a\ox 1,$$
LHS equals:
\Eqn{&a_{(3)}\ox S(a_{(1)})a_{(2)}=a_{(2)} \ox \e(a_{(1)})1=a\ox 1
}
Similarly for RHS.
\end{proof}
Hence we have
\Eqn{ a\cdot (id\ox \l)(b) &=(id \ox \l)((a\ox 1)\cdot b)\\
&=(id\ox \l)(\sum(1\ox S(a_{(1)}))\cdot \D'(a_{(2)}) \cdot b)\\
\mbox{uses def. of $\cI$}&=(id\ox \l)(\sum \D'(a_{(2)})\cdot b \cdot (1\ox S\inv(a_{(1)}))) \\
\mbox{uses def. of $B$}&=(id\ox \l)(b\cdot \sum \D'(a_{(2)}) \cdot (1\ox S\inv(a_{(1)}))) \\
&=(id\ox \l)(b(a\ox 1))\\
&=(id\ox \l)(b) \cdot a.
}
\end{proof}

Finally, we construct the element $u$, which gives the antipode for both $\cU_q(\g_\R)$ and its modular double counterpart $\cU_{\til{q}}(\g_\R)$. 
\begin{Prop}\label{u} The element $u$ is given by
\Eq{u=K_{2\rho}\til{K}_{2\rho} :=: \prod_i \left(q_i^{2W_i}\right)^{\frac{Q_i}{b_i}},}
where $\rho$ is the sum of fundamental weights and $W_i$ the fundamental coweights given in Definition \ref{rho}.
\end{Prop}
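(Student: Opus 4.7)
The plan is to verify $\mathrm{Ad}(u)=S^2$ on each set of generators by direct computation. From the Hopf structure listed above, $S^2(E_i)=q_i^2 E_i$, $S^2(F_i)=q_i^{-2}F_i$, $S^2(K_i)=K_i$, and the analogous computation on the Langlands-dual side yields $\til{S}^2(\til{E}_i)=\til{q}_i^2\til{E}_i$, $\til{S}^2(\til{F}_i)=\til{q}_i^{-2}\til{F}_i$, $\til{S}^2(\til{K}_i)=\til{K}_i$. It therefore suffices to check that the single element $u=\prod_i (q_i^{2W_i})^{Q_i/b_i}$ implements each of these scalars under conjugation.

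The key observation is that $q_i^{2W_i}=\prod_k q_i^{2(A\inv)_{ik}H_k}$ acts on a weight vector of weight $\mu$ by the scalar $q_i^{2\<\mu,W_i\>}$; in particular, the duality $\<\a_j,W_i\>=\sum_k(A\inv)_{ik}a_{kj}=\d_{ij}$ shows that $q_i^{2W_i}$ commutes with $E_j$ up to $q_i^{2\d_{ij}}$. Raising to the exponent $Q_i/b_i$ and taking the product over $i$ collapses to a single nontrivial factor:
\Eqn{uE_ju\inv = q_j^{2Q_j/b_j}\,E_j = e^{2\pi\bi b_j Q_j}\,E_j = e^{2\pi\bi(b_j^2+1)}\,E_j = q_j^2\,E_j,}
matching $S^2(E_j)$. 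The sign-flipped computation gives $uF_ju\inv=q_j^{-2}F_j$, and $u$ manifestly commutes with each $K_j$. Hence $\mathrm{Ad}(u)=S^2$ on $\cU_q(\g_\R)$.

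For the modular dual, the transcendental generator $\til{\be}_j=\be_j^{1/b_j^2}$ is defined by functional calculus on the positive essentially self-adjoint operator $\be_j$, so the Cartan commutation extends to $q_i^{H_k}\til{E}_jq_i^{-H_k}=q_i^{a_{kj}/b_j^2}\til{E}_j$. Thus $q_i^{2W_i}$ acts on $\til{E}_j$ by $q_i^{2\d_{ij}/b_j^2}$, and repeating the calculation,
\Eqn{u\til{E}_ju\inv = q_j^{2Q_j/(b_jb_j^2)}\til{E}_j = e^{2\pi\bi Q_j/b_j}\til{E}_j = e^{2\pi\bi(1+b_j^{-2})}\til{E}_j = \til{q}_j^2\til{E}_j,}
and analogously $u\til{F}_ju\inv=\til{q}_j^{-2}\til{F}_j$, while all $\til{K}_j$ commute with $u$. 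So the same $u$ implements both $S^2$ and $\til{S}^2$ simultaneously, proving the proposition. The main obstacle in making this rigorous is the functional-calculus step $K^c X^\a K^{-c}=q^{c\a a}X^\a$ used for the transcendental generators; it relies on the positivity and essential self-adjointness established in Theorem \ref{FKaction} to extend the Weyl-type commutation relations to arbitrary real powers, and on checking that the formal infinite product defining $u$ is densely defined and positive on the core of $\cP_\l$.
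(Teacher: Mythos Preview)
Your proposal is correct and follows essentially the same approach as the paper: both verify $\mathrm{Ad}(u)=S^2$ on the Chevalley generators by using the duality $\sum_k (A^{-1})_{ik}a_{kj}=\delta_{ij}$ to reduce $uE_ju^{-1}$ to the single scalar $q_j^{2Q_j/b_j}=q_j^2$, then pass to the modular-double generators by the $b_j^{-2}$-power. The paper handles the latter step purely formally (``taking the power $\tfrac{1}{b_j^2}$ on both sides''), so your closing caveat about the functional-calculus justification, while reasonable, goes beyond what the paper actually does.
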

\begin{proof} It suffices to check this on the generators. We check this for $E_j$. It is clear that the $\til{q}$ part commute with $E_j$. Since $W_i = \sum_k (A\inv)_{ik}H_k$, we have
$$[W_i, E_j] =  \sum_k (A\inv)_{ik}[H_k, E_j] =\sum_k (A\inv)_{ik}a_{kj} E_j = \d_{ij}E_j.$$
Hence
\Eqn{uE_j u\inv &=\prod_i\left( q_i^{2W_i}\right)^{\frac{Q_i}{b_i}} E_j \prod_i\left( q_i^{2W_i}\right)^{-\frac{Q_i}{b_i}}=q_j^{\frac{2Q_j}{b_j}}E_j=q_j^2 E_j=S^2(E_j).
}
The proof for $F_j$ is the similar. Taking the power $\frac{1}{b_j^2}$ on both sides proves the case for the modular double counterpart $\til{E_j}, \til{F_j}$:
\Eqn{u\til{E_j} u\inv &=\left(q_j^{\frac{2Q_j}{b_j}}\right)^{\frac{1}{b_j^2}}\til{E_j}=q_j^{\frac{2}{b_j^2}+\frac{2}{b_j^4}}\til{E_j}=\til{q_j}^2 \til{E_j}=S^2(\til{E_j}).
}
\end{proof}

\begin{Thm} The center of $\cU_q(\g)$ is spanned by the $n$ generalized Casimir elements
\Eq{(1\ox Tr|_{V_k}^q)(RR_{21}),}
where $V_k$, $k=1,...,n$ is the $k$-th fundamental representation of $\cU_q(\g)$, and the quantum trace $Tr^q$ is taken with respect to $u$ defined in Proposition \ref{u}.
\end{Thm}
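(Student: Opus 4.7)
The plan is to use the quantum Harish-Chandra isomorphism to identify each generalized Casimir $\bC_k := (1\ox Tr|_{V_k}^q)(RR_{21})$ with the Weyl-invariant character of the $k$-th fundamental representation, and then invoke the classical fact that these characters freely generate the Weyl-invariant part of the group algebra of the weight lattice. Centrality of each $\bC_k$ is already given by Theorem \ref{central}, so the remaining task is to show that the $n$ elements $\bC_1,\ldots,\bC_n$ generate the center $Z(\cU_q(\g))$ as an algebra (the statement's ``spanned'' here being understood as algebraically generated).

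First, I would invoke the Harish-Chandra isomorphism for $\cU_q(\g)$: using the triangular decomposition $\cU_q(\g) \cong \cU_q(\fn^-)\ox\cU_q(\fh)\ox\cU_q(\fn^+)$, the projection of a central element onto its $\cU_q(\fh)$-component composed with the $\rho$-shift yields an isomorphism $Z(\cU_q(\g)) \cong \C[q^{\pm P}]^W$ onto the Weyl-invariant group algebra of the weight lattice. This is the standard result of Rosso and Tanisaki; its proof parallels the classical Harish-Chandra argument, with Weyl-invariance forced by centrality applied to Verma modules of highest weights $\mu$ and $s_i\cdot \mu$, and injectivity following from the generic semisimplicity of Verma modules in the $q$-setting.

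Next, I would compute the Harish-Chandra image of $\bC_k$. From the factorization \eqref{RRn}, $R = \bQ^{1/2}\cdot(\mbox{unipotent terms})\cdot\bQ^{1/2}$ with Cartan part $\bQ = \prod_i q_i^{H_i\ox W_i}$, so that the pure Cartan-to-Cartan contribution of $RR_{21}$ is $\bQ^2$, while every remaining term in the expansion of $RR_{21}$ contains a nontrivial $\cU_q(\fn^\pm)$-component in the \emph{first} tensor factor. Applying $(1\ox Tr|_{V_k}^q)$, these off-diagonal terms project to zero under Harish-Chandra, while the Cartan piece, after summing the quantum trace $Tr(\,\cdot\,u^{-1})$ over the weight-space decomposition of $V_k$, evaluates to the character $\chi_{V_k}$ on the weight lattice, with the canonical $\rho$-shift supplied precisely by the twist $u^{-1} = K_{-2\rho}\til K_{-2\rho}$ built into the quantum trace.

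The argument concludes with the classical Chevalley-type theorem that the fundamental characters $\chi_{V_1},\ldots,\chi_{V_n}$ freely generate $\C[q^{\pm P}]^W$ as a polynomial algebra; combined with the Harish-Chandra isomorphism, this shows that $\bC_1,\ldots,\bC_n$ generate $Z(\cU_q(\g))$. The main obstacle I anticipate is the Harish-Chandra computation itself: verifying rigorously that the non-Cartan contributions to $RR_{21}$ -- coming from the infinite-sum expansions of the $\what{R}(i_l|s_{i_1}\ldots s_{i_{l-1}})$ factors in \eqref{RRnexp} -- all contain positive- or negative-root components in the \emph{first} slot and thus project to zero, and tracking the $\rho$-shift from $u^{-1}$ so that the image is genuinely $\chi_{V_k}$ rather than another polynomial of the same filtration degree. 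A secondary wrinkle is the modular-double factor $\til K_{2\rho}$ in $u$: since $V_k$ is a $\cU_q(\g)$-module, this factor must be interpreted (or shown to act as a specific scalar on each weight space of $V_k$, via the formal power $K^{Q_i/b_i}$) so that the identification with the standard Reshetikhin-Turaev Casimir is unambiguous.
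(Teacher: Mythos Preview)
The paper does not actually prove this theorem. After Theorem~\ref{central} (centrality of $(1\ox Tr|_V^q)(RR_{21})$) and Proposition~\ref{u} (construction of $u$), the statement you are asked to prove is simply asserted, with the implicit justification being the reference to \cite{KS} made at the start of Section~\ref{sec:genCasimir}. So there is no ``paper's own proof'' to compare against; the result is treated as known background.

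Your proposal is the standard argument and is correct in outline: the quantum Harish--Chandra isomorphism identifies $Z(\cU_q(\g))$ with $W$-invariants in the group algebra of the weight lattice, the Harish--Chandra projection of $\bC_k$ is (up to the $\rho$-shift from $u^{-1}$) the character $\chi_{V_k}$, and the fundamental characters freely generate the invariants. This is exactly the line taken in \cite{KS} and elsewhere. The two concerns you raise are real but manageable. For the non-Cartan terms: each $\what R(i_l|\cdots)$ in \eqref{RRnexp} has the form $\sum_{m\geq 0} c_m E_\a^m\ox F_\a^m$, so every term in $RR_{21}$ other than $\bQ^2$ carries a nontrivial $E$- or $F$-factor in the first slot and dies under the Harish--Chandra projection. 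For the modular-double factor $\til K_{2\rho}$: on a weight space of $V_k$ where $W_i$ acts by the integer $\w_i$, the extra piece $\prod_i q_i^{2W_i/b_i^2}$ contributes $\prod_i e^{2\pi i\w_i}$, i.e.\ a root of unity independent of $q$; this is precisely the sign bookkeeping the paper later absorbs in Section~\ref{sec:cc} when computing the central characters, and it does not affect algebraic independence of the $\bC_k$.
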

\section{Virtual highest and lowest weights}\label{sec:virtual}
In classical theory, in order to compute the central characters, i.e., the image of the Casimirs on finite dimensional highest weight representation, one writes the Casimir operator in the form of an element in $\cU(\fh)\o+ \cU(\fn^-)\ox\cU(\fn^+)$. By applying the operator to a highest weight vector, only the Cartan part matters, and the eigenvalues can be computed from the weights themselves.

In the case of positive representations, the situation is unfortunately more complicated. The positive representations $\cP_\l\simeq L^2(\R^N)$, $N=\dim l(w_0)$ is infinite dimensional, and there are no highest weight vectors to work with. However, since the representation is irreducible, the Casimir operators still acts as scalars. We have seen in Section \ref{sec:sl2} that $\bC$ acts as a positive scalar in the case of $\cU_{q\til{q}}(\sl(2,\R))$ by computing its action directly. 

In this section, we introduce the notion of \emph{virtual highest/lowest weight vectors} and the corresponding \emph{virtual highest/lowest weights}. From Theorem \ref{FKaction}, we see that $K_i$ acts as multiplication operators on $\cP_\l$. Hence formally its ``eigenvectors" are of the form of generalized functions
\Eq{\d(\vec[x]-\vec[c]):=\prod_{i=1}^N \d(x_i-c_i).} 

One may think of these as distributions on certain subsets $\cW\subset \cP_\l$ of the representation space. One particular choice of $\cW$ is the standard core of our unbounded operators, which is given by the completion of $N$ copies of $\cW\subset L^2(\R)$ where
\Eq{\cW = span \{e^{-\a x^2-\b x}P(x)|\a>0,\b\in\C, P(x)=\mbox{polynomials in $x$}\}.}

Now if we write the actions from Theorem \ref{FKaction} in a normal ordering, i.e. in terms of Laurent polynomials in $U_i$ and $V_i$ where the position operators $U_i=e^{\pi bv_i}$ appears to the right of the momentum (shifting) operators $V_i=e^{\pi b p_i}$, then the (transpose) action on $\d(\vec[x]-\vec[c])$ means substituting $U_i$ by the corresponding numbers. 

\begin{Def}
We call $\vec[c]$ the \emph{point of virtual highest (resp. lowest) weight} if both the actions of $\be_i$ (resp. $\bf_i$) and its modular double counterpart $\til{\be_i}$ (resp. $\til{\bf_i}$) vanishes on $\d(\vec[x]-\vec[c])$ for every $i\in I$.

Let $K_i$ acts as the scalar $e^{2\pi \bi b_i \L_i}$ on $\d(\vec[x]-\vec[c])$. Then $\til{K_i}$ acts as $e^{2\pi \bi b_i\inv \L_i}$. We call $\vec[\L]:=(\L_1,...,\L_n)$ the \emph{virtual highest (resp. lowest) weight} of the positive representation $\cP_\l$ of the modular double $\cU_{q\til{q}}(\g_\R)$.
\end{Def}

We will see that each positive representation $\cP_\l$ has a unique virtual highest/lowest weight. Since we have an explicit formula for the action of $\bf_i$, in this section we will focus on calculating the virtual \emph{lowest} weight vectors.

\begin{Lem}\label{zero} Let $Q_i :=b_i+b_i\inv$. Recall the notations from Definition \ref{usul}. Assume $[u_s+u_l,p_i]=\frac{1}{2\pi \bi}$. Both 
$$[-u_s-u_l]e(2p_i):=e^{\pi (b_su_s+b_lu_l+2b_ip_i)}+e^{\pi (-b_su_s-b_lu_l+2b_ip_i)}$$ and its modular double counterpart (cf. \cite{Ip2,Ip3})
\Eqn{
&\left(e^{\pi (b_su_s+b_lu_l+2b_ip_i)}\right)^{\frac{1}{b_i^2}}+\left(e^{\pi (-b_su_s-b_lu_l+2b_ip_i)}\right)^{\frac{1}{b_i^2}}\\
&=e^{\pi (b_sb_i^{-2} u_s+b_lb_i^{-2} u_l+2b_i\inv p_i)}+e^{\pi (-b_sb_i^{-2} u_s-b_lb_i^{-2} u_l+2b_i\inv p_i)}
}
acts as zero on $\d(b_su_s+b_lu_l+\frac{\bi Q_ib_i}{2})$, and this is a unique solution up to scalar multiples.
\end{Lem}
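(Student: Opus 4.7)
The plan is a direct computation: set $U := b_s u_s + b_l u_l$ as a single ``composite position'' variable and apply each of the two exponentials in the operator to the distribution $\delta(U-c)$, seeking the unique $c$ (up to normalization) that makes both contributions cancel. Because $p_i$ is conjugate to a single coordinate appearing with a definite coefficient in $u_s + u_l$, the commutator $[U, p_i]$ is a purely imaginary scalar of modulus $\frac{b_i}{2\pi}$, and in particular is central; the Baker-Campbell-Hausdorff identity therefore factorizes
\[
e^{\pi(\pm U + 2 b_i p_i)} \;=\; \alpha_\pm\, e^{\pm\pi U}\, e^{2\pi b_i p_i},\qquad \alpha_\pm = e^{\mp \bi \pi b_i^2 /2}.
\]

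Next, the operator $e^{2\pi b_i p_i}$ acts as a translation of $U$ by an imaginary amount proportional to $b_i^2$, so both terms of the operator send the support of $\delta(U-c)$ to the \emph{same} translated location $c + \bi b_i^2$. Subsequent multiplication by $e^{\pm\pi U}$ merely evaluates the scalar $e^{\pm\pi(c + \bi b_i^2)}$ on the delta. The two terms therefore collapse to a single delta with total coefficient $\alpha_+ e^{\pi(c + \bi b_i^2)} + \alpha_- e^{-\pi(c + \bi b_i^2)}$, which vanishes iff
\[
e^{2\pi c + \bi \pi b_i^2} = -1.
\]
The general solution is $c = \tfrac{\bi}{2}(2k+1 - b_i^2)$ with $k \in \Z$; the choice $k = -1$ gives $c = -\tfrac{\bi(1+b_i^2)}{2} = -\tfrac{\bi b_i Q_i}{2}$, using $b_i Q_i = b_i^2 + 1$.

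For the modular double counterpart, the identical computation applies with $b_i \mapsto b_i\inv$ and the composite position rescaled to $b_i^{-2} U$, yielding in terms of the original variable $U$ the family $c = \tfrac{\bi}{2}(b_i^2 (2m+1) - 1)$, $m \in \Z$, which for $m = -1$ again produces $c = -\tfrac{\bi b_i Q_i}{2}$. Simultaneous annihilation by both operators requires $c$ to lie in the intersection of the two arithmetic progressions, i.e.\ $k + 1 = (m+1) b_i^2$; since $b^2 \in \R\setminus\Q$ forces $b_i^2 \notin \Q$, the only integer solution is $k = m = -1$, isolating the claimed center up to overall scalar multiple.

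The main obstacle is bookkeeping of signs: the direction of translation generated by $e^{2\pi b_i p_i}$, the sign of the BCH correction $\alpha_\pm$, and the rescaling in the modular double step must all line up so that the two contributions cancel rather than reinforce. Once the commutation conventions of Section \ref{sec:prelim:pos} are applied consistently, the cancellation is forced, and the uniqueness statement reduces to the irrationality of $b_i^2$.
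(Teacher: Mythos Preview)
Your proof is correct and follows essentially the same route as the paper: normal-order the two exponentials via BCH, reduce the vanishing condition to the algebraic equation $e^{2\pi c + \pi\bi b_i^2}=-1$, repeat for the modular double, and intersect the two arithmetic progressions using the irrationality of $b_i^2$. The only cosmetic difference is that the paper factors the shift $e^{2\pi b_i p_i}$ to the \emph{left} (so the multiplication part $q_i^{1/2}e^{\pi U}+q_i^{-1/2}e^{-\pi U}$ is evaluated directly at $U=c$), whereas you factor it to the right and absorb the resulting translation of the support into the evaluation point; the two orderings produce the identical condition.
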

\begin{proof}
Rewrite the multiplication operator to the right, we have
\Eqn{[-u_s-u_l]e(2p_i):&=e^{\pi (b_su_s+b_lu_l+2b_ip_i)}+e^{\pi (-b_su_s-b_lu_l+2b_ip_i)}\\
&=q_i^\half e^{2\pi b_ip_i}e^{\pi b_su_s+\pi b_lu_l}+q_i^{-\half}e^{2\pi b_ip_i}e^{-\pi b_su_s-\pi b_lu_l}\\
&=e^{2\pi b_ip_i}\left(q_i^\half e^{\pi b_su_s+\pi b_lu_l}+q_i^{-\half}e^{-\pi b_su_s-\pi b_lu_l}\right).
}
Hence the action is zero if and only if
\Eqn{&0=q_i^\half e^{\pi b_su_s+\pi b_lu_l}+q_i^{-\half}e^{-\pi b_su_s-\pi b_lu_l}\\
\iff &q_ie^{2\pi b_su_s+2\pi b_lu_l}=-1\\
\iff &e^{2\pi b_su_s+2\pi b_lu_l+\pi \bi b_i^2} = -1\\
\iff &2\pi b_su_s+2\pi b_lu_l+\pi ib_i^2 = -\pi \bi+2k\pi \bi,\tab k\in\Z\\
\iff &b_su_s+b_lu_l = -\frac{\bi Q_ib_i}{2}+k\bi,\tab k\in\Z.
}
Similarly, the action of the modular double counterpart is zero if and only if
$$b_su_s+b_lu_l = -\frac{\bi Q_ib_i}{2}+k\bi b_i^2,\tab k\in\Z.$$
Hence the only solution that works for both case is when $k=0$, hence $\d(b_su_s+b_lu_l+\frac{\bi Q_ib_i}{2})$ is the unique solution.
\end{proof}

\begin{Cor} In the simply-laced case, $[-u]e(2p)$ and its modular double counterpart act as zero on $\d(u+\frac{\bi Q}{2})$.
\end{Cor}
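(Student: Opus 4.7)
The plan is simply to specialize Lemma \ref{zero} to the simply-laced setting. In that case every simple root is long, so the conventions of Definition \ref{qi} give $b_i = b_l = b$ for every $i$, there is no short-root contribution, and $Q_i = b_i + b_i^{-1} = Q$. I would therefore take $u_s = 0$, $u_l = u$, and $b_i = b$ in the statement of Lemma \ref{zero}.

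With these substitutions, the operator $[-u_s - u_l]e(2p_i)$ becomes exactly $[-u]e(2p)$, and the modular double counterpart collapses to the expression obtained by replacing $b$ with $b^{-1}$. Lemma \ref{zero} then tells us that both operators annihilate $\delta\!\left(b u + \tfrac{\bi Q b}{2}\right)$, which, by pulling out the nonzero constant $b$ from the argument of the $\delta$-distribution, coincides up to normalization with $\delta\!\left(u + \tfrac{\bi Q}{2}\right)$. Uniqueness (up to scalar multiples) follows from the uniqueness part of the lemma.

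No further work is required, so there is no genuine obstacle beyond bookkeeping the normalizations; the only thing to be careful about is the rescaling in the $\delta$-distribution when removing the overall factor of $b$, which is a harmless change of scalar.
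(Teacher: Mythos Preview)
Your proposal is correct and matches the paper's treatment: the corollary is stated immediately after Lemma \ref{zero} with no separate proof, so the intended argument is precisely the specialization you describe (set $u_s=0$, $u_l=u$, $b_i=b$, $Q_i=Q$, and absorb the factor of $b$ in the delta distribution).
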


Next, we need the following result, which describes a general combinatorics of the positive roots. Let $(i_1,...,i_t)$ be any sequence of the root index, and $s_i$ the corresponding reflections.
\begin{Prop}\label{root-com} We have
\Eq{
s_{i_1}s_{i_2}...s_{i_t}(\a_k) = \a_j-\sum_{j=1}^t a_{i_jk}s_{i_1}s_{i_2}...s_{i_{j-1}}(\a_{i_j}),
}
where $\a_k$ is a positive simple root, and $a_{ij}$ the Cartan matrix elements.
\end{Prop}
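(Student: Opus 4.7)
The identity is a clean combinatorial statement, and the natural approach is induction on the length $t$ of the sequence.

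The plan is to proceed by induction on $t$, using the basic reflection rule $s_i(\alpha_k) = \alpha_k - a_{ik}\alpha_i$ stated earlier in the excerpt. Write $w_t := s_{i_1}s_{i_2}\cdots s_{i_t}$, with the convention $w_0 = \mathrm{id}$, so the goal is
\begin{equation*}
w_t(\alpha_k) = \alpha_k - \sum_{j=1}^t a_{i_j k}\, w_{j-1}(\alpha_{i_j}).
\end{equation*}
The base case $t=1$ is immediate, since $w_0 = \mathrm{id}$ and $w_1(\alpha_k) = s_{i_1}(\alpha_k) = \alpha_k - a_{i_1 k}\alpha_{i_1}$, which matches the right-hand side.

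For the inductive step, assume the formula holds for $t-1$. First apply $s_{i_t}$ to $\alpha_k$ inside, obtaining $s_{i_t}(\alpha_k) = \alpha_k - a_{i_t k}\alpha_{i_t}$, and then apply $w_{t-1}$ by linearity, giving
\begin{equation*}
w_t(\alpha_k) = w_{t-1}(\alpha_k) - a_{i_t k}\, w_{t-1}(\alpha_{i_t}).
\end{equation*}
Substituting the induction hypothesis for $w_{t-1}(\alpha_k)$ and combining the extra $j=t$ term with the truncated sum yields exactly the desired expression. Since the Weyl group acts by linear transformations on $\fh_\R^*$ and the only facts used are the simple reflection formula and linearity, no case analysis or appeal to root-system specifics is needed.

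There is no real obstacle here; the only mild subtlety is organising the indexing conventions so that the empty product $w_0$ gives the identity and the $j=t$ term splits off cleanly. (We also read the displayed right-hand side with $\alpha_k$ in place of the printed $\alpha_j$, treating the latter as a typographical slip, since otherwise the $t=1$ case already fails.)
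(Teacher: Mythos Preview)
Your proof is correct and follows essentially the same approach as the paper: both argue by induction on $t$, split off the $j=t$ term, and use the reflection formula $s_{i_t}(\alpha_k)=\alpha_k-a_{i_tk}\alpha_{i_t}$ together with linearity of $w_{t-1}$. The only cosmetic differences are that the paper starts the induction at $t=0$ and manipulates the right-hand side toward the left, whereas you start at $t=1$ and work in the opposite direction; your observation that the printed $\alpha_j$ should read $\alpha_k$ is also correct.
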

\begin{proof} We use induction on $t$. The case for $t=0$ is trivial. Assume it holds for $t-1$. Then we have
\Eqn{
&\a_{k}-\sum_{j=1}^t a_{i_jk}s_{i_1}s_{i_2}...s_{i_{j-1}}(\a_{i_j})\\
&=\a_k-\sum_{j=1}^{t-1} a_{i_jk}s_{i_1}s_{i_2}...s_{i_{j-1}}(\a_{i_j}) - a_{i_tk}s_{i_1}...s_{i_{t-1}}(\a_{i_t})\\
&=s_{i_1}s_{i_2}...s_{i_{t-1}}(\a_k)-a_{i_tk}s_{i_1}s_{i_2}...s_{i_{t-1}}(\a_{i_t})\\ 
&=s_{i_1}s_{i_2}...s_{i_{t-1}}(\a_k-a_{i_tk}\a_{i_t})\\ 
&=s_{i_1}s_{i_2}...s_{i_t}(\a_k). 
}
\end{proof}
Recall from \eqref{lb}
$$\vec[\l_b]=\sum \l_ib_i W_i\in \fh_\R ,$$
where $\l=(\l_1,...,\l_n)$ are the parameters of $\cP_\l$, and $W_i$ are the fundamental coweights dual to the positive simple roots (cf. Definition \ref{rho}). We use the variables $v_i$ of the space $\cP_\l$ (cf. Definition \ref{variables}).

\begin{Thm} Fix the positive representation $\cP_\l$ corresponding to the longest element $w_0=s_{i_1}...s_{i_N}$ of the Weyl group. Under the action of Theorem \ref{FKaction}, the point of virtual lowest weight is given by
\Eq{\label{vj}v_j = \frac{1}{b_{i_j}}\left(-\vec[\frac{\bi Q}{2}]-2\vec[\l_b], s_{i_1}...s_{i_{j-1}}(\a_{i_j})\right),}
where 
$$\vec[\frac{\bi Q}{2}]=\sum_i \frac{\bi Q_ib_i}{2}W_i.$$
\end{Thm}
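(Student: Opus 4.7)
The plan is to translate the vanishing conditions for $\bf_i$ and its modular dual $\til{\bf_i}$ on the distribution $\d(\vec[x]-\vec[c])$ into a linear recursion for the coordinates $v_m$, and then identify the resulting recursion with \eqref{vj} by pairing Proposition \ref{root-com} against a suitable weight.

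First, I would unpack the action \eqref{FF}. For each root index $i$ and each occurrence $k$, the corresponding summand of $\bf_i$ has the form $[\cdots]e(2p_i^k)$, with the shift $e(2p_i^k)$ acting only on the single variable $u_i^k=v_{v(i,k)}$. Since $(i,k)\mapsto v(i,k)$ is a bijection onto $\{1,\dots,N\}$, distinct summands shift disjoint coordinates; their images of $\d(\vec[x]-\vec[c])$ are therefore shifted deltas supported at pairwise distinct points, hence linearly independent as distributions. Consequently $\bf_i\cdot\d(\vec[x]-\vec[c])=0$ if and only if each summand annihilates it individually, and the same goes for $\til{\bf_i}$.

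Next I would apply Lemma \ref{zero} term by term. Setting $m:=v(i,k)$, so that $i=i_m$ and $u_i^k=v_m$, the $k$-th summand of $\bf_{i_m}$ has bracket argument $-\sum_{j=1}^{m-1}a_{i_j,i_m}v_j-v_m-2\l_{i_m}$; once exponentiated according to Definition \ref{usul}, each variable is weighted by its associated parameter $b_{i_j}$. Lemma \ref{zero} applied simultaneously to $\bf_{i_m}$ and its modular double counterpart is then equivalent to the single scalar equation
\Eqn{b_{i_m}v_m+\sum_{j=1}^{m-1}a_{i_j,i_m}b_{i_j}v_j+2b_{i_m}\l_{i_m}=-\frac{\bi Q_{i_m}b_{i_m}}{2}.}
Setting $c_m:=b_{i_m}v_m$, this is the linear recursion
\Eqn{c_m=-\frac{\bi Q_{i_m}b_{i_m}}{2}-2b_{i_m}\l_{i_m}-\sum_{j=1}^{m-1}a_{i_j,i_m}c_j,}
which uniquely determines $c_m$ (hence $v_m$) from $c_1,\dots,c_{m-1}$; in particular the virtual lowest weight point, if it exists, is automatically unique.

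Finally I would match this recursion with \eqref{vj}. Write $\mu:=-\vec[\frac{\bi Q}{2}]-2\vec[\l_b]\in\fh_\R$. The identity $(W_i,\a_j)=\d_{ij}$, which is immediate from Definition \ref{rho} together with $a_{jk}=(\a_j,\a_k)/d_j$, yields
\Eqn{(\mu,\a_{i_m})=-\frac{\bi Q_{i_m}b_{i_m}}{2}-2b_{i_m}\l_{i_m},}
so the recursion above rewrites as $c_m=(\mu,\a_{i_m})-\sum_{j=1}^{m-1}a_{i_j,i_m}c_j$. Pairing Proposition \ref{root-com} with $\mu$ gives
\Eqn{(\mu,s_{i_1}\cdots s_{i_{m-1}}(\a_{i_m}))=(\mu,\a_{i_m})-\sum_{j=1}^{m-1}a_{i_j,i_m}(\mu,s_{i_1}\cdots s_{i_{j-1}}(\a_{i_j})),}
which is exactly the same recursion. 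An induction on $m$ therefore identifies $c_m=(\mu,s_{i_1}\cdots s_{i_{m-1}}(\a_{i_m}))$, and dividing through by $b_{i_m}$ produces the formula \eqref{vj}. The only delicate step is careful bookkeeping of the short/long scaling factors $b_{i_j}$ when translating Definition \ref{usul}'s abstract bracket into its explicit exponential form; once that is made precise, the remaining argument is a mechanical combination of Lemma \ref{zero} and Proposition \ref{root-com}.
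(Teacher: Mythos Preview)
Your proposal is correct and follows essentially the same route as the paper: both reduce the vanishing of $\bf_i$ and $\til{\bf_i}$ via Lemma \ref{zero} to the linear system $\sum_{j=1}^{m-1}a_{i_j,i_m}b_{i_j}v_j+b_{i_m}v_m+2b_{i_m}\l_{i_m}=-\frac{\bi Q_{i_m}b_{i_m}}{2}$, and then identify the unique solution by pairing Proposition \ref{root-com} against $\mu=-\vec[\frac{\bi Q}{2}]-2\vec[\l_b]$ and inducting on $m$. Your explicit linear-independence argument for the shifted delta distributions (justifying that each summand of $\bf_i$ must vanish separately) is a useful clarification that the paper omits, but otherwise the two proofs coincide.
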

\begin{proof} Recall that the action of $\bf_i$ is given by
$$\bf_i=\sum_{k=1}^m\left[-\sum_{j=1}^{v(i,k)-1} a_{i_j,i}v_j-u_i^k-2\l_i\right]e(2p_{i}^{k}),$$
where $u_i^k$ and $p_i^k$ are the ($k$-th) variables corresponding to the root $i$, $v(i,k)$ is the index of $v$ such that $v_{v(i,k)}=u_i^k$, and $k$ runs over all variables $p_i^k$ corresponding to the root $i$.

For simplicity, let us write $v(i,k)=t$. By Lemma \ref{zero}, it suffices to solve
$$\sum_{j=1}^{t-1} a_{i_ji_t}b_{i_j}v_j+b_{i_t}v_t+2b_{i_t}\l_{i_t}=-\frac{\bi Q_{i_t}b_{i_t}}{2}$$
for all $t$. We will do this by induction on the variables $v_j$.

The case of $v_1$ is easy, since there is only one term:
\Eqn{
b_{i_1}v_1+2b_{i_1}\l_{i_1}&=-\frac{\bi Q_{i_1}b_{i_1}}{2}\\
\iff v_1&=-\frac{\bi Q_{i_1}}{2}-2\l_{i_1}.
}
From Proposition \ref{root-com} substituting $k=i_t$ we have
$$ \a_{i_t}-\sum_{j=1}^{t-1} a_{i_j{i_t}}s_{i_1}s_{i_2}...s_{i_{j-1}}(\a_{i_j})=s_{i_1}s_{i_2}...s_{i_{t-1}}(\a_{i_t}).$$
Assume \eqref{vj} that the variables $v_j$ have been solved for $j<t$. Now take the pairing on the left hand side we get
\Eqn{
&\left(-\vec[\frac{\bi Q}{2}]-2\vec[\l_b],\tab \a_{i_t}-\sum_{j=1}^{t-1} a_{i_j{i_t}}s_{i_1}s_{i_2}...s_{i_{j-1}}(\a_{i_j})\right)\\
&=-\frac{Q_{i_t}b_{i_t}}{2}-2\l_{i_t}b_{i_t}-\sum_{j=1}^{t-1} a_{i_ji_t}b_{i_j}v_j\\
&=b_{i_t}v_t,
}
hence
$$v_t = \frac{1}{b_{i_t}}\left(-\vec[\frac{\bi Q}{2}]-2\vec[\l_b],s_{i_1}s_{i_2}...s_{i_{t-1}}(\a_{i_t})\right)$$
as required.
\end{proof}

\begin{Cor} $K_i$ acts on the point of virtual lowest weight by multiplication by 
\Eq{-q_ie^{2\pi b_i\l_{\s(i)}},}
where $\s$ is the unique involution on the Dynkin diagram corresponding to the action of the longest element $w_0$.
Hence the virtual lowest weight $\vec[\L]=(\L_1,...,\L_n)$ such that $K_i=e^{2\pi \bi b_i \L_i}$ is given by
\Eq{\L_i = \frac{Q_i}{2}-\bi \l_{\s(i)}.}
\end{Cor}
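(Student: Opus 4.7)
The plan is to substitute the explicit virtual lowest weight coordinates \eqref{vj} into the multiplication operator
$$K_i = e\Bigl(-\sum_{k=1}^{l(w_0)} a_{i_k,i}v_k - 2\l_i\Bigr)$$
of Theorem \ref{FKaction}, interpreted via Definition \ref{usul} so that each $v_k$ carries weight $b_{i_k}$ and $\l_i$ carries weight $b_i$. After the substitution, the sum in the exponent reassembles into a single pairing in $\fh_\R$ which collapses dramatically.

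The key observation is that specializing Proposition \ref{root-com} to the longest word $w_0=s_{i_1}\cdots s_{i_N}$ with $k=i$ gives
$$\sum_{k=1}^{N} a_{i_k, i}\, s_{i_1}\cdots s_{i_{k-1}}(\a_{i_k}) = \a_i - w_0(\a_i) = \a_i+\a_{\s(i)},$$
where I invoke the classical fact $w_0(\a_i)=-\a_{\s(i)}$ with $\s$ the Dynkin involution induced by $-w_0$; since $\s$ preserves root lengths, $b_{\s(i)}=b_i$ and $Q_{\s(i)}=Q_i$. Using this, the $v$-dependent part of the exponent becomes $-\pi\bigl(-\vec[\frac{\bi Q}{2}]-2\vec[\l_b],\, \a_i+\a_{\s(i)}\bigr)$.

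To evaluate the pairing I would use $(W_j,\a_k)=\d_{jk}$, which is immediate from $W_j=\sum_m(A\inv)_{jm}H_m$ and the definition of the Cartan matrix. This simplifies the pairing to $-\bi Q_i b_i - 2b_i(\l_i+\l_{\s(i)})$; combining with the $-2\pi b_i\l_i$ already present in the exponent, the $\l_i$ contributions cancel and the exponent reduces to $\pi\bi Q_i b_i+2\pi b_i\l_{\s(i)}$. Using $e^{\pi\bi b_i Q_i}=e^{\pi\bi b_i^2}e^{\pi\bi}=-q_i$ yields $K_i=-q_i e^{2\pi b_i\l_{\s(i)}}$, and solving $e^{2\pi\bi b_i\L_i}=-q_i e^{2\pi b_i\l_{\s(i)}}$ gives $\L_i=\tfrac{Q_i}{2}-\bi\l_{\s(i)}$. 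The only real subtlety is keeping the $b$-weights of Definition \ref{usul} straight, so that each $v_k$ contributes $b_{i_k}$ and the identity $(W_j,\a_k)=\d_{jk}$ applies cleanly; once the bookkeeping is in order, the argument is a direct unwinding.
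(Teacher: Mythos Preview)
Your proposal is correct and follows essentially the same route as the paper: substitute the virtual lowest weight coordinates \eqref{vj} into the formula \eqref{KK} for $K_i$, apply Proposition \ref{root-com} at $t=N$ to rewrite $\sum_k a_{i_ki}\,s_{i_1}\cdots s_{i_{k-1}}(\a_{i_k})=\a_i-w_0(\a_i)=\a_i+\a_{\s(i)}$, and then read off the pairing using $(W_j,\a_k)=\d_{jk}$ together with $b_{\s(i)}=b_i$, $Q_{\s(i)}=Q_i$. The paper presents the same computation, only phrased slightly differently by moving the $\a_i$ term to the left before pairing.
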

\begin{proof} Again from Proposition \ref{FKaction}, the action of $K_i$ is given by 
\Eqn{K_i &= e\left(-\sum_{k=1}^{N} a_{i_k,i}v_k-2\l_i\right)\\
&:=exp\left(-\pi \sum_{k=1}^N a_{i_k,i}b_{i_k}v_k-2\pi b_i\l_i\right).
}
By Proposition \ref{root-com}, substituting $k=i$ and take $t=N$, we have
$$ \a_i-\sum_{k=1}^{N} a_{i_ki}s_{i_1}s_{i_2}...s_{i_{k-1}}(\a_{i_k})=s_{i_1}s_{i_2}...s_{i_{N}}(\a_i)=-\a_{\s(i)}.$$
Now take the pairing with $-\vec[\frac{\bi Q}{2}]-2\vec[\l_b]$, we get
$$-\frac{\bi Q_ib_i}{2}-2b_i\l_i-\sum_{k=1}^N a_{i_k,i}b_{i_k}v_k = \frac{\bi Q_{\s(i)}b_{\s(i)}}{2}+2b_{\s(i)}\l_{\s(i)}.$$
Also note that $Q_i=Q_{\s(i)}$ and $b_i = b_{\s(i)}$. Hence the action of $K_i$ is given by
$$K_i = \exp(2\pi b_i\l_{\s(i)}+\pi \bi Q_ib_i)=-q_ie^{2\pi b_i\l_{\s(i)}},$$
and the virtual lowest weight $\L_i$ is given by
$$\L_i=\frac{Q_i}{2}-\bi\l_{\s(i)}.$$
\end{proof}
\begin{Rem} Although there is no universal formula for the action of $\be_i$, one can still solve for the point of virtual highest weights case by case from the explicit formula given in \cite{IpTh, Ip3}. In particular the point of virtual highest weight are linear combinations of $\frac{Q_i}{2}$ only. The virtual highest weight is then simply given by 
\Eq{\L_i = -\frac{Q_i}{2}+\bi\l_i.}
\end{Rem}
\section{Central characters}\label{sec:cc}
With the notion of virtual lowest weight, it is now easy to calculate the central characters. Let $V_k$ be the $k$-th fundamental representation of $\cU_q(\g)$. By Theorem \ref{central}, the generalized Casimir operator is given by
$$\bC_k = (1\ox Tr|_{V_k}^q)(RR_{21}) := (1\ox Tr|_{V_k})(RR_{21}(1\ox u\inv)),$$
with $u$ given by Proposition \ref{u}
$$u=\prod_i (q_i^{2W_i})^{\frac{Q_i}{b_i}}= \prod_iq_i^{2W_i}\prod_iq_i^{\frac{2W_i}{b_i^2}}.$$
From the explicit expression of the universal $R$ matrix \eqref{RRn}, it is clear that $\bC_i$ can be written in the form
\Eq{\bC_k = C_{K}+C_{EKF},}
where $C_K\in \cU_q(\fh)$ only depends on $K_i$, and $C_{EKF}\in \cU_q(\fn^+)\ox \cU_q(\fh)\ox \cU_q(\fn^-)$ is linear combinations of terms with $F$ acting on the right. Since the positive representation $\cP_\l$ is irreducible, $\bC_k$ acts as multiplication by a scalar. In particular, the action can be extended to the virtual lowest weight vector. Therefore the terms $C_{EKF}$ do not matter, and all we need to do is to calculate the action of the Cartan part:

\begin{Lem} The eigenvalue $C_k(\l)$ of $\bC_k$ on $\cP_\l$ is given by $(1\ox Tr|_{V_k})(\bQ^2(1\ox u\inv))$ specialized to the actions of $K_i$ on the virtual lowest vector
$$K_i = e^{\pi i}q_ie^{2\pi b_i\l_{\s(i)}}.$$ Recall
$$\bQ = \prod_{i=1}^n q_i^{H_i\ox W_i}, \tab W_i = \sum_{j=1}^n (A\inv)_{ij} H_j.$$
\end{Lem}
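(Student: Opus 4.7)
The strategy rests on the irreducibility of $\cP_\l$, which forces $\bC_k$ to act by a single scalar $C_k(\l)$. I would compute this scalar by extending the action to the virtual lowest weight distribution from Section \ref{sec:virtual}. Since the setup immediately before the Lemma has already decomposed $\bC_k=C_K+C_{EKF}$ in the $E\cdot K\cdot F$ PBW ordering, with $C_K\in\cU_q(\fh)$ and every monomial of $C_{EKF}$ carrying an $F$ on the right, Lemma \ref{zero} (together with its modular double counterpart) shows that $C_{EKF}$ annihilates the virtual lowest weight distribution. The problem thus reduces to evaluating $C_K$ under the substitution $K_i\mapsto -q_ie^{2\pi b_i\l_{\s(i)}}$ given in the preceding Corollary, and what remains is to identify
$$C_K = (1\ox Tr|_{V_k})(\bQ^2(1\ox u\inv)).$$

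For the identification, I would unpack the multiplicative formula \eqref{RRn}: in $R=\bQ^{1/2}\what R(i_N|\cdots)\cdots\what R(i_1)\bQ^{1/2}$, each factor $\what R(i_l|\cdots)$ lies in $1+\cU_q(\fn^+)_{>0}\hat{\otimes}\cU_q(\fn^-)_{>0}$, since the Lusztig twist (Proposition \ref{LuCox}) preserves $\cU_q(\fn^\pm)$. Extracting the purely $E$- and $F$-free part of $R$ therefore requires selecting the identity in every $\what R$, producing $\bQ^{1/2}\cdot \bQ^{1/2}=\bQ$. The analogous analysis applied to $R_{21}$ gives $\bQ$ again, using the symmetry $\bQ=\bQ_{21}$, which follows from the identity $d_i(A\inv)_{ij}=d_j(A\inv)_{ji}$. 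In the product $RR_{21}$, any non-identity contribution from an $\what R$-factor in $R$ (resp.\ $R_{21}$) drags an $E_\b$ (resp.\ $F_\b$) onto the first tensor slot; the resulting monomial $R^{(1)}R_{21}^{(1)}$, PBW-reordered in the $E\cdot K\cdot F$ convention, is an $EKF$-type element with a non-trivial $E$- or $F$-block, hence belongs to $C_{EKF}$. Consequently the purely Cartan part of $RR_{21}$ in $\cU_q(\fh)^{\ox 2}$ is exactly $\bQ^2$, and applying $(1\ox Tr|_{V_k})(\,\cdot\,(1\ox u\inv))$ yields the claimed formula.

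The main obstacle is the final PBW bookkeeping: one must check that moving Cartan factors past the $E_\a$'s and $F_\b$'s to reach the $E\cdot K\cdot F$ normal form does not leak new pure-Cartan terms. Crucially, in the chosen ordering a product $E_\a F_\b$ is already PBW-normal as an $EKF$-monomial with trivial $K$-block (no commutator $[E_i,F_i]=(K_i-K_i\inv)/(q_i-q_i\inv)$ needs to be invoked, unlike in the opposite $F\cdot K\cdot E$ convention), so no hidden Cartan contribution appears. A related subtlety is that the modular double half $\til K_{2\rho}$ of $u=K_{2\rho}\til K_{2\rho}$ (Proposition \ref{u}) acts on the weight basis of $V_k$ by a sign factor $(-1)^{(\text{weight})}$ that cancels the $-q_i$ in $K_i=-q_ie^{2\pi b_i\l_{\s(i)}}$, so that $C_k(\l)$ comes out positive --- consistent with the direct $\sl(2,\R)$ computation in Section \ref{sec:sl2}.
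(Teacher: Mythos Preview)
Your approach matches the paper's: the paper itself offers essentially no proof beyond the sentence preceding the Lemma, and you have correctly expanded the two ingredients (irreducibility $\Rightarrow$ scalar action, and the $E\cdot K\cdot F$ decomposition killing $C_{EKF}$ on the virtual lowest weight). Your identification of the Cartan part of $R$ and $R_{21}$ as $\bQ$, and the verification that $\bQ_{21}=\bQ$ via the symmetry $d_i(A\inv)_{ij}=d_j(A\inv)_{ji}$, are exactly what is implicit in the paper's ``it is clear that'' before the Lemma.

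There is one small gap worth patching. You assert that any non-identity $\what R$-contribution lands in $C_{EKF}$, but consider the cross term where $R$ contributes a genuine $E_\a$ in the first slot while $R_{21}$ contributes only its Cartan part: the first slot then reads $K'E_\a$ with \emph{trivial} $F$-block, so it is not of the form ``$F$ acting on the right'' and is not annihilated by the virtual lowest weight. The resolution is that this term does not survive the partial trace: in the second slot the same cross term carries $K''F_\a$, and since $F_\a$ strictly lowers weight while $K''u\inv$ is diagonal in a weight basis of $V_k$, one has $Tr|_{V_k}(K''F_\a u\inv)=0$. (Equivalently, centrality of $\bC_k$ forces every PBW monomial to have weight zero, so a non-trivial $E$-block forces a non-trivial $F$-block.) Once you insert this remark, your PBW bookkeeping paragraph becomes airtight, since the only remaining mixed terms genuinely carry both an $E_\a$ from $R$ and an $F_\b$ from $R_{21}$ in the first slot, and those are already $EKF$-ordered with non-trivial $F$ on the right as you note.
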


Since $\bQ^2(1\ox u\inv)$ is diagonal, the trace is simply given by the sum of the eigenvalues over the weight spaces of $V_k$. In particular the action on the second component is simply substituting its eigenvalues into $W_i$.

Fix a fundamental representation $V_k$, and fix a weight space $\cV$ such that $W_i$ acts as the scalar $\w_i$. Then $\bQ$ acts  on the second component as
$$\bQ= \prod_{i=1}^n K_i^{\w_i},$$
while $u$ acts as
$$u= \prod_iq_i^{2\w_i}\prod_ie^{2\pi \bi\w_i},$$
hence the corresponding term in the trace is given by
$$Tr|_{V_k}(\bQ^2(1\ox u\inv)) = \prod_{i=1}^n (e^{-\pi \bi}q_i\inv K_i)^{2\w_i},$$
and the action on the virtual lowest weight vector $\d(\vec[x]-\vec[c])$ is given by the scalar
$$\prod_{i=1}^ne^{4\pi b_i \l_{\s(i)}\w_i} = e^{4\pi \sum b_i\l_{\s(i)}\w_i}.$$
It is important to note that this is a \emph{positive scalar}, where the dependence on $-q_i$ vanished. Hence combining all the weight spaces, and using the fact that the image is invariant under the Weyl group action on $\l_i$, we can replace $\l_{\s(i)}$ by $-\l_i$ and obtain the main result of the paper:
\begin{Thm}\label{main} Let $\mu_{\cV}$ denote the weight of the weight space $\cV\subset V_k$. The generalized Casimir operators $\bC_k$ acts by the scalar
\Eq{C_k(\l)=\sum_{\cV\subset V_k} \exp\left(-4\pi \mu_\cV(\vec[\l_b])\right),}
where the sum is taken over all the weight spaces of $V_k$, and (cf. \eqref{lb}) $$\vec[\l_b]=\sum_{i=1}^n \l_i b_i W_i\in\fh_\R.$$
\end{Thm}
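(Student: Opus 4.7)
The proof will essentially formalize the discussion appearing in the paragraphs just before the theorem, so the plan is to collect those ingredients and verify each step carefully.

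First, I would invoke Theorem \ref{central}, which guarantees that $\bC_k$ lies in the center of $\cU_q(\g)$, and then use irreducibility of $\cP_\l$ to conclude that $\bC_k$ must act on it as a single scalar. The next move is to evaluate this scalar by testing it against the virtual lowest weight vector $\d(\vec[x]-\vec[c])$ constructed in Section \ref{sec:virtual}. Using the explicit form $R R_{21}(1\ox u\inv)$ together with the factorization \eqref{RRn} of the universal $R$-matrix, I would write
\Eqn{
\bC_k = C_K + C_{EKF},
}
where $C_K\in \cU_q(\fh)^{\ox 2}$ collects the purely Cartan contribution coming from the $\bQ^{1/2}\bQ^{1/2}=\bQ^2$ prefactors, while $C_{EKF}$ collects all remaining terms, each of which contains at least one factor from the nontrivial part $\what R(\cdot)$ of the $R$-matrix. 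Since every such factor contains a power of $F_i$ acting on the right, and the transcendental duals $\til{\bf_i}$ also annihilate $\d(\vec[x]-\vec[c])$ by the very definition of virtual lowest weight (Lemma \ref{zero}), the contribution of $C_{EKF}$ vanishes on this distribution. Hence the eigenvalue equals the action of $C_K=(1\ox \mathrm{Tr}|_{V_k})(\bQ^2(1\ox u\inv))$, evaluated with the $K_i$ eigenvalues read off from the virtual lowest weight.

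Next I would decompose the trace over weight spaces $\cV\sub V_k$ of weight $\mu_\cV$. On such a weight space, $W_i$ acts by $\mu_\cV(W_i)$, so the operator $\bQ^2$ acts on the second tensor factor by $\prod_i K_i^{2\mu_\cV(W_i)}$, while the twist $u\inv = \prod_i (q_i^{2W_i})^{-Q_i/b_i}$ acts by the numerical factor $\prod_i q_i^{-2\mu_\cV(W_i)} \prod_i e^{-2\pi \bi\, \mu_\cV(W_i)}$. Substituting the virtual lowest weight eigenvalue $K_i = -q_i e^{2\pi b_i \l_{\s(i)}}$, the product of the factors $(-1)^{2\mu_\cV(W_i)}$, $q_i^{2\mu_\cV(W_i)}$, and $e^{-2\pi \bi \mu_\cV(W_i)}$ precisely cancels, leaving the positive contribution
\Eqn{
\prod_i e^{4\pi b_i \l_{\s(i)} \mu_\cV(W_i)} = \exp\!\left(4\pi \sum_i \l_i b_i\, \mu_\cV(W_{\s\inv(i)})\right).
}
Summing over weight spaces yields a positive scalar depending on $\l$.

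Finally, to put the answer into the form stated in the theorem, I would exploit the fact that $\cP_\l\simeq \cP_{w\cdot\l}$ for every Weyl group element (in particular for $-w_0$, which implements the involution $\s$), so the scalar $C_k(\l)$ must be Weyl-invariant in $\l$. This allows replacing $\l_{\s(i)}$ by $-\l_i$, and recombining the sum over weight spaces in the pairing notation $\mu_\cV(\vec[\l_b])$ gives the stated formula. The main subtlety, and the step I would check most carefully, is the sign/phase cancellation between $-q_i$ (coming from $K_i$ on the virtual lowest weight), $q_i^{-2\mu_\cV(W_i)}$ (from the $q$-part of $u\inv$), and $e^{-2\pi \bi \mu_\cV(W_i)}$ (from the transcendental part of $u\inv$); this is where the precise definition of the twist $u$ in Proposition \ref{u} — crucially involving \emph{both} $\cU_q$ and $\cU_{\til q}$ factors — pays off, and it is what ultimately forces $C_k(\l)$ to be manifestly positive.
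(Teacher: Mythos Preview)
Your proposal is correct and follows essentially the same route as the paper: reduce to the Cartan part via the virtual lowest weight vector, compute the contribution of $\bQ^2(1\ox u\inv)$ weight-space by weight-space, observe the exact cancellation of the $-q_i$ and $e^{-2\pi\bi\mu_\cV(W_i)}$ phases coming from the modular-double twist $u$, and finally use Weyl invariance to trade $\l_{\s(i)}$ for $-\l_i$. Your explicit isolation of the three phase factors and the remark that the $\til q$-part of $u$ is what forces positivity are exactly the points the paper highlights.
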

\begin{Cor} $C_k(\l)$ is positive, and bounded below by $\dim V_k$. 
\end{Cor}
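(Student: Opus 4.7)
The plan is to prove the two claims separately, using only the formula
\[ C_k(\l)=\sum_{\cV\subset V_k}\exp\bigl(-4\pi\mu_\cV(\vec[\l_b])\bigr) \]
from Theorem \ref{main}, where the sum runs over weight spaces of $V_k$ counted with multiplicity (so that the number of terms is $\dim V_k$).

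Positivity is immediate: since $\vec[\l_b]\in\fh_\R$ and $\mu_\cV\in\fh_\R^*$, each exponent $-4\pi\mu_\cV(\vec[\l_b])$ is real, so each summand is a strictly positive real number. Hence $C_k(\l)>0$ for every $\l\in\R_{\geq 0}^n$.

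For the lower bound, I would apply the AM--GM inequality to the $N:=\dim V_k$ positive terms:
\[ C_k(\l)=\sum_{\cV\subset V_k}e^{-4\pi\mu_\cV(\vec[\l_b])}\;\geq\; N\cdot\exp\!\left(\frac{-4\pi}{N}\sum_{\cV\subset V_k}\mu_\cV(\vec[\l_b])\right). \]
The problem therefore reduces to showing that the sum of weights, counted with multiplicity, of the fundamental representation $V_k$ vanishes in $\fh_\R^*$. This is standard: for any finite-dimensional representation $V$ of a semisimple Lie algebra, the multiset of weights is stable under the Weyl group action (since $V$ integrates to the compact form), so $S:=\sum_{\cV\subset V_k}\mu_\cV\in\fh_\R^*$ is $W$-invariant. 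But the only $W$-invariant vector in $\fh_\R^*$ for a Weyl group of a semisimple $\g$ is the origin, hence $S=0$. Substituting back gives $C_k(\l)\geq N\cdot e^0=\dim V_k$.

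The argument is short and the only step that requires any care is verifying that weights are indeed to be counted with multiplicity in the sum from Theorem \ref{main} (tracing back, this is because the quantum trace in the proof of that theorem is evaluated as a sum over a weight basis rather than over distinct weight spaces), and verifying the $W$-invariance claim. Neither constitutes a real obstacle, so I expect the proof to be essentially a two-line deduction from Theorem \ref{main}.
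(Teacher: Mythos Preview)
Your proof is correct and follows essentially the same approach as the paper: both argue that the exponents sum to zero (the paper says ``by symmetry,'' which you correctly unpack as $W$-invariance of the weight multiset forcing the weight sum to vanish), then apply AM--GM to the $\dim V_k$ positive exponential terms. Your version is simply a more fully justified expansion of the paper's two-line argument.
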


\begin{proof} Since the arguments in the exponents sum to zero by symmetry, by the AM-GM inequality, $C_k(\l)\geq \sum_{\cV\subset V_k} 1 = \dim V_k$ and equality holds if and only if all $\l_i=0$. The dimension $\dim V_k$ of the fundamental representations for each simple Lie type can be found in the appendix.
\end{proof}

We note that if one rescales the parameters $\l_i\mapsto b_i\inv \l_i$, then the eigenvalues $C_k(\l)$ is \emph{independent} on $q$. This is different from the situation in the compact quantum group case, where the Casimirs act as polynomials in $q$.

Finally, we observe that the formula is simply the character of $e^{-4\pi \vec[\l_b]}$ over the fundamental representation $V_k$. Hence the result can be rewritten in terms of the Weyl character formula.

\begin{Cor} We have
\Eq{C_k(\l) = \frac{\sum_{w\in W} sgn(w)e^{-4\pi (w_k+\rho)(w\cdot\vec[\l_b])}}{\prod_{\a\in\D^+}(e^{-2\pi\a(\vec[\l_b])}-e^{2\pi\a(\vec[\l_b])})},
}
where $w_k$ is the $k$-th fundamental weight, which is also the highest weight of $V_k$. In particular, $C_k$ is invariant under the Weyl group action \eqref{lb-weyl} of $\vec[\l_b]$.
\end{Cor}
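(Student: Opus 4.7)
The plan is to recognize the expression in Theorem \ref{main} as nothing other than the character of the fundamental representation $V_k$ of $\cU_q(\g)$ evaluated at a specific element of (the real form of) the Cartan, and then to feed this into the classical Weyl character formula.

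First I would rewrite Theorem \ref{main} in character-theoretic language. Since each weight $\mu$ of $V_k$ contributes $\dim V_\mu$ copies to the sum (as the sum is over weight \emph{spaces}, not over distinct weights), we have
\Eq{
C_k(\l) \;=\; \sum_{\mu \in \mathrm{wts}(V_k)} (\dim V_\mu)\, e^{-4\pi \mu(\vec[\l_b])} \;=\; \chi_{V_k}\bigl(-4\pi \vec[\l_b]\bigr),\nonumber
}
where $\chi_{V_k}$ is the character of $V_k$ regarded as a function on $\fh_\R$. The second equality is just the definition of a character. Note that because $V_k$ is the $k$-th fundamental representation of the compact-type quantum group $\cU_q(\g)$ (equivalently, its classical counterpart, since weights and their multiplicities agree), its character is the familiar Weyl character.

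Next I would apply the Weyl character formula to $\chi_{V_k}$. With highest weight $w_k$ and half-sum of positive roots $\rho$ as in Definition \ref{rho}, and $h := -4\pi \vec[\l_b]\in\fh_\R$, the formula reads
\Eq{
\chi_{V_k}(h) \;=\; \frac{\sum_{w\in W}\mathrm{sgn}(w)\, e^{w(w_k+\rho)(h)}}{\prod_{\a\in\D^+}\bigl(e^{\a(h)/2}-e^{-\a(h)/2}\bigr)}.\nonumber
}
Substituting $h=-4\pi\vec[\l_b]$, the denominator becomes $\prod_{\a\in\D^+}(e^{-2\pi\a(\vec[\l_b])}-e^{2\pi\a(\vec[\l_b])})$, which already matches the claim. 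For the numerator, I would use the identity $w(\mu)(x)=\mu(w^{-1}x)$ together with the change of summation variable $w\mapsto w^{-1}$ (which preserves $\mathrm{sgn}$) to rewrite
\Eq{
\sum_{w\in W}\mathrm{sgn}(w)\, e^{-4\pi w(w_k+\rho)(\vec[\l_b])}
\;=\;\sum_{w\in W}\mathrm{sgn}(w)\, e^{-4\pi (w_k+\rho)(w\cdot \vec[\l_b])},\nonumber
}
which is exactly the numerator in the statement. Combining the two pieces gives the desired formula.

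Finally, Weyl invariance under the action $w\cdot\vec[\l_b]$ of \eqref{lb-weyl} follows with no extra work. Once $C_k(\l)=\chi_{V_k}(-4\pi\vec[\l_b])$, invariance is a standard consequence of the fact that weight multiplicities of a finite-dimensional representation are constant on Weyl orbits: for any $w\in W$,
\Eq{
\chi_{V_k}(-4\pi\, w\cdot\vec[\l_b]) \;=\; \sum_{\mu}(\dim V_\mu)\,e^{-4\pi(w^{-1}\mu)(\vec[\l_b])} \;=\; \sum_{\nu}(\dim V_{w\nu})\,e^{-4\pi \nu(\vec[\l_b])} \;=\; \chi_{V_k}(-4\pi\vec[\l_b]),\nonumber
}
using $\dim V_{w\nu}=\dim V_\nu$. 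Since this proof amounts to invoking two well-known identities, there is no real obstacle; the only thing to watch for is bookkeeping of the factors of $-4\pi$ and the direction of the Weyl action (acting on $h$ versus on weights), which is handled by the $w\mapsto w^{-1}$ reindexing above.
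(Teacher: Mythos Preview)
Your proposal is correct and follows exactly the approach the paper takes: the paper simply observes, just before stating the corollary, that the expression in Theorem \ref{main} is the character of $V_k$ evaluated at $-4\pi\vec[\l_b]$, whence the Weyl character formula applies. Your write-up merely spells out the bookkeeping (the $w\mapsto w^{-1}$ reindexing and the Weyl-invariance of weight multiplicities) that the paper leaves implicit.
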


\begin{Rem} Actually it follows from the proof that for \emph{any} finite dimensional representation $V$, the central element
\Eq{\bC_V=(1\ox Tr|_V^q)(RR_{21})}
acts by \emph{positive} scalar, which is bounded below by $\dim V$. We will call these operators the \emph{positive (generalized) Casimir operators}.
\end{Rem}

The conjecture \cite{Ip5} that the positive representations $\cP_\l$ are closed under taking tensor product then implies
\begin{Con} The coproduct $\D(\bC_k)$ of the generalized Casimir operators $\bC_k$ acting on $\cP_\l\ox\cP_\mu$ is a positive operator, with spectrum bounded below by $\dim V_k$ for all $k=1,..,n$, and they can be simultaneously diagonalized.
\end{Con}
For example, in the case of Type $A_2$, one computes $\D(\bC_1)$ and $\D(\bC_2)$ to have 196 terms each with all coefficients positive, hence they are positive operators acting on $\cP_\l\ox\cP_\mu$. However to diagonalize them seems to be a very difficult task.
 
\section{Discriminant variety}\label{sec:disc}

By the quantum Harish-Chandra homomorphism \cite{KS}, the central characters $C_k(\l)$ on the generalized Casimir operators $\bC_k$ is invariant under the Weyl group action \eqref{lb-weyl} on $\l$, and they take on the same value if and only if $\l$ lies in the same Weyl group orbits. Furthermore, any Laurent polynomials $D(\l)$ spanned by the action of $K_i^{\frac{1}{c}}$ in $\cU_q(\fh)$ (cf. Definition \ref{abuse}) that is invariant under the action of $W$, is given by polynomials in $C_k(\l)$.

We have seen that the generalized Casimir operators $\bC_k$ acts as a positive scalar on the positive representations $\cP_\l$ with parameter $(\l_1,...,\l_n)$ where $\l_i\geq 0$. In particular, we have a map
\Eq{\Phi:\R_{\geq 0}^n &\to \R_{>0}^n\\
(\l_1,...,\l_n) &\mapsto (C_1(\l),...,C_n(\l))\nonumber,
}
which maps the positive quadrant $\R_{\geq 0}^n$ (isomorphic to the positive Weyl chamber) into a simply-connected region $\cR$. The region $\cR$ is obviously bounded by the hypersurfaces given by the image of $\cS_i := \Phi|_{\l_i=0}, i=1,..., n$. Furthermore, these hypersurfaces $\cS_i$ are actually part of a real algebraic variety known as the \emph{discriminant variety} (or \emph{divisor}) defined by polynomials in the space $\R^n$, hence the region $\cR$ is actually a semi-algebraic set \cite{Sai2}. Finally, it appears that the region $\cR$ develops what is called an ADE-type singularity \cite{Sai1} at the point of $\Phi(0,...,0)$, though this requires further clarification from other experts.

The calculations below and the next section are similar in part to \cite{Gr}, which described the discriminant variety in a completely different point of view. Let us consider the following expression
\Eq{D(\l) = \prod_{\a\in\D}(1-e^{-4\pi \a(\vec[\l_b])}),}
where the product is over all roots $\a$ of the root system of $\g$. Then this expression is invariant under the Weyl group action on $\vec[\l_b]$ given by \eqref{lb-weyl}. Hence in particular $D(\l)$ can be expressed in terms of polynomials given by the generalized Casimir $C_k(\l)$. Furthermore, by rescaling $\l_i\to b_i\inv \l_i$, we see that these polynomials are independent on $q$. Finally note that by setting any $\l_i=0$, we have $D(\l) = 0$. Hence
\begin{Prop} $D(\l)$ is a polynomial in $C_k(\l)$. In particular the boundary hypersurfaces of the image $\cR$ of $\Phi$ is part of the algebraic variety defined by $D(\l)=0$ on $\R^n$, and the region $\cR$ is independent on $q$.
\end{Prop}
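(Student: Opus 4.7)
The plan is to handle the three assertions in turn.

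First, to write $D(\l)$ as a polynomial in the $C_k(\l)$: by Theorem \ref{main}, the central character $C_k(\l)=\chi_{V_k}(-4\pi\vec[\l_b])$ is the evaluation at $-4\pi\vec[\l_b]\in\fh_\R$ of the character of the $k$-th fundamental representation. Similarly $D(\l)$ is the evaluation at the same point of $\prod_{\a\in\D}(1-e^{-\a})$ viewed as an element of the group algebra of the root lattice. This element is manifestly Weyl-invariant since $W$ permutes $\D$, and since the root lattice is contained in the weight lattice, the classical structure theorem for Weyl-invariant Laurent polynomials on the weight lattice (a consequence of Chevalley's theorem) identifies the invariant ring with $\Z[\chi_{V_1},\ldots,\chi_{V_n}]$. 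Hence there exists a polynomial $\Psi\in\Z[x_1,\ldots,x_n]$ with $D(\l)=\Psi(C_1(\l),\ldots,C_n(\l))$ identically.

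Second, to identify the boundary $\cS_i$ with a subset of the hypersurface $\{\Psi=0\}\subset\R^n$: I would compute the pairing directly. Using the duality $\a_i(W_j)=\d_{ij}$ from Definition \ref{rho}, one finds $\a_i(\vec[\l_b])=\l_ib_i$, so the factor of $D(\l)$ indexed by $\a=\a_i$ equals $1-e^{-4\pi\l_ib_i}$, which vanishes on the hyperplane $\l_i=0$. Therefore $D(\l)$ itself vanishes there, and composing with $\Phi$ via the first step gives $\Psi(\Phi(\l))|_{\l_i=0}\equiv 0$; thus $\cS_i=\Phi|_{\l_i=0}(\R_{\geq 0}^{n-1})\subset\{\Psi=0\}$.

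Third, for the $q$-independence: the remark preceding the proposition states that after the rescaling $\l_i\mapsto b_i\inv\l_i$ the scalars $C_k(\l)$ become independent of $q$. Since this rescaling is a homeomorphism of $\R_{\geq 0}^n$ onto itself, the image $\cR=\Phi(\R_{\geq 0}^n)$ is the same as the image of the rescaled, $q$-independent map, so $\cR$ does not depend on $q$. The only delicate point in this whole argument is the first step, where one must be careful that $D$ lies a priori in the group algebra of the root lattice rather than the weight lattice; however, the inclusion of $W$-invariants $\Z[Q]^W\subset\Z[P]^W=\Z[\chi_{V_1},\ldots,\chi_{V_n}]$ still places $D$ inside the polynomial ring generated by the fundamental characters, so no genuine obstacle arises and the real content of the proposition is the invocation of invariant theory.
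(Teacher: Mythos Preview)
Your proof is correct and follows essentially the same route as the paper: the paper argues, immediately before the proposition, that $D(\l)$ is Weyl-invariant and hence (by the structure theorem for $W$-invariant Laurent polynomials stated at the beginning of Section \ref{sec:disc}) a polynomial in the $C_k(\l)$, that setting $\l_i=0$ makes $D(\l)$ vanish, and that the rescaling $\l_i\mapsto b_i^{-1}\l_i$ removes the $q$-dependence. Your write-up is somewhat more explicit---you name Chevalley's theorem, pin down the inclusion $\Z[Q]^W\subset\Z[P]^W$, and identify the vanishing factor $1-e^{-4\pi\l_ib_i}$---but the argument is the same.
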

In the non-simply-laced case, the product can be split over long roots and short roots:
$$D(\l) = \prod_{\a\in \D_{short}}(1-e^{-4\pi \a(\vec[\l_b])})\prod_{\a\in\D_{long}}(1-e^{-4\pi \a(\vec[\l_b])}):=D_sD_l.$$
In particular, $D(\l)$ is reducible, and the boundary is composed of two separate hypersurfaces.

We will consider the examples of the region $\cR$ in type $A_n$ as well as other lower rank case. In all situation, the polynomial $D(\l)$ equals the discriminant of certain polynomials, hence $D(\l)=0$ is also known as the discriminant variety. In geometrical terms, this is the variety describing the critical points of the covering $T\to T/W$ of the split torus.
\subsection{Type $A_n$}\label{sec:disc:An}
In Type $A_n$, the first fundamental representation has weight spaces $\cV_i$ with weight
$$\mu_{\cV_i}(H_j) = \case{1&i=j,\\-1&i=j+1,\\0&\mbox{otherwise,}}$$
and one can find explicitly:
$$C_1(\l) = \sum_{i=1}^n e^{4\pi b(\frac{n-1}{n}\l_1+\frac{n-2}{n}\l_2+...+\frac{1}{n}\l_n)-4\pi b\sum_{k=1}^{i-1}\l_k}:=\sum_{i=1}^n e^{L_i}.$$
Now we take the polynomial
$$P(x) := \prod_{i=1}^n \left(x+e^{L_i}\right).$$
Then it is easy to see that two $L_i$ will coincides if we set any $\l_k=0$. In particular, this means the polynomial $P(x)$ has a double root, hence the discriminant
$\D(P(x)) = 0$
gives the boundary hypersurface for our region $\cR$, and it coincides (up to multiples) with $D(\l)$.
\begin{Rem}$P(x)$ can also be written as
 $$P(x) = x^{n+1}+C_1(\l)x^{n}+...+C_n(\l) x +1,$$
hence the discriminant gives an explicit expression for $D(\l)=0$ in terms of $C_k(\l)$.
\end{Rem}
\section{Examples in low ranks}\label{sec:lowrank}
In this section, we will construct explicitly some generalized Casimir operator, and describe the region $\cR$ of the image of $\Phi$. Recall that the generalized Casimir operator is computed using
$$\bC_k = (1\ox Tr|_{V_k}^q)(RR_{21}).$$
In particular, we know from the discussion in Section \ref{sec:cc} that it is of the form $C_K+C_{EKF}$. Recall our rescaled notation \eqref{smallef}
$$\be_i := \left(\frac{\bi}{q_i-q_i\inv}\right)\inv E_i,\tab \bf_i := \left(\frac{\bi}{q_i-q_i\inv}\right)\inv F_i.$$ 

For simplicity, throughout the calculations let us denote by 
\Eq{k_i := e^{2\pi b_i \l_i},}
i.e. the action of $-q_i\inv K_i$ on the virtual lowest weight vector. Also we will write $X,Y$ and $Z$ as the coordinates taken up by $C_1(\l),C_2(\l)$ and $C_3(\l)$ respectively.
\subsection{Type $A_1$}\label{sec:lowrank:A1}
We have already seen the calculations in Type $A_1$ in Section \ref{sec:sl2}. The Casimir is given by
\Eq{\bC = \bf \be-qK-q\inv K\inv,}
and the eigenvalue is on $\cP_\l$ is given by
\Eq{C(\l) = e^{2\pi b\l} + e^{-2\pi b\l}.}
We have 
\Eq{D(\l) = (1-e^{-4\pi b\l})(1-e^{4\pi b\l}) = 2-e^{-4\pi b\l}-e^{4\pi b\l} = 4-C(\l)^2,}
which gives the obvious boundary $C(\l) = 2$. Note that $-D(\l)$ is the discriminant of the polynomial
\Eq{P(x) = (x+e^{2\pi b\l})(x+e^{-2\pi b\l})=x^2 + C(\l)x+1.}
\subsection{Type $A_2$}\label{sec:lowrank:A2}
The generalized Casimir operator is given by
\Eq{
\bC_1=&K(q^{-2} K_1K_2+K_1\inv K_2 + q^2 K_1\inv K_2\inv -q\inv K_2 \be_1\bf_1-qK_1\inv \be_2\bf_2 + \be_{21}\bf_{12}),\\
\bC_2=&K\inv(q^{2} K_1\inv K_2\inv+K_1 K_2\inv + q^{-2} K_1 K_2 -q K_2\inv \be_1\bf_1-q\inv K_1 \be_2\bf_2 + \be_{12}\bf_{21}),
}
where $K=K_1^{\frac{1}{3}}K_2^{-\frac{1}{3}}$, and
\Eq{\be_{ij}:=\frac{q^{\half}\be_j\be_i - q^{-\half}\be_i\be_j}{q-q\inv},\tab \bf_{ij}:=\frac{q^{\half}\bf_j\bf_i - q^{-\half}\bf_i\bf_j}{q-q\inv}} are the images of the Lusztig's isomorphism extended to positive generators \cite{Ip4}. These expressions of the Casimir operators coincide with the explicit formula given in \cite{RP}.

The central characters are computed to be
\Eq{
C_1(\l) &= e^{\frac{8}{3}\pi b\l_1+\frac{4}{3}\pi b\l_2}+e^{-\frac{4}{3}\pi b\l_1+\frac{4}{3}\pi b\l_2}+e^{-\frac{4}{3}\pi b\l_1-\frac{8}{3}\pi b\l_2},\\
C_2(\l) &= e^{\frac{4}{3}\pi b\l_1+\frac{8}{3}\pi b\l_2}+e^{\frac{4}{3}\pi b\l_1-\frac{4}{3}\pi b\l_2}+e^{-\frac{8}{3}\pi b\l_1-\frac{4}{3}\pi b\l_2}.
}
Taking the discriminant of 
\Eq{P(x) &=(x+e^{\frac{8}{3}\pi b\l_1+\frac{4}{3}\pi b\l_2} )(x+e^{-\frac{4}{3}\pi b\l_1+\frac{4}{3}\pi b\l_2})(x+e^{-\frac{4}{3}\pi b\l_1-\frac{8}{3}\pi b\l_2})\nonumber\\
&=x^3+C_1(\l)x^2+C_2(\l)x+1,} we found the boundary curves to be given by the algebraic equation
\Eq{(XY+9)^2=4(X^3+Y^3+27)}
with a cusp at (3,3) corresponding to the image of $\Phi(0,0)$.

\subsection{Type $A_3$}\label{sec:lowrank:A3}
The full expression of the generalized Casimir operator is too complicated to be listed here. However the central characters are easily computed using the information of the weight spaces of the fundamental representations. We have
\Eq{C_1=&k_1^{\frac{3}{2}}k_2k_3^{\frac{1}{2}}+k_1^{-\frac{1}{2}}k_2k_3^{\frac{1}{2}}+k_1^{-\frac{1}{2}}k_2^{-1}k_3^{\frac{1}{2}}+k_1^{-\frac{1}{2}}k_2^{-1}k_3^{-\frac{3}{2}},\\
C_2=&k_1k_2^2k_3+k_1k_3+k_1^{-1}k_3+k_1k_3^{-1}+k_1^{-1}k_3^{-1}+k_1^{-1}k_2^{-2}k_3^{-1},\\
C_3=&k_1^{\frac{1}{2}}k_2k_3^{\frac{3}{2}}+k_1^{\frac{1}{2}}k_2k_3^{-\frac{1}{2}}+k_1^{\frac{1}{2}}k_2^{-1}k_3^{-\frac{1}{2}}+k_1^{-\frac{3}{2}}k_2^{-1}k_3^{-\frac{1}{2}}.
}
The region is bounded by the discriminant of the polynomial
\Eq{P(x) = x^4+C_1(\l)x^3+C_2(\l)x^2+C_3(\l)x+1,} which is given by
\Eq{
256 - 27 X^4 + 144 X^2 Y - 128 Y^2 - 4 X^2 Y^3 + 16 Y^4 - 192 X Z + 
 18 X^3 Y Z - 80 X Y^2 Z \nonumber\\- 6 X^2 Z^2 
+ 144 Y Z^2 + X^2 Y^2 Z^2 - 
 4 Y^3 Z^2 - 4 X^3 Z^3 + 18 X Y Z^3 - 27 Z^4=0
}
and develops a cusp at (4,6,4).

\subsection{Type $B_2$}\label{sec:lowrank:B2}
The fundamental representations $V_1,V_2$ of $B_2$ has dimension $4$ and $5$ respectively. In order to calculate the generalized Casimir, we choose the following representations. Let $e_{ij}$ be the elementary matrix with $1$ at the $(i,j)$-th position, and 0 otherwise.
\Eqn{
\pi_{V_1}(E_1) &= e_{12}+e_{34},& \pi_{V_1}(E_2)&=e_{23},\\
\pi_{V_1}(F_1) &= e_{21}+e_{43},& \pi_{V_1}(F_2)&=e_{32},\\
\pi_{V_1}(H_1) &= e_{11}-e_{22}+e_{33}-e_{44},& \pi_{V_1}(H_2)&=e_{22}-e_{33},
\\\\
\pi_{V_2}(E_1) &=e_{23}+e_{34},& \pi_{V_2}(E_2)&=e_{12}+e_{45},\\
\pi_{V_2}(F_1) &=[2]_{q^\half}(e_{32}+e_{43}),& \pi_{V_2}(F_2)&=e_{21}+e_{54},\\
\pi_{V_2}(H_1) &= 2e_{22}-2e_{44},& \pi_{V_2}(H_2)&=e_{11}-e_{22}+e_{44}-e_{55},
}
where $[2]_{q^\half}:=q^\half+q^{-\half}$. Recall $q_1=e^{\pi ib_s^2} = q^{\half}, q_2=e^{\pi ib_l^2}=q$. We denote the following rescaled variables for the non-simple root vector, which are all positive self-adjoint operators on $\cP_\l$ \cite{Ip4}.
\Eqn{\be_{21}&:=\frac{q^\half \be_1 \be_2 - q^{-\half}\be_2\be_1}{q-q\inv},&\be_{121}&:=\frac{q^\half \be_2\be_1 - q^{-\half}\be_1\be_2}{q-q\inv},\\
\be_{12}&:=\frac{\be_{21}\be_1-\be_1\be_{21}}{q^\half-q^{-\half}},&\be_{212}&:=\frac{\be_{121}\be_1-\be_1\be_{121}}{q^\half-q^{-\half}},\\
\be_X&:=\frac{q^{\half}\be_{21}\be_1-q^{-\half}\be_1\be_{21}}{q^\half-q^{-\half}},&\be_Y&:=\frac{q^{\half}\be_{121}\be_{21}-q^{-\half}\be_{21}\be_{121}}{q^\half-q^{-\half}},
}
and similarly for $\bf$'s. Then we have

\Eq{
C_1 &= -q^{-2}K_1^2K_2-q\inv K_2+q K_2^{-1}-q^{2}K_1^{-2}K_2^{-1}\\
&+(q^{-3/2}K_1K_2+q^{3/2}K_1^{-1}K_2^{-1})\be_1\bf_1+\be_2\bf_2-q^{-1/2}K_1 \be_{21}\bf_{121}-q^{1/2}K_1\inv \be_{121}\bf_{21}+\be_X\bf_X\nonumber,\\
C_2&=q^{-3} K_1^2K_2^2+q\inv K_1^2+1+q K_1^{-2}+q^{3}K_1^{-2}K_2^{-2}\\
&-[2]_{q^\half}(q\inv K_1+qK_1^{-1})\be_1\bf_1-(q^{-2}K_1^2K_2^2+q^{2}K_1^{-2}K_2^{-1})\be_2\bf_2+\be_1^2\bf_1^2\nonumber\\
&+q^{-2}[2]_{q^\half}K_1 K_2 \be_{121}\bf_{21}+q^2[2]_{q^\half}K_1\inv K_2\inv \be_{21}\bf_{121}-q K_2\inv \be_{212}\bf_{12}-q\inv K_2 \be_{12}\bf_{212}+\be_Y\bf_Y\nonumber,}
with the central characters given by
\Eq{
C_1(\l) &= k_2^2k_2+k_2+k_2\inv+k_1^{-2}k_2\inv,\\
C_2(\l) &= k_1^2k_2^2+k_1^2+k_1^{-2}+k_1^{-2}k_2^{-2}+1.
}
The function $D(\l)$ is reducible as $D(\l) = D_s(\l)D_l(\l)$ where
\Eq{
D_s(\l) &= C_1(\l)^2-4C_2(\l)+4, \\
D_l(\l)&= (C_2(\l)+3)^2-4C_2(\l)^2.
}
In particular the region $\cR$ (which lies in the positive quadrant) is bounded by two curves (cf. Figure \ref{B2}) given by
\Eq{2X=Y+3,\tab X^2=4(Y-1).}
It also develops a cusp-like singularity at the intersection of the two curves at $(4,5)$.
\subsection{Type $B_3$}\label{sec:lowrank:B3}
The fundamental representations has dimension $8,21,7$ respectively, where the second fundamental representation is the 21-dimensional adjoint representation.

The central characters are given by
\Eq{C_1(\l)&=\sum_{\e=\pm}(k_1^3k_2^2k_3)^\e+\sum_{\e=\pm}(k_1 k_2^2 k_3)^\e+\sum_{\e_1,\e_2=\pm} k_1^{\e_1}k_3^{\e_2}, \\
C_2(\l)&=3+\sum_{\a\in \D} k_\a^2\nonumber\\
&=3+k_1^2+k_2^2+k_3^2+k_1^4k_2^2+k_1^2k_2^2+k_2^2k_3^2+k_1^4k_2^2k_3^2+k_1^2k_2^2k_3^2+k_1^4k_2^4k_3^2+\mbox{inverse,}\\
C_3(\l)&=1+\sum_{\e=\pm} k_1^{2\e}+\sum_{\e=\pm}(k_1k_2)^{2\e}+\sum_{\e=\pm} (k_1k_2k_3)^{2\e}.
}

Consider the characteristic polynomial for the third fundamental representation:
\Eq{P(x)&=(x-k_1^2)(x-k_1^{-2})(x-k_1^2k_2^2)(x-k_1^{-2} k_2^{-2})(x-k_1^2k_2^2k_3^2)(x-k_1^{-2} k_2^{-2} k_3^{-2})\nonumber\\
&=1+x^6+(C_3(\l)-1)(x+x^5)+(C_2(\l)-C_3(\l)+1)(x^2+x^4)+(C_1(\l)^2-2C_2(\l)-2)x^3.
}
Then it has multiple roots whenever $\l_i=0$, hence the boundary hypersurfaces are given by the zero level set of the discriminant
$$\D(P(x))=X^2D_sD_l^2=0,$$
where
\Eq{D_s &= X^2-4Y+4Z-8,\\
D_l&=36 + 40 X^2 - 27 X^4 - 132 Y + 90 X^2 Y - 47 Y^2 - 4 Y^3 - 36 Z +\\ 
& 78 X^2 Z - 162 Y Z + 18 X^2 Y Z - 26 Y^2 Z - 27 Z^2 - 6 X^2 Z^2 - \nonumber\\
& 36 Y Z^2 + Y^2 Z^2 + 18 Z^3 - 4 X^2 Z^3 + 6 Y Z^3 + 9 Z^4.\nonumber
}
\subsection{Type $C_3$}\label{sec:lowrank:C3}

The results for type $C_3$ are very similar to type $B_3$. The fundamental representations has dimension $8,21,6$ respectively, where the second fundamental representation is the 21-dimensional adjoint representation.

The central characters are given by
\Eq{
C_1(\l)&=\sum_{\e=\pm}(k_1^3k_2^4k_1^2)^\e+ \sum_{\e=\pm}(k_1k_2^4k_3^2)^\e + \sum_{\e_1,\e_2=\pm} k_1^{\e_1}k_3^{2\e_2},\\
C_2(\l)&=3+\sum_{\a\in \D} k_\a^2\nonumber\\
&=3+k_1^2+k_2^2+k_3^2+k_1^2k_2^2+k_1^2k_2^4+k_1^2k_2^2k_3^2+k_1^2k_2^4k_3^2+k_1^2k_2^4k_3^4+k_2^2k_3^2+\mbox{inverse,}\\
C_3(\l)&=\sum_{\e=\pm}{k_1^\e}+\sum_{\e=\pm}(k_1k_2^2)^\e+\sum_{\e=\pm}(k_1k_2^2k_3^2)^\e.
}

Consider the characteristic polynomials for the third fundamental representation:
\Eq{
P(x)=&(x-k_1^2)(x-k_1^{-2})(x-k_1k_2^2)(x-k_1^{-1}k_2^{-2})(x-k_1k_2^2k_3^2)(x-k_1^{-1}k_2^{-2}k_3^{-2})\nonumber\\
=&1+x^6+C_3(\l)(x+x^5)+(C_3(\l)^2-C_2(\l))(x^2+x^4)+(C_1(\l)+2C_3(\l))x^3.
}
The boundary hypersurface is then given by the zero level set of the discriminant
$$\D(P(x)) = D_l D_s^2=0,$$
where
\Eq{D_l&=X^2 - (2(Z-1)^2-2Y)^2,\\
D_s&=108-27X^2+108Y+36Y^2+4Y^3-54XZ-18XYZ-99Z^2-66YZ^2\\
&-11Y^2Z^2+14XZ^3+30Z^4+10YZ^4-3Z^6.\nonumber
}
We note that the region $\cR$ in this case is extremely narrow compared with the one of type $B_3$ (cf. Figure \ref{B3} and \ref{C3}).
\subsection{Type $D_4$}\label{sec:lowrank:D4}
The fundamental representations of type $D_4$ is special due to the triality symmetries. In particular the central characters for $C_0,C_1$ and $C_3$ (using our labeling, cf. Appendix A) are just permutation of index of each other, while the second fundamental representation corresponding to the center node is the 28-dimensional adjoint representation.
\begin{center}
  \begin{tikzpicture}[scale=.4]
    \draw[xshift=0 cm,thick] (0 cm, 1) circle (.3 cm);
    \draw[xshift=0 cm,thick] (0 cm, -1) circle (.3 cm);
    \foreach \x in {1,...,2}
    \draw[xshift=\x cm,thick] (\x cm,0) circle (.3cm);
   \draw[xshift=0.25 cm] (0 cm,1) -- +(1.4 cm,-1);
   \draw[xshift=0.25 cm] (0 cm,-1) -- +(1.4 cm,1);   
 \foreach \y in {1.15}
    \draw[xshift=\y cm,thick] (\y cm,0) -- +(1.4 cm,0);
    \foreach \z in {2,...,3}
    \node at (2*\z-2,-1) {$\z$};
\node at (-1,-1){$0$};
\node at (-1,1){$1$};
  \end{tikzpicture}
\end{center}

\Eq{
C_0(\l)&=k_0^2k_1k_3k_2^2+k_1k_3k_2^2+k_1k_3+k_1\inv k_3+\mbox{inverse,}\\
C_1(\l)&=k_0k_1^2k_3k_2^2+k_0k_3k_2^2+k_0k_3+k_0\inv k_3+\mbox{inverse,}\\
C_3(\l)&=k_0k_1k_3^2k_2^2+k_0k_1k_2^2+k_0k_1+k_0\inv k_1+\mbox{inverse,}\\
C_2(\l)&=4+\sum_{\a\in\D} k_\a^2\nonumber\\
&=4+k_0^2+k_1^2+k_2^2+k_3^2+(k_0k_2)^2+(k_1k_2)^2+(k_3k_2)^2+(k_0k_1k_2)^2+(k_0k_3k_2)^2+(k_1k_3k_2)^2\nonumber\\
&+(k_0k_1k_3k_2)^2+(k_0k_1k_3k_2^2)^2+\mbox{inverse.}
}

Taking the symmetrized characteristic polynomial of the first (0th node) fundamental representation we get
\Eq{P(x) =&(x+k_0^2k_1k_3k_2^2+k_0^{-2}k_1\inv k_3\inv k_2^{-2})(x+k_1k_3k_2^2+k_1\inv k_3\inv k_2^{-2})\cdot\nonumber\\
&\tab (x+k_1k_3+k_1\inv k_3\inv)(x+k_1\inv k_3+k_1k_3\inv)\nonumber\\
&= x^4+C_0(\l)x^3+(C_2(\l)-4)x^2+(C_1(\l)C_3(\l)-4C_0(\l))x+(C_1(\l)^2+C_3(\l)^2-4C_2(\l)).}
Obviously $P(x)$ has double root whenever $\l_i=0$ (i.e. $k_i=1$). Hence the boundary of the region $\cR$ is given by the zero level set of its discriminant $\D(P(x))$, which in this case is precisely $D(\l)$, a polynomial in $C_i(\l)$ consisting of 88 terms.
\subsection{Type $G_2$}\label{sec:lowrank:G2}
The fundamental representations has dimension 14 and 7 respectively. The explicit expression for the Casimir operators are very complicated in the Chevalley basis so we will omit it here. Using Theorem \ref{main}, we compute the central characters:

\Eq{
C_1(\l) &= 1+\sum_{\e=\pm}(k_1^3k_2^2)^\e+\sum_{\e=\pm}(k_1^3k_2)^\e+\sum_{\e=\pm}k_2^\e+C_2(\l),\\
C_2(\l) &= 1+\sum_{\e=\pm}(k_1^2k_2)^\e+\sum_{\e=\pm}(k_1k_2)^\e+\sum_{\e=\pm}k_1^\e.
}
Note that the first fundamental representation contains a copy of weight spaces equal to that of the second fundamental representations.

We use the symmetrized characteristic polynomial from the second fundamental representations:
\Eq{
P(x) &= (x+k_1+k_1\inv)(x+k_1k_2+k_1\inv k_2\inv)(x+k_1^2k_2+k_1^{-2} k_2\inv)\nonumber\\
&=x^3+(C_2(\l)-1)x^2+(C_1(\l)-2)x+(C_2(\l)-1)^2-2C_1(\l),}
which has double roots whenever $\l_i$=0, hence the boundary curves is given by the zero level set of its discriminant:
$$\D(P(x))=D_sD_l=0,$$
where
\Eq{D_l &= (4X^3-X^2-Y^2-10XY-2X-10Y+7),\\
D_s &= 4(Y+2)-(X+1)^2.
}
The boundary of the region $\cR$ develops a cusp-like singularity at the intersection (14,7) of the two curves (cf. Figure \ref{G2}).
\begin{appendices}
\section{Dimensions of fundamental representations}\label{App:dim}
The detailed calculations of the dimensions below can be found in \cite[Chapter 13]{Car}. Let $d_k = \dim V_k$ be the dimension of the $k$-th fundamental representations. We denote the short roots by black nodes.
\begin{itemize}
\item Type $A_n$: 
\begin{center}
  \begin{tikzpicture}[scale=.4]
    \draw[xshift=0 cm,thick] (0 cm, 0) circle (.3 cm);
    \foreach \x in {1,...,5}
    \draw[xshift=\x cm,thick] (\x cm,0) circle (.3cm);
    \draw[dotted,thick] (8.3 cm,0) -- +(1.4 cm,0);
    \foreach \y in {0.15,...,3.15}
    \draw[xshift=\y cm,thick] (\y cm,0) -- +(1.4 cm,0);
    \foreach \z in {1,...,5}
    \node at (2*\z-2,-1) {$\z$};
\node at (10,-1){$n$};
  \end{tikzpicture}
\end{center}
$$d_k = \binom{n}{k}.$$
\item Type $B_n$: 
\begin{center}
  \begin{tikzpicture}[scale=.4]
    \draw[xshift=0 cm,thick,fill=black] (0 cm, 0) circle (.3 cm);
    \foreach \x in {1,...,5}
    \draw[xshift=\x cm,thick] (\x cm,0) circle (.3cm);
    \draw[dotted,thick] (8.3 cm,0) -- +(1.4 cm,0);
    \foreach \y in {1.15,...,3.15}
    \draw[xshift=\y cm,thick] (\y cm,0) -- +(1.4 cm,0);
    \draw[thick] (0.3 cm, .1 cm) -- +(1.4 cm,0);
    \draw[thick] (0.3 cm, -.1 cm) -- +(1.4 cm,0);
    \foreach \z in {1,...,5}
    \node at (2*\z-2,-1) {$\z$};
\node at (10,-1){$n$};
  \end{tikzpicture}
\end{center}
$$d_k = \case{2^n&k=1,\\\binom{2n+1}{n+k}& k\neq 1.}$$
\item Type $C_n$: 
\begin{center}
  \begin{tikzpicture}[scale=.4]
    \draw[xshift=0 cm,thick] (0 cm, 0) circle (.3 cm);
    \foreach \x in {1,...,5}
    \draw[xshift=\x cm,thick,fill=black] (\x cm,0) circle (.3cm);
    \draw[dotted,thick] (8.3 cm,0) -- +(1.4 cm,0);
    \foreach \y in {1.15,...,3.15}
    \draw[xshift=\y cm,thick] (\y cm,0) -- +(1.4 cm,0);
    \draw[thick] (0.3 cm, .1 cm) -- +(1.4 cm,0);
    \draw[thick] (0.3 cm, -.1 cm) -- +(1.4 cm,0);
    \foreach \z in {1,...,5}
    \node at (2*\z-2,-1) {$\z$};
\node at (10,-1){$n$};
  \end{tikzpicture}
\end{center}
$$d_k = \case{\binom{2n}{n+1-k}-\binom{2n}{n+1+k}& k\neq n,\\ 2n&k=n.}$$
\item Type $D_n$: 
\begin{center}
  \begin{tikzpicture}[scale=.4]
    \draw[xshift=0 cm,thick] (0 cm, 1) circle (.3 cm);
    \draw[xshift=0 cm,thick] (0 cm, -1) circle (.3 cm);
    \foreach \x in {1,...,5}
    \draw[xshift=\x cm,thick] (\x cm,0) circle (.3cm);
    \draw[dotted,thick] (8.3 cm,0) -- +(1.4 cm,0);
   \draw[xshift=0.25 cm] (0 cm,1) -- +(1.4 cm,-1);
   \draw[xshift=0.25 cm] (0 cm,-1) -- +(1.4 cm,1);   
 \foreach \y in {1.15,...,3.15}
    \draw[xshift=\y cm,thick] (\y cm,0) -- +(1.4 cm,0);
    \foreach \z in {2,...,5}
    \node at (2*\z-2,-1) {$\z$};
\node at (10,-1){$n-1$};
\node at (-1,-1){$0$};
\node at (-1,1){$1$};
  \end{tikzpicture}
\end{center}
$$d_k = \case{2^{n-1}&k=0,1,\\\binom{2n}{n+1+k}&i\neq 0,1.}$$
\item Type $E_6$:
\begin{center}
  \begin{tikzpicture}[scale=.4]
    \draw[xshift=0 cm,thick] (0 cm, 0) circle (.3 cm);
    \foreach \x in {1,...,4}
    \draw[xshift=\x cm,thick] (\x cm,0) circle (.3cm);
    \foreach \y in {0.15,...,3.15}
    \draw[xshift=\y cm,thick] (\y cm,0) -- +(1.4 cm,0);
    \foreach \z in {1,...,5}
    \node at (2*\z-2,1) {$\z$};
\draw[xshift=0 cm,thick] (4 cm, -2) circle (.3 cm);
  \draw[xshift=0 cm] (4 cm,-0.25) -- +(0 cm,-1.5);
\node at (4,-3){$0$};
  \end{tikzpicture}
\end{center}
$$(d_0,...,d_5) = (78,27,351,2925,351,27).$$
\item Type $E_7$:
\begin{center}
  \begin{tikzpicture}[scale=.4]
    \draw[xshift=0 cm,thick] (0 cm, 0) circle (.3 cm);
    \foreach \x in {1,...,5}
    \draw[xshift=\x cm,thick] (\x cm,0) circle (.3cm);
    \foreach \y in {0.15,...,4.15}
    \draw[xshift=\y cm,thick] (\y cm,0) -- +(1.4 cm,0);
    \foreach \z in {1,...,6}
    \node at (2*\z-2,1) {$\z$};
\draw[xshift=0 cm,thick] (4 cm, -2) circle (.3 cm);
  \draw[xshift=0 cm] (4 cm,-0.25) -- +(0 cm,-1.5);
\node at (4,-3){$0$};
  \end{tikzpicture}
\end{center}
$$(d_0,...,d_6) = (912,133,8645,365750,27664,1539,56).$$
\item Type $E_8$:
\begin{center}
  \begin{tikzpicture}[scale=.4]
    \draw[xshift=0 cm,thick] (0 cm, 0) circle (.3 cm);
    \foreach \x in {1,...,6}
    \draw[xshift=\x cm,thick] (\x cm,0) circle (.3cm);
    \foreach \y in {0.15,...,5.15}
    \draw[xshift=\y cm,thick] (\y cm,0) -- +(1.4 cm,0);
    \foreach \z in {1,...,7}
    \node at (2*\z-2,1) {$\z$};
\draw[xshift=0 cm,thick] (4 cm, -2) circle (.3 cm);
  \draw[xshift=0 cm] (4 cm,-0.25) -- +(0 cm,-1.5);
\node at (4,-3){$0$};
  \end{tikzpicture}
\end{center}
$$(d_0,...,d_7) = (147250,3875,6696000,6899079264,146325270,2450240,30380,248).$$
\item Type $F_4$:
 \begin{center}
  \begin{tikzpicture}[scale=.4]
    \draw[thick] (-2 cm ,0) circle (.3 cm);
	\node at (-2,-1) {$1$};
    \draw[thick] (0 ,0) circle (.3 cm);
	\node at (0,-1) {$2$};
    \draw[thick,fill=black] (2 cm,0) circle (.3 cm);
	\node at (2,-1) {$3$};
    \draw[thick,fill=black] (4 cm,0) circle (.3 cm);
	\node at (4,-1) {$4$};
    \draw[thick] (15: 3mm) -- +(1.5 cm, 0);
    \draw[xshift=-2 cm,thick] (0: 3 mm) -- +(1.4 cm, 0);
    \draw[thick] (-15: 3 mm) -- +(1.5 cm, 0);
    \draw[xshift=2 cm,thick] (0: 3 mm) -- +(1.4 cm, 0);
  \end{tikzpicture}
\end{center}
$$(d_1,d_2,d_3,d_4)=(52,1274,273,26).$$
\item Type $G_2$: 
\begin{center}
  \begin{tikzpicture}[scale=.4]
    \draw[thick] (0 ,0) circle (.3 cm);
	\node at (0,-1) {$1$};
    \draw[thick,fill=black] (2 cm,0) circle (.3 cm);
	\node at (2,-1) {$2$};
    \draw[thick] (30: 3mm) -- +(1.5 cm, 0);
    \draw[thick] (0: 3 mm) -- +(1.5 cm, 0);
    \draw[thick] (-30: 3 mm) -- +(1.5 cm, 0);
  \end{tikzpicture}
\end{center}
$$(d_1,d_2) = (14,7).$$
\end{itemize}

\section{Boundary regions of central characters}\label{App:graph}
We list here the graphs of the boundary region $\cR$ of the image of $\Phi$ of the central characters in the rank 2 and 3 cases. In all cases we note that the region develops a cusp like singularity at the point $$\Phi(0,...,0) = (d_1,...,d_n),$$ where $d_k$ is the dimension of the fundamental representations. The grid lines in the graphs correspond to the parameters $\l_i\in[0,\infty)$.

\newpage
\begin{figure}[h!]
\centering
\begin{subfigure}{.5\textwidth}
\centering
\includegraphics[width=50mm]{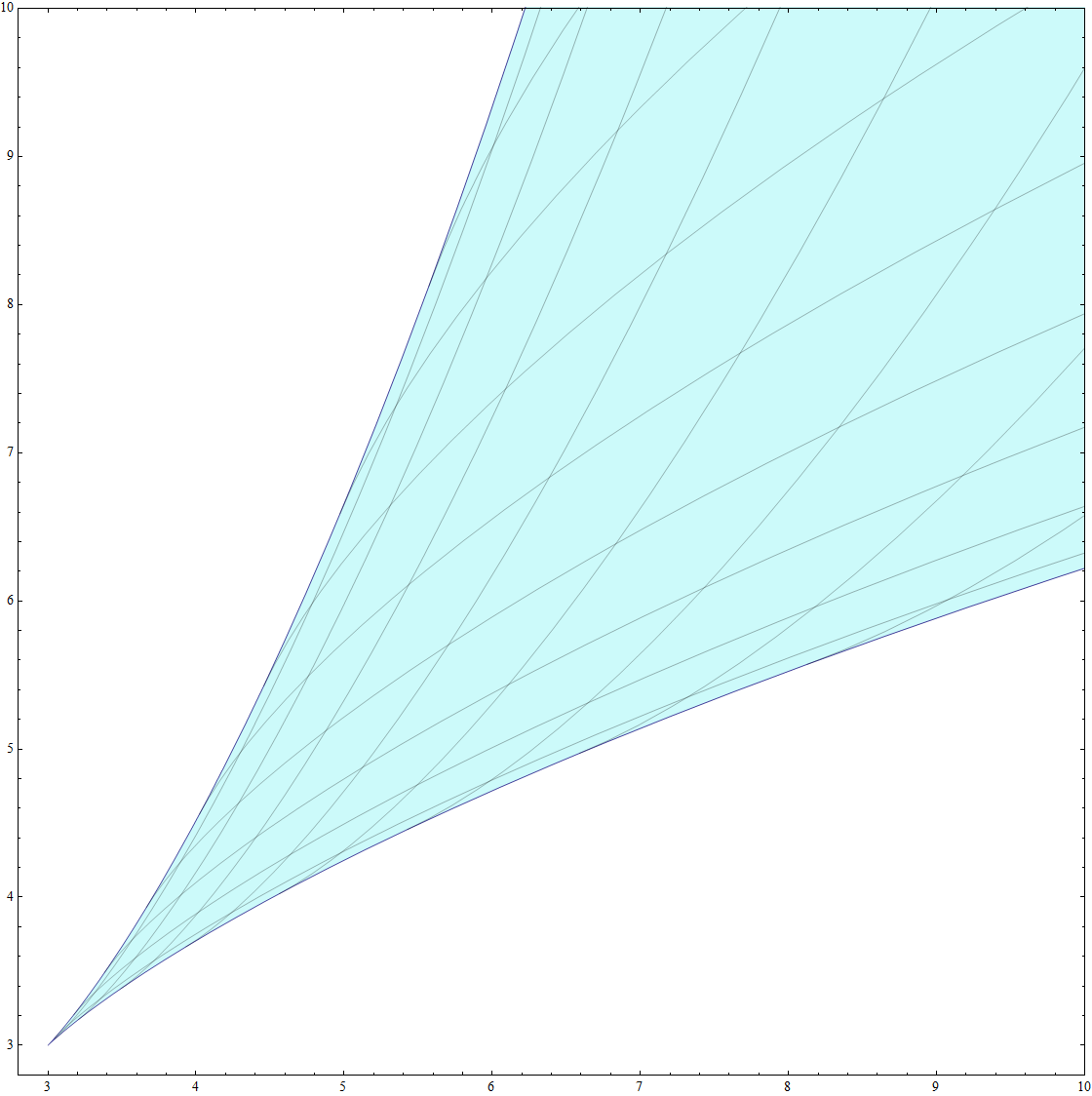}
\caption{Type $A_2$}
\label{A2}
\end{subfigure}%
\begin{subfigure}{.5\textwidth}
\centering
\includegraphics[width=50mm]{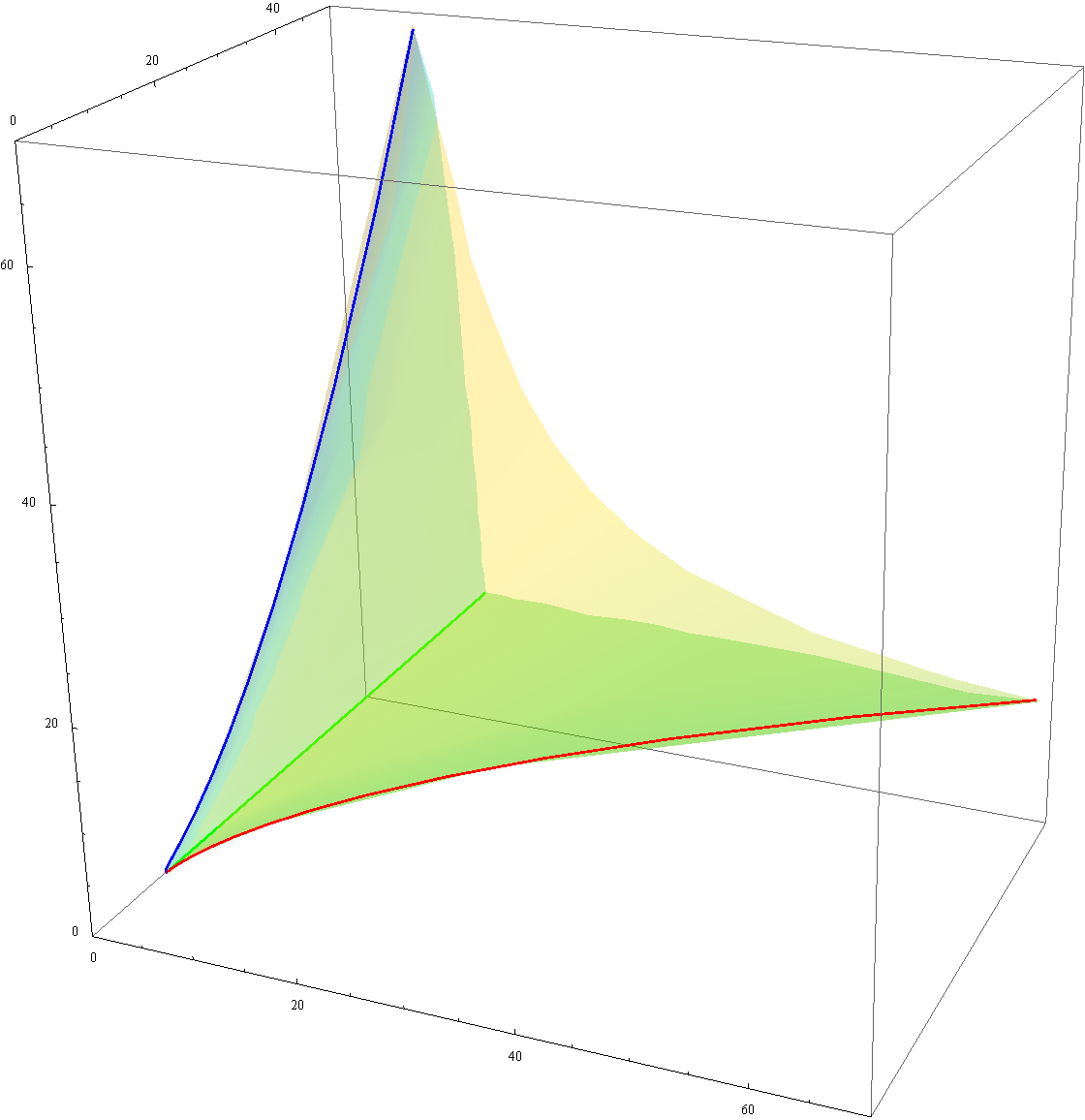}
\caption{Type $A_3$}
\label{A3}
\end{subfigure}
\begin{subfigure}{.5\textwidth}
\centering
\includegraphics[width=50mm]{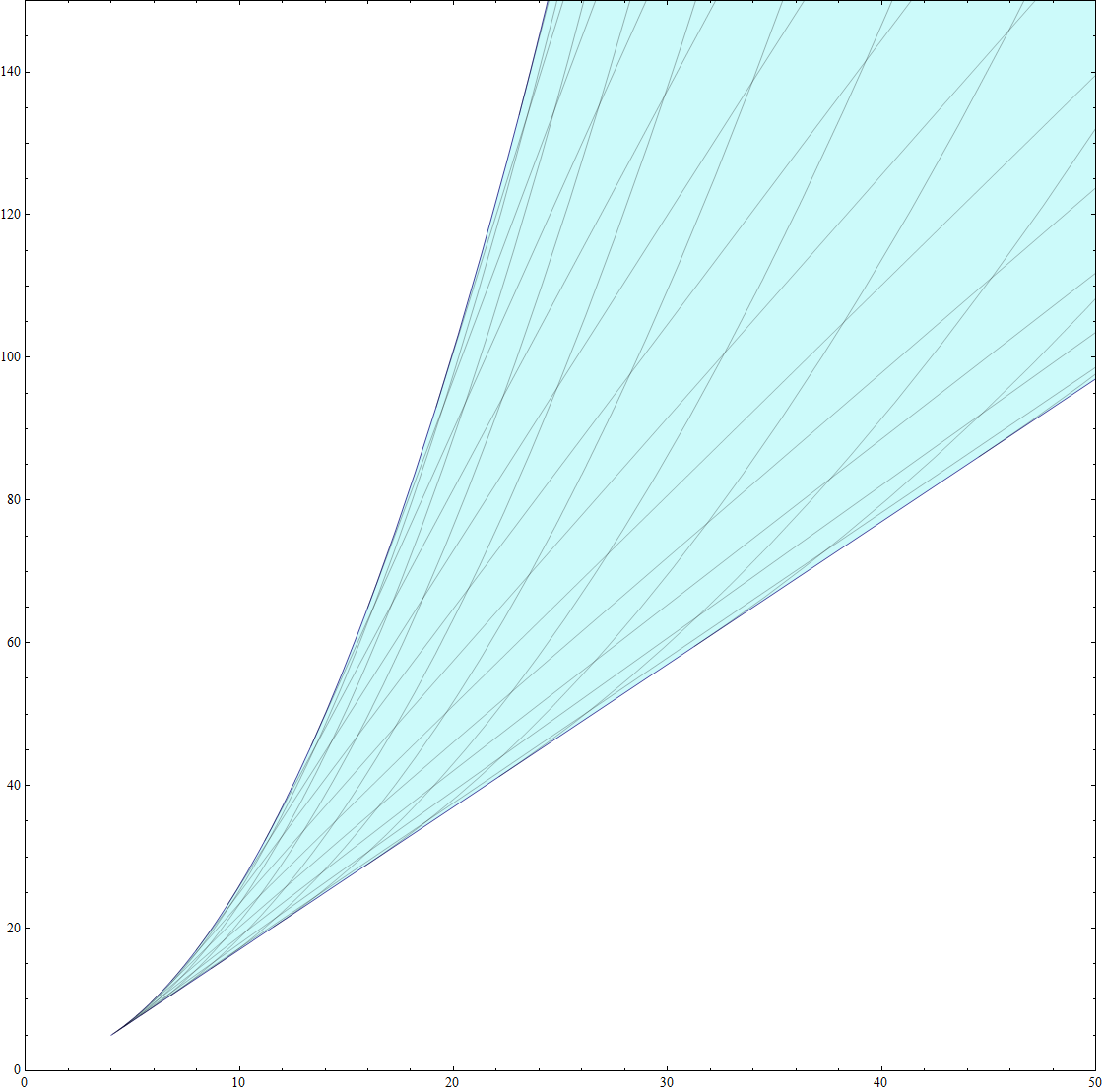}
\caption{ Type $B_2$}
\label{B2}
\end{subfigure}%
\begin{subfigure}{.5\textwidth}
\centering
\includegraphics[width=50mm]{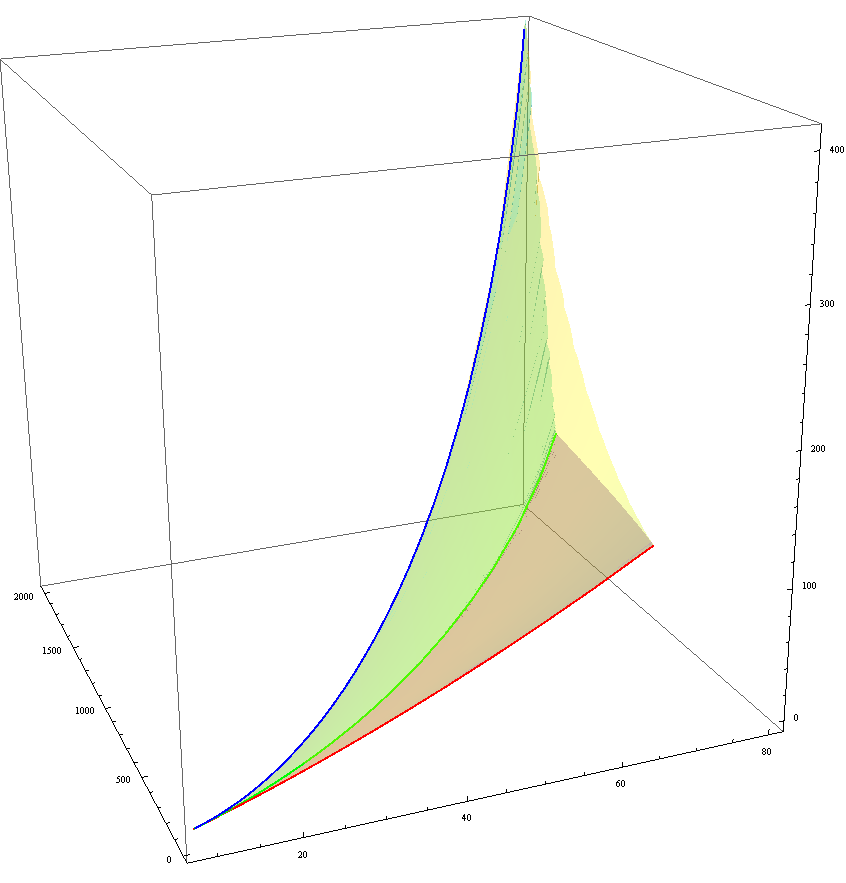}
\caption{Type $B_3$}
\label{B3}
\end{subfigure}
\begin{subfigure}{.5\textwidth}
\centering
\includegraphics[width=50mm]{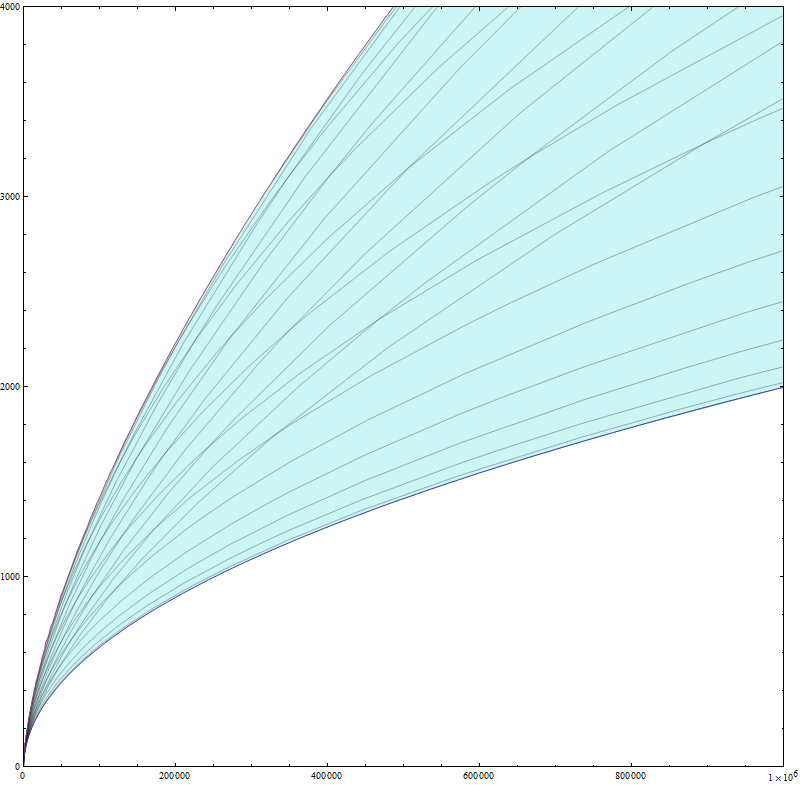}
\caption{Type $G_2$}
\label{G2}
\end{subfigure}%
\begin{subfigure}{.5\textwidth}
\centering
\includegraphics[width=50mm]{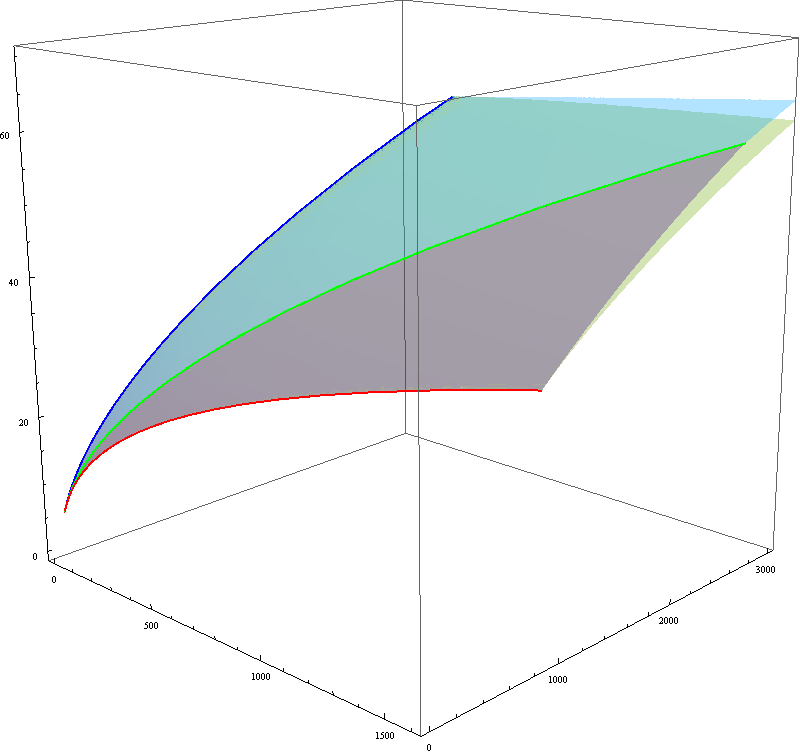}
\caption{Type $C_3$}
\label{C3}
\end{subfigure}

\caption{The boundary region $\cR:=\Phi(\R_{\geq0}^n)$ in rank $2$ and $3$.}
\end{figure}

\end{appendices}

\end{document}